\font\elevensc=cmcsc10 scaled\magstephalf
\font\teneu=eufm10 scaled\magstep1\font\seveneu=eufm7\font\fiveeu=eufm5
\def\eu{\fam\eufam\teneu}
\newcommand{\ha}[1]{{\hbox to#1pt{}}}
\newcommand{\hb}[1]{{\hbox to-#1pt{}}}
\newcommand{\itm}[2]{\begin{itemize}[leftmargin=#1pt]{#2}\end{itemize}}
\newcommand{\nix}{{\phantom|}}
\newcommand{\hla}{\hookleftarrow}
\newcommand{\hra}{\hookrightarrow}
\newcommand{\rsdp}{{\,\times\kern-3pt\lower-1pt%
\hbox{$\scriptscriptstyle|$\ha3}}}
\newcommand{\srjr}{\twoheadrightarrow}
\newcommand{\dwnr}[2]{{\ha{#1}\downarrow\hb2\lower-5pt\hbox{$\scriptstyle #2}}}
\newcommand{\dwnl}[2]{{\lower-5pt\hbox{${\scriptstyle #2}$}\hb2\downarrow\ha{#1}}}
\newcommand{\hhb}[1]{\hbox to#1pt{}}
\newcommand{\hor}[1]{\smash
       {\mathop{{\lgrghtar}}\limits^{\lower2pt\hbox{$\scriptstyle #1$}}}}
\newcommand{\horr}[1]{\ha2\smash
      {\mathop{{\lglgrghtar}}\limits^{\lower2pt\hbox{$\scriptstyle #1$}}}\ha2}
\newcommand{\mpst}[1]{\,\smash
       {\mathop{{\mapsto}}\limits^{\lower2pt\hbox{$\scriptstyle #1$}}}\,}
\newcommand{\mpstt}[1]{\,\smash
       {\mathop{{\longmapsto}}\limits^{\lower2pt\hbox{$\scriptstyle #1$}}}\,}
\newcommand{\lgrghtar}{{\ha2{\relbar\joinrel\rightarrow}\ha2}}
\newcommand{\lglgrghtar}{{\ha1{\relbar\joinrel\relbar\joinrel\rightarrow}\ha1}}
\newcommand{\nmnm}[1]{{\scalebox{.85}[1.05]{\elevensc #1}}}
\newcommand{\NMNM}[1]{{\scalebox{1}[1]{\elevensc #1}}}
\newcommand{\plim}[1]{\hbox to14pt{\rm
lim\kern-14pt\lower4.5pt\hbox{$\scriptstyle\longleftarrow$}%
\kern-8pt\lower8.5pt\hbox{$\scriptstyle{#1}$}}\ha{3}}
\newcommand{\ilim}[1]{\hboxto14pt{\rm
lim\kern-14pt\lower4.5pt\hbox{$\scriptstyle\longrightarrow$}%
\kern-8pt\lower8.5pt\hbox{$\scriptstyle{#1}$}}\ha{3}}
\newcommand{\defi}[1]{\textsf{#1}}
\newcommand{\clC}{{\mathcal C}}
\newcommand{\clD}{{\mathcal D}}
\newcommand{\clI}{{\mathcal I}}
\newcommand{\clJ}{{\mathcal J}}
\newcommand{\clK}{{\mathcal K}}
\newcommand{\clO}{{\mathcal O}}
\newcommand{\clP}{{\mathcal P}}
\newcommand{\clQ}{{\mathcal Q}}
\newcommand{\clT}{{\mathcal T}}
\newcommand{\clV}{{\mathcal V}}
\newcommand{\clW}{{\mathcal W}}
\newcommand{\clX}{{\mathcal X}}
\newcommand{\clY}{{\mathcal Y}}
\newcommand{\clZ}{{\mathcal Z}}
\newcommand{\eua}{{\eu a}}
\newcommand{\eum}{{\eu m}}
\newcommand{\eut}{{\eu t}}
\newcommand{\lvA}{{\mathbb A}}
\newcommand{\lvC}{{\mathbb C}}
\newcommand{\lvF}{{\mathbb F}}
\newcommand{\lvP}{{\mathbb P}}
\newcommand{\lvQ}{{\mathbb Q}}
\newcommand{\lvR}{{\mathbb R}}
\newcommand{\lvZ}{{\mathbb Z}}
\DeclareMathOperator{\chr}{char}
\DeclareMathOperator{\codim}{codim}
\DeclareMathOperator{\Max}{Max}
\DeclareMathOperator{\Quot}{Quot}
\DeclareMathOperator{\res}{res}
\DeclareMathOperator{\Spec}{Spec}
\DeclareMathOperator{\td}{td}
\newcommand{\abs}[1]{#1^{^{\rm abs}}}
\newcommand{\Br}{{\rm Br}}
\newcommand{\bma}{\bm a\hb{7.5}\bm a}
\newcommand{\bmam}{\bm a\hb{5.25}\bm a}
\newcommand{\bmb}{\bm b\hb{6.5}\bm b}
\newcommand{\bmbm}{\bm b\hb{4}\bm b}
\newcommand{\bmdl}{{\bm\delta\hb{6.5}\bm\delta}}
\newcommand{\bmdlm}{{\bm\delta\hb{4.5}\bm\delta}}
\newcommand{\bmfm}{\bm f\hb{5.25}\bm f}
\newcommand{\bmSgodudlm}{\bm\Sigma\hb{10}\bm\Sigma_{\bmdlm}}
\newcommand{\bmt}{\bm t\hb{5}\bm t}
\newcommand{\bmtI}{\bm t\hb{5.5}\bm t_I}
\newcommand{\bmim}{{\bm\imath\hb{3.5}\bm\imath}}
\newcommand{\bmtm}{\bm t\hb{3.25}\bm t}
\newcommand{\bmu}{\bm u\hb{8}\bm u}
\newcommand{\bmum}{\bm u\hb{5.5}\bm u}
\newcommand{\bmx}{\bm x\hb{8}\bm x}
\newcommand{\cpm}{\hbox{$\ha1\scriptstyle\cup\ha2$}}
\newcommand{\cppr}[2]{#1\cpm\hb1\dts\cpm#2}
\newcommand{\DD}{{D}}
\newcommand{\DDf}{D_{\!f}}
\newcommand{\dts}{\hb1.\ha1.\ha1.\ha1}
\newcommand{\fbmt}{f\!,\bmt}
\newcommand{\fK}{{\!f_{\!K}}}
\newcommand{\ft}{{f\!,\ha{.5}\bmtm}}
\newcommand{\HHx}[3]{{\rm H}^{#1}\big(#2,\lvZ/n(#3)\big)}
\newcommand{\HH}{{\rm H}}
\newcommand{\II}[1]{I_{#1}}
\newcommand{\istp}[1]{\bm\vartheta_{\hb1#1}}
\newcommand{\JJ}[1]{J_{#1}}
\newcommand{\KC}{{\hbox{\rm\defi{(KC)}}}}
\newcommand{\Kv}{K\hb{1}v}
\newcommand{\Kvx}{K_{\veps}}
\newcommand{\kIx}[1]{k_{\scriptscriptstyle I_{#1}}}
\newcommand{\kov}{k_{0v}}
\newcommand{\ko}{k_{\scriptscriptstyle0}}
\newcommand{\koveps}{k_{\scriptscriptstyle0_{\veps}}}
\newcommand{\lit}[1]{\hbox to32pt{[#1]\hss}}
\newcommand{\lo}{l_{\scriptscriptstyle0}}
\newcommand{\lps}[1]{(\hb{1.5}(#1)\hb{1.5})}
\newcommand{\lvAt}{\lvA_{\bmtm}}
\newcommand{\lvAti}{\lvA_{\bmtm_i}}
\newcommand{\lvAi}{\lvA_{\ti}}
\newcommand{\lvAtiu}{\lvA_{\bmtm_{i+1}}}
\newcommand{\lvPIx}[1]{\lvP_{\!\scriptscriptstyle I_{#1}}}
\newcommand{\lvPf}{\lvP_{\!f}}
\newcommand{\lvPi}{\lvP_\ti}
\newcommand{\lvPft}{\lvP_{\!f\hb1,\ha1\bmtm}}
\newcommand{\lvPfU}[1]{\lvP_{\!f\!,\ha1U_{#1}}}
\newcommand{\lvPU}[1]{\lvP_{U_{#1}}}
\newcommand{\lvPt}{\lvP_{\bmtm}}
\newcommand{\lvPtp}{\lvP_{\bmtm'}}
\newcommand{\mic}[1]{{\scriptscriptstyle{\rm #1}}}
\newcommand{\nxx}{{\phantom.}}
\newcommand{\nuo}{{\nu-1}}
\newcommand{\oli}{\overline}
\newcommand{\Ro}{R_{\scriptscriptstyle0}}
\newcommand{\So}{S_{\scriptscriptstyle0}}
\newcommand{\Sod}{\Sigma_{\ha1\delta}}
\newcommand{\Th}{{\mathfrak{Th}}}
\newcommand{\Tft}{{\Theta_\ft}}
\newcommand{\Tfto}{{\thbar{\ha{1.5}\Theta}\hb2_\ft}}
\newcommand\thbar[1]{\accentset{\rule{.6em}
{1pt}}{#1^{\phantom,}\hb2}\ha1}\newcommand{\ti}{{t_i}}
\newcommand{\tl}[1]{\tilde{#1}}
\newcommand{\tlC}{{\tilde C}}  
\newcommand{\tlK}{{\tilde K}}  
\newcommand{\tlL}{{\tilde L}}  
\newcommand{\tlS}{{\tilde S}}  
\newcommand{\tlSo}{\tilde S_{\scriptscriptstyle0}}  
\newcommand{\tlUo}{\tilde U_{\scriptscriptstyle0}}  
\newcommand{\tlV}{{\tilde V}}  
\newcommand{\tlclX}{{\tilde{\clX\ha2}\!}}  
\newcommand{\tlclY}{{\tilde\clY\ha2}\!}  
\newcommand{\tlX}{{\tilde X}}  
\newcommand{\tlY}{{\tilde Y}}  
\newcommand{\tlZ}{{\tilde Z}}  
\newcommand{\tlk}{{\tilde k}}  
\newcommand{\tlko}{\tilde k_{\scriptscriptstyle0}}
\newcommand{\tlw}{{\tilde w}}  
\newcommand{\tlx}{{\tilde x}}
\newcommand{\tms}{^{\hb1\times}\!}
\newcommand{\tx}{t}
\newcommand{\ttt}{{\bm t}}
\newcommand{\Uaf}{{U_{\scriptscriptstyle0},\bmam,f}}
\newcommand{\Uabf}{{U_{\scriptscriptstyle0},\bmam,\bmbm,f}}
\newcommand{\UIx}[1]{U_{\hb1\scriptscriptstyle I_{#1}}}
\newcommand{\Unu}{U_{\hb1\nu}}
\newcommand{\Uo}{U_{\scriptscriptstyle0}}
\newcommand{\Ut}{U_{\bmtm}}
\newcommand{\Uu}[1]{U_{\!#1}}
\newcommand{\uo}{u_{\scriptscriptstyle0}}
\newcommand{\uop}{u'_{\scriptscriptstyle0}}
\newcommand{\Val}[1]{V_{#1}}
\newcommand{\Vu}[1]{V_{#1}}
\newcommand{\veps}{\varepsilon}
\newcommand{\vk}{v}
\newcommand{\vo}{{v}}
\newcommand{\voeps}{{v_{\scriptscriptstyle0_{\veps}}}}
\newcommand{\val}{\scalebox{.85}[1]{{\textbf{\textsf{val}}}}}
\newcommand{\vid}{{\lower-1pt\hbox{/}\kern-7pt{\hbox{\rm O}}}}
\newcommand{\wh}[1]{{\widehat #1}}
\newcommand{\xe}{x}
\newcommand{\xx}{x}
\theoremstyle{plain}
\newtheorem{theorem}{Theorem}[section]
\newtheorem*{maintheo*}{Main Theorem}
\newtheorem{corollary}[theorem]{Corollary}
\newtheorem{keylemma}[theorem]{Key Lemma}
\newtheorem{keylemmarev}[theorem]{Key Lemma (revisited)}
\newtheorem{lemma}[theorem]{Lemma}
\newtheorem{proposition}[theorem]{Proposition}
\theoremstyle{definition}
\newtheorem{example/fact}[theorem]{Example/Fact}
\newtheorem{hypot}[theorem]{Hypothesis}
\newtheorem{fact/definition}[theorem]{Fact/Definition}
\newtheorem{notations/remarks}[theorem]{Notations/Remarks}
\newtheorem{definitions/notations}[theorem]{Definitions/Notations}
\newtheorem{notations/facts}[theorem]{Notations/Facts}
\newtheorem{recipe}[theorem]{Recipe}
\newtheorem{remark/definition}[theorem]{Remark/Definition}
\newtheorem{definition/remarks}[theorem]{Definition/Remarks}
\begin{document}

\title[Characterizing f.g.\ fields by a sentence] 
{\hfill $\scriptstyle\mathfrak{Peter\ha3Roquette\ha3zu\ha3
       seinem\ha3 90.\ha2Geburtstag\ha3gewidmet}$
\vskip15pt
{\scalebox{.95}[1]{Distinguishing every finitely 
generated field of} \\
characteristic $\neq2$ by a single field axiom$\ha2^{\bm\dag}$
\vskip5pt
{\scalebox{.85}[1]{\tiny\rm The\ha4strong\ha4Elementary%
\ha4equivalence\ha4vs\ha4Isomorphy\ha4Problem\ha4in\ha4%
Characteristic\ha4$\neq2$}}}}
%
%
%
%
%
%
\author{Florian Pop}

\address{Department of Mathematics, University of Pennsylvania
        \vskip0pt
        DRL, 209 S 33rd Street, Phila\-delphia, PA 19104, USA}
\email{pop@math.upenn.edu}
\urladdr{http://math.penn.edu/\~{}pop}

%
%
%
%
\begin{abstract}
We show that the isomorphy type of every finitely 
generated field $K$ with $\chr(K)\!\neq2$ is encoded 
by a \textit{\textbf{single\ha3explicit\ha3axiom}} 
$\istp K\!$ \textit{\textbf{in\ha3the\ha3language\ha3of\ha3fields}}, 
i.e., for all finitely generated 
fields $L$ one has: $\istp K$ holds in $L$ if and only 
if $K\cong L$ as fields. This extends earlier results 
by \scalebox{.8}[1]{\sc Julia Robinson, Rumely, 
Poonen, Scanlon}, the author, and others. 
\end{abstract}

\subjclass[2010]{Primary 11G30, 14H25;
Secondary 03C62, 11G99, 12G10, 12G20, 12L12,
13F30}

\keywords{Elementary equivalence vs 
isomorphism, first-order definability, e.g. of valuations,
finitely generated fields, Milnor K-groups, Galois/\'etale 
cohomology, Kato's higher local-global Principles
\vskip2pt
$\hb2^\dag\,${\bf Note}: This manuscript is a revised version 
of~\scalebox{.8}[1]{\sc Pop}~[P5]. The main 
result of this manuscript was also announced by
\scalebox{.8}[1]{\sc Dittmann}~[Di] (using 
the same technical tools, but expending rather 
on \scalebox{.8}[1]{\sc Poonen}~[Po],
\scalebox{.8}[1]{\sc Pop}~[P4]).
}

%
%
\maketitle

%
\vskip-2\baselineskip
\section{Introduction}
%
%
%
%
%
%
We begin by recalling that a \defi{sentence}, or an
\defi{axiom} in the language of fields is any formula 
in the language of fields which has not free variables.
One denotes by $\Th(K)$ the set of all the sentences
in the language of fields which hold in a given field 
$K$. For instance, by mere definitions, the field 
axioms are part of $\Th(K)$ for every field $K$; 
further the fact that $K$ is algebraically closed, as 
well as $\chr(K)$ are encoded in $\Th(K)$. Namely,
$K$ is algebraically closed \ iff \ $K$ satisfies the
\defi{scheme of axioms of algebraically closed fields}
(asserting that every non-zero polynomial $p(T)$ 
over $K$ has a root in $K$); respectively one has
$\chr(K)=p\geqslant0$ \ iff \ $K$ satisfies the 
\defi{char$\,\,=p$ scheme of axioms} (asserting: 
$\chr=p>0$ \ iff \ $\Sigma_{\mic{i=1}}^{^p}1=0$, 
respectively $\chr=0$ \ iff \ $\Sigma_{\mic{i=1}}^{^n} 
1\neq0$ for all $n$). On the other hand, if $K:=\lvQ(t)$ 
is the rational function field in the variable $t$ 
over $\lvQ$, then the usual way to say that $t$ 
is transcendental over $\lvQ$, namely 
``\ha1{\it $p(t)\neq0$ for all non-zero polynomials 
$p(T)$ over $\lvQ\,$}'' is not a scheme of axioms 
in the language of fields (because $t$ is not part 
of the language of fields). 
\vskip2pt
Two fundamental general type results in algebra are 
the following:
\vskip2pt
\itm{15}{
\item[-] Algebraically closed fields $K,L$ have 
$\Th(K)=\Th(L)$ \ iff \ $\chr(K)=\chr(L)$.
\vskip2pt
\item[-] Arbitrary fields $K,L$ have $\Th(K)=\Th(L)$
\ iff \ there are isomorphic ultra-powers 
$^{^*}\hb3K\cong\ha0^{^*}\hb3L$.
}

\vskip2pt
\noindent
Restricting to fields which are at the center of 
(birational) arithmetic geometry, namely the 
\defi{finitely generated fields} $K$, which are 
the function fields of integral schemes of finite 
type, the~elementary theory $\Th(K)$ is both 
extremely rich and mysterious. The so called
{\it Elementary equivalence vs Isomorphism
Problem\/} (\ha1EEIP\ha1) is about five decades 
old, and asks whether $\Th(K)$ encodes the isomorphy
type of $K$ in the class of all the finitely generated
fields; or equivalently, whether there exists a 
system of axioms in the language of fields~which 
characterizes $K$ among all the finitely generated 
fields. On the other hand, building on \nmnm{Julia} 
\nmnm{Robinson} [Ro1], [Ro2] methods and ideas, 
\nmnm{Rumely}~[Ru] showed at the end of 
the 1970's that for every global field $K$ there 
exists a sentence $\istp K^{\mic{Ru}}$ which 
characterizes the isomorphy type of $K$ as a 
global field, i.e., if $L$ is a global field, then 
$\istp K^{\mic{Ru}}$ holds in $L$ iff $K\cong L$ 
as fields. In other words, the isomorphy type of 
$K$ as a global field is characterized by a 
\textit{single\ha3explicit\ha3axiom 
$\istp K^{\mic{Ru}}$ in the language of fields}.
This goes far beyond the EEIP in the class of global fields!
\vskip2pt
Arguably, it is the \textit{\textbf{main open 
question}\/} in the elementary (or first-order) 
theory of finitely generated fields whether a 
fact similar to \nmnm{Rumely}'s result~[Ru]
holds for all finitely generated fields $K$, 
namely whether there is a field axiom $\istp K$ 
which characterizes the isomorphy type of 
$K$ in the class of all finitely generated fields;
this question is also called the \defi{strong EEIP}. 
We notice that the (strong) EEIP is open in 
general; see~\nmnm{Pop}~[P2],~[P3], for more 
details and references on the EEPI both over
finitely generated fields and function fields over 
algebraically closed base fields. A first attempt 
towards tackling the strong EEIP was
\nmnm{Scanlon}~[Sc], and that reduces the strong EEIP 
for each $K$ to first-order defining ``sufficiently 
many'' divisorial valuations of $K$.\footnote{\ha1See
the discussion below for more about this.}
Finally, \nmnm{Pop}~[P4] tackles the strong EEIP 
for finitely generated fields which are function fields 
of curves over global fields.
In the present note we generalize that result to all 
finitely generated fields of characteristic $\neq2$.
\begin{maintheo*}
For every finitely generated field $K$ with 
$\chr(K)\neq2$, there exists a sentence $\istp K$ in the 
language of fields such that for all finitely generated 
fields $L$ one has: 
\vskip4pt
\centerline{$\istp K$ holds in $L$ if and only if 
$\,L\cong K$ as fields.}
\end{maintheo*}
The Main Theorem above will be proved in Section~5. 
One can give three (by some standards similar) proofs. 
A first-proof follows simply from~\nmnm{Scanlon},
by invoking~Theorem~\ref{TheMThm} below for the 
definability of geometric prime divisors (thus circumventing 
the gap in the proof of defining divisorial valuations in 
Section~3 of loc.cit.). A second proof reduces the Main 
Theorem above to results by \nmnm{Aschenbrenner--%
Kh\'elif--Naziazeno--Scanlon}~[AKNS], by showing 
that finitely generated integrally closed subdomains 
in finitely generated fields of characteristic $\neq2$ are 
uniformly first-order definable. Among other things, 
these proofs show that finitely generated fields of 
characteristic $\neq2$ are \textit{\textbf{bi-interpretable 
with arithmetic}}, see e.g.~[Sc], Section 2, and/or 
[AKNS],~Section~2, for a detailed discussion of 
bi-interpretablility with arithmetic. Third, 
a more direct proof based on \nmnm{Pop} [P2], 
\nmnm{Poonen}~[Po1], and consequences of
\nmnm{Rumely}~[Ru] (namely that the number 
fields are bi-interpretable with arithmetic).
\vskip2pt
The main step and technical key point in the proof 
of the Main Theorem is to give formulae $\val_d$,
all $d>0$, in the language of fields, which uniformly 
first-order define the \defi{geometric prime divisors} 
of finitely generated fields $K$ with $\chr(K)\neq2$ 
and $\dim(K)=d$. That is the content of 
Theorem~\ref{TheMThm} below, which could 
be viewed as the main result of this note.
\vskip5pt
To make these assertions more precise, let us introduce 
notation and mention a few fundamental facts about 
finitely generated fields, to be used throughout the 
manuscript.
\vskip2pt
For arbitrary fields $\Omega$, let 
$\kappa_0\subset\Omega$ denote their prime fields.
Recall that the Kronecker~dimension of $\Omega$ is 
$\dim(\Omega)=\dim(\kappa_0)+\td(\Omega|\kappa_0)$, 
where  $\td(\Omega|\kappa_0)$ 
denotes the transcendence degree, and $\dim(\lvF_p)=0$, 
$\dim(\lvQ)=1$. We denote by $\kappa:=\abs\Omega$ 
the \defi{constant subfield} of~$\Omega$, i.e., 
the elements of $\Omega$ which are algebraic over 
the prime field $\kappa_0\subset\Omega$, and set 
$
\tl\Omega:=\Omega[\sqrt{-1}\ha2].
$
For $\bma:=(a_1,\dts,a_r)$ with $a_i\in\Omega^\times$ 
we consider the $r$-fold Pfister form\footnote{\ha2See 
e.g.\ \nmnm{\footnotesize\sc Pfister}~[Pf1], Ch.\ha{2}2, 
for basic facts.} $q_{\bmam}(\bm x)$ in the variables 
$\bm x=(x_1,\dts,x_{2^r})$ and for field 
extensions $\Omega'|\Omega$ define the \defi{image 
of $\Omega'$ under $q_{\bmam}$} as being
\[
q_{\bmam}(\Omega'):=\{q_{\bmam}(\bm x')\mid
\bm x'\in{\Omega'}^{2^r}\!\!,\,\bm x'\neq\bm0\}.
\]
Next we recall that, 
using among other things the 
Milnor Conjecture,\footnote{\ha2Proved by 
\nmnm{\footnotesize\sc Vojevodsky, Orlov--Vishik--Vojevodsky}, 
and \nmnm{\footnotesize\sc Rost}, see e.g.\ the survey 
articles [Pf2], [Kh].} by \nmnm{Pop}~[P2] there are 
sentences $\varphi_d$, and by \nmnm{Poonen}~[Po1]  
there is a predicate $\abs\psi(\xx)$, and formulas 
$\psi_r(\ttt)$ with free variables~$\ttt:=(\tx_1,\dts,\tx_r)$ 
such that for all finitely generated fields $K$ and 
$\kappa=\abs K\subset K$, setting $\tlK:=K[\sqrt{-1}]$, 
one~has:
\vskip5pt
\itm{20}{
\item[-] $\dim(K)=d\,$ iff $\,\varphi_d$ holds in $K$.
Actually, $\varphi_d\equiv\big((\varphi^0_d\wedge2=0)
\vee(\varphi^0_{d+1}\hb1\wedge2\neq0)\big)$, where
\[
\varphi^0_r \, \equiv \, 
\big(\,\exists\,\bma\hb1=\hb1(a_1,\dts,a_r) \ 
\scalebox{.8}[1]{s.t.} \ 0\not\in q_{\bmam}(\tlK)\big)\,\&\, 
\big(\,\forall\,\bma\!=\!(a_1,\dts,a_{r+1}) \ 
\scalebox{.8}[1]{one\ has} \ha5 0\in q_{\bmam}(\tlK)\big).
\]
\item[-] $\kappa$ is defined by $\abs\psi(\xx)$ inside
$K$, i.e., one has \ $\kappa=\{x\in K\mid\abs\psi(x) 
           \hbox{ holds in } K\}$.
\vskip2pt
\item[-] $t_1,\dts,t_r\in K$ are algebraically 
independent over $\kappa$ \ iff \ $\psi_r(t_1,\dts,t_r)$ 
holds in $K$.
}
In particular, for algebraically independent elements 
$\bmt_r:=(t_1,\dots,t_r)$ of~$K$, the relative algebraic 
closure $k_{\bmtm_r}$ of $\kappa(\bmt_r)$ in $K$ 
is {\it uniformly first-order definable,\/} hence so are 
the maximal global subfields $\ko\subset K$ of $K$,
as well as the transcendence bases $\clT:=(t_1,\dots,t_{d_K})$ 
of $K|\kappa$.
\vskip2pt
A \defi{prime divisor of $K$} is (the 
valuation ring of) a valuation $v$  
whose residue field $\Kv$~satisfies 
\[
\dim(\Kv)=\dim(K)-1.
\] 
It turns out that prime divisors $v$ of finitely 
generated fields are discrete valuations, and 
$\Kv$ is a finitely generated fields as well. 
A prime divisor $v$ of $K$ is called
\defi{arithmetic}, if $v$ is non-trivial on 
$\kappa=\abs K$ ---\ha1in particular $\kappa$ 
must be a number field, respectively 
\defi{geometric}, if $v$ is trivial on $\kappa$. 
Recall that \nmnm{Rumely} [Ru] gives 
formulae $\val_1$ which uniformly first-order 
define the prime divisors~of~global fields, and 
\nmnm{Pop}~[P4] gives formulae $\val_2$ which 
uniformly first-order define the {\it geometric 
prime divisors\/} in the case $\dim(K)=2$. The 
focus of this note is to give similar formulae 
$\val_d$ for fields \lower1pt\hbox{\large$\bullet$} 
satisfying:

\vbox{
\vskip8pt
\centerline{{\bf (H)} \ha{50}\lower1pt
\hbox{\large$\bullet$} \ is finitely generated, \ \ \ 
$d:=\dim(\lower1pt\hbox{\large$\bullet$})>2$, \ \ \ 
$\chr(\lower1pt\hbox{\large$\bullet$})\neq2$\ha{70}}
\begin{theorem}
\label{TheMThm}
There is an explicit procedure that, given an integer 
$d>1$, produces a first-order formula $\,\val_d\,$ 
that in any finitely generated field $K$ of characteristic
$\chr(K)\neq2$ and Kronecker dimension $\dim(K)=d$ 
defines all the geometric prime divisors of $K$.
\end{theorem}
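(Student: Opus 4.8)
We may assume $d>2$, since $d=1$ is \nmnm{Rumely}~[Ru] and $d=2$ is \nmnm{Pop}~[P4]. The plan is then to reduce the problem, by a slicing argument together with an induction on $d$, to the case $d=2$. Throughout I use the uniformly definable ``infrastructure'' recalled above the statement: the sentences $\varphi_d$ fixing the Kronecker dimension, \nmnm{Poonen}'s predicate $\abs\psi$ and the formulas $\psi_r$ defining the constant subfield $\kappa=\abs K$ and algebraic independence over $\kappa$, and hence the uniform definability of relative algebraic closures $k_{\bmtm_r}$, of transcendence bases $\bm t$ of $K|\kappa$, and of the maximal global subfields $\ko\subset K$. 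Any field $\Omega$ occurring may be replaced by $\tl\Omega=\Omega[\sqrt{-1}\,]$: by the Milnor Conjecture the mod-$2$ Galois cohomology of $\tl\Omega$ is its graded Witt ring, so every statement about cup products of mod-$2$ symbols, and about their residues along valuations, becomes a statement about isotropy of, or values represented by, Pfister forms $q_{\bm a}$, and is therefore first-order via the sets $q_{\bm a}(\tl\Omega')$ introduced above. For bookkeeping, $\cd_2(\tl\Omega)=\dim(\Omega)+1$ for finitely generated $\Omega$, so $\cd_2(\tlK)=d+1$ while $\cd_2(\widetilde{Kv})=d$ for every prime divisor $v$ of $K$.

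Next I set up a definable parameter space. A \emph{datum} is a tuple $\Delta=(\bm t,\pi,\bm u)$ with $\bm t$ a transcendence basis of $K|\kappa$, $\pi\in K^\times$, and $\bm u=(u_1,\dts,u_d)\in(K^\times)^d$; it is called \emph{admissible} if a first-order condition $\mathrm{Adm}_d(\Delta)$ holds, whose main clauses are: the $(d{+}1)$-fold Pfister form $\langle\langle\pi,u_1,\dts,u_d\rangle\rangle$ is anisotropic over $\tlK$, i.e.\ $0\notin q_{(\pi,\bm u)}(\tlK)$; a non-degeneracy clause (again of the form ``$a\in q_{\bm a}(\tlK)$'') ensuring that the mod-$2$ symbol $\omega:=\{\pi\}\cup\{u_1\}\cup\dts\cup\{u_d\}$ ramifies along a \emph{single} prime divisor; and a clause, phrased via $\abs\psi$, forcing that prime divisor to be trivial on $\kappa$, i.e.\ geometric. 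The theorem then reduces to the following \emph{Key Lemma}: there is an explicit formula $\val_d(x;\Delta)$, a boolean combination of conditions ``$a\in q_{\bm a}(\tlK)$'', such that \emph{(a)} for every admissible $\Delta$ the set $\{x\in K:\val_d(x;\Delta)\}$ is the valuation ring $\mathcal O_v$ of a geometric prime divisor $v=v_\Delta$ of $K$, and \emph{(b)} conversely every geometric prime divisor of $K$ equals some $v_\Delta$ with $\Delta$ admissible. Granting the Key Lemma, the formula $\val_d(x;\Delta)$ with its parameter tuple constrained by $\mathrm{Adm}_d$ is the desired uniform first-order definition of the family of geometric prime divisors, and the induction closes.

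For part \emph{(a)}: an admissible $\Delta$ makes $\omega$ a nonzero class in the top-degree group $\HH^{d+1}(\tlK,\lvZ/2)$, so by \nmnm{Kato}'s higher local--global principle --- the exactness of the Kato complex $\HH^{d+1}(\tlK)\to\bigoplus_w\HH^{d}(\widetilde{Kw})\to\cdots$, due to \nmnm{Kato} for $d\leq2$ and to \nmnm{Jannsen} and \nmnm{Kerz--Saito} in general --- $\omega$ has nonzero residue at some prime divisor; by the non-degeneracy clause this one is unique, call it $v=v_\Delta$, and since $\langle\langle\pi,\bm u\rangle\rangle$ is anisotropic over $\tlK$, Springer's theorem shows $\pi$ is a uniformizer of $v$ and $\partial_v\omega=\{\bar u_1\}\cup\dts\cup\{\bar u_d\}\neq0$ in $\HH^{d}(\widetilde{Kv})$; the $\abs\psi$-clause makes $v$ geometric. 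It remains to recognise $\mathcal O_v$ first-order. The criterion built into $\val_d$ extends the classical description of $\lvZ_p$ inside $\lvQ_p$ by a single quadratic condition: over the Henselization of $K$ at $v$, $v$-integrality of $x$ is controlled by whether an explicit element built from $x$, $\pi$ and $\bm u$ lies in the value set of a suitable Pfister form --- but a priori this is a condition on the \emph{Henselian} field, not on $K$. To make it first-order over $K$ I would (i) restate the Henselian condition as the vanishing of the residue $\partial_v$ of a mod-$2$ symbol; (ii) \emph{slice}: choose, definably, a $2$-dimensional subfield $E\subset K$ containing $\pi$, $\bm u$ and enough of the transcendence basis so that $v|_E$ is a geometric prime divisor of $E$ of the type treated in [P4], $\pi$ is a uniformizer of $v|_E$, and $K|E$ is finite along $v$ with bounded, generically tame ramification; and (iii) apply \nmnm{Pop}~[P4]'s formula $\val_2$ over $E$, then lift $\mathcal O_{v|_E}$ to $\mathcal O_v$ using this control of $K|E$ at $v$ together with Kato's principle over $E$ to convert the residue condition into a condition ``$a\in q_{\bm a}(E)$''. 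This transfer from a local (Henselian) condition to a first-order condition over the global field $K$, while ensuring that the test symbol detects only the single place $v$, is the main obstacle, and is the genuinely new input beyond the cases $d\leq2$.

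For part \emph{(b)}: given a geometric prime divisor $v$ of $K$, choose a uniformizer $\pi$ of $v$ and a transcendence basis $\bm t$ of $K|\kappa$ adapted to $v$ (so that the residues of a suitable sub-tuple of $\bm t$ form a transcendence basis of $\widetilde{Kv}|\kappa$; such bases exist by elementary valuation theory). Since $\widetilde{Kv}$ is finitely generated with $\cd_2=d$, its top cohomology $\HH^{d}(\widetilde{Kv},\lvZ/2)$ is nonzero --- a standard Faddeev-type computation along that basis --- so one may pick $\bm u\in(\mathcal O_v^\times)^d$ with $\{\bar u_1\}\cup\dts\cup\{\bar u_d\}\neq0$; then $\langle\langle\pi,u_1,\dts,u_d\rangle\rangle$ is anisotropic over $\tlK$ by Springer's theorem, and after a small perturbation of the $u_i$ the non-degeneracy and $\kappa$-triviality clauses hold as well, so $\Delta=(\bm t,\pi,\bm u)$ is admissible with $v_\Delta=v$. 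Finally, the $\abs\psi$-clause in $\mathrm{Adm}_d$ keeps the arithmetic prime divisors out of the family, and the discreteness of prime divisors of finitely generated fields is exactly what lets the single defining relation for $\mathcal O_v$ capture it in one step, with no infinite disjunction over powers of $\pi$.
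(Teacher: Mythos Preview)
Your proposal identifies the right toolkit (Milnor/Voevodsky to turn mod-$2$ symbols into Pfister-form conditions, and the Kato/Jannsen/Kerz--Saito local--global principles to control a top-degree class by its residues), but two load-bearing steps are not carried out and, as written, do not work.

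First, the ``non-degeneracy clause''. You assert there is a condition of the shape ``$a\in q_{\bm a}(\tilde K)$'' forcing $\omega$ to ramify at a \emph{single} prime divisor, and a further $\abs\psi$-clause making that divisor geometric. No such clause is produced, and I do not see how one could be: knowing at which places a top class ramifies is essentially knowing the valuations, which is the very thing you are trying to define. The paper does not attempt this. Instead it engineers the symbol so that its possible ramification is confined to the support of a chosen function $f$: one views $K$ as $k(C)$ with $k=k_0(\bm t)$, $k_0$ a maximal global subfield, the $t_i$ chosen to be squares in $K$, and one considers symbols $u_0'\cup u_0\cup(t_1-a_1)\cup\cdots\cup(t_e-a_e)\cup f$ with $(u_0',u_0)$ and the $a_i$ ranging in definable arithmetic subsets of $k_0$. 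The Key Lemma shows some such symbol is nonzero iff there exists $P\in C$ with $v_P(f)$ odd; the hard direction combines Jannsen with Kerz--Saito and an induction on $e$ through prime-to-$2$ alterations. From this one defines, first-order, the set $\bigcup_{P\in D_f}U_P\cdot (K^\times)^2$ (by running the Key Lemma in $K[\sqrt{\varepsilon}\,]$), then the semi-local ring $\bigcap_{P\in D_f}\mathcal O_P$, and only \emph{then} isolates a single $\mathcal O_P$ by Riemann--Roch, picking $f$ with a unique zero of odd order. The ``single divisor'' condition thus comes from geometry on the curve $C$, not from a cohomological clause on $\omega$.

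Second, the ``slicing'' step is incoherent as stated. If $E\subset K$ has Kronecker dimension $2$ and $d=\dim(K)>2$, then $\td(K|E)=d-2>0$, so $K|E$ is not finite ``along $v$'' or in any other sense; the restriction $v|_E$ does not come close to determining $v$ (the extensions of $v|_E$ to $K$ form a space of positive dimension). Applying $\val_2$ to $E$ hands you $\mathcal O_{v|_E}$ inside $E$, but there is no mechanism to recover $\mathcal O_v$ inside $K$ from it. The paper's induction is of a different nature: it is an induction on $e=d-2$ at the level of the auxiliary symbols and their alterations (Lemma~\ref{basic-lemma} inside the proof of Proposition~\ref{prop2}), not a reduction of the definability problem to a $2$-dimensional subfield.
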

%
%
\noindent 
For the proof see~Section~4, Theorem~\ref{mainthm}, 
and Recipe~\ref{therecipe} for the concrete form 
of~$\,\val_d$. 
\vskip5pt
\noindent
We conclude the Introduction with the following
remarks. }
\vskip2pt
First, in the early version~[P5], the case of 
finitely generated fields of characteristic zero was 
considered\ha1/\ha1dealt with. The methods and
techniques of~[P5] are unchanged, 
except~a~key technical point of the procedure of 
giving $\val_d$, namely the old Proposition~3.5, 
whose new variant Proposition~3.4 below works
for all finitely generated fields satisfying Hypothesis~(H).
\vskip2pt
Second, although the formulae $\val_d$ are completely 
explicit, see Recipe~\ref{therecipe}, it is an open question 
whether these formulae are optimal in any concrete 
sense; in particular, the formulae $\val_d$ do not 
address the question about the \textbf{\textit{complexity 
of (uniform) definability}} of (some or all) the prime 
divisors. The complexity of definability of valuations 
deserves further special attention, because among other 
things it ties in with previous first-order definability 
results of valuations (of finitely generated fields 
and more general fields) by \nmnm{Eisentr\"ager}~[Ei],
\nmnm{Eisentr\"ager--Shlapentokh}~\hbox{[E-S]}, 
\nmnm{Kim-Roush}~[K-R], \nmnm{Koenigsmann}~[Ko1, Ko3],
\nmnm{Miller-Shlapentokh}~[M-Sh], \nmnm{Poonen}~[Po2],
\nmnm{Shlapen-} \nmnm{tokh} [Sh1], [Sh2], and 
others. The focus of the aforementioned results and 
research is yet another open problem in the theory 
of finitely generated fields and function fields, 
namely the generalized Hilbert Tenth Problem 
---\ha2which for the time being is open over all 
number fields, e.g.\ $\lvQ$, $\lvC(t)$, etc.
\vskip2pt
Third, it is strongly believed that the (strong) EEIP
should hold for the function fields $K|k$ over 
``reasonable'' base fields $k$; in particular, since 
finitely generated fields of characteristic $\neq2$
are nothing but function fields $K$ over prime
fields with $\chr\neq2$, the Main Theorem above 
asserts that $\lvQ$ and $\lvF_p$, $p\neq2$, are 
``reasonable.'' If $k$ is an algebraically closed field, facts  
proved by \nmnm{Durr\'e} [Du], \nmnm{Pierce}~[Pi], 
\nmnm{Vidaux}~[Vi] for $\td(K|k)=1$, respectively 
\nmnm{Pop}~[P2] for $\td(K|k)$ arbitrary, are quite 
convincing partial results supporting the possibility that
algebraically closed fields are ``reasonable.'' Finally, 
\nmnm{Koenigsmann}~[Ko2], \nmnm{Poonen-Pop}~[P-P] 
give evidence for the fact that the much more general
\defi{large fields $k$}, as introduced in \nmnm{Pop}~[P1], 
e.g.\ $k=\lvR,\lvQ_p$, PAC, etc., should be ``reasonable'' 
base fields. These partial/preliminary results over large 
fields (including the algebraically closed ones) do not 
involve prime divisors of $K|k$. Two fundamental 
open questions arise: First, is it possible to recover 
prime $k$-divisors of functions fields $K|k$ over 
large fields $k$, at least in the case of special classes 
of large fields, e.g.\ local fields, or quasi-finite fields? 
Second, are there alternative approaches (which do not 
involve prime divisors) for recovering the isomorphy 
type of $K$ from $\Th(K)$? 
\vskip7pt
{\setlength{\baselineskip}{.95\baselineskip}
{\footnotesize
\noindent
{\bf Thanks}: I would like to thank the 
participants at several activities, e.g., AWS 2003, AIM
Workshop 2004, INI Cambridge 2005, HIM Bonn 
2009, ALANT III in 2014, MFO Oberwolfach in 2016, 
IHP Paris in 2018, for the debates on the topic and 
suggestions concerning this problem. Special thanks 
are due to Bjorn Poonen, Thomas Scanlon, Jakob 
Stix, and Michael Temkin for discussing  technical 
aspects of the proofs, 
and to Uwe Jannsen and Moritz Kerz for discussions 
concerning Kato's higher dimensional Hasse local-global 
principles. The author would also like to thank the 
University of Heidelberg and the University of Bonn 
for the excellent working conditions during his visits 
in 2015 and 2016 as a {\it Humbodt Preistr\"ager.\/}}
\par}
%
%
\section{Higher dimensional Hasse local-global principles}
\noindent
A) {\it Notations and general facts\/}
\vskip5pt
For a (possibly trivial) valuation $v$ of $K$, let 
$\eum_v\subset\clO_v\subset K$ be its valuation
ideal and valuation ring ring, $vK:=K\tms/U_v$ 
be its (canonical) value group, and 
$K\hb1v:=\kappa(v):=\clO_v/\eum_v$ be its 
residues field. We denote by $\Val K$ the 
\defi{Riemann space} of $K$, i.e., the space of 
all the (equivalence classes of) valuations of $K$. 
\vskip2pt
Let $X$ be a scheme of finite type over either
$\lvZ$ or a field $k$. For $x\in X$, let 
$X_x:=\oli{\{x\}}\subset X$ be the closure of 
$x$ in $X$, and recall that $\dim(x):=\dim(X_x)$. 
Following \nmnm{Kato}~[Ka], define:
\[
X_i:=\{x\in X\mid\dim(x)=i\}, \quad
X^i:=\{x\in X\mid\codim(x)=i\},
\] 
and recall that if $X$ is projective normal integral, 
then for all $0\leqslant i\leqslant \dim(X)$ one has:
\[
\dim(X)=\codim(x)+\dim(x),
\quad\hbox{and therefore:} \ \
x\in X^i \ \Leftrightarrow \ x\in X_{\dim(X)-i}
\]
\begin{notations/remarks}
\label{nota-1}
Let $K$ be a finitely generated field, and 
$k\subset K$ be a subfield. 
\vskip2pt
\itm{25}{
\item[1)] A \defi{model} of $K$ is a separated 
scheme of finite type $\clX$ with function field 
$\kappa(\clX)=K$. And a \defi{$k$-model} of 
$K$ is a $k$-variety, i.e., a separated $k$-scheme 
of finite type, with $k(X)=K$. 
\vskip2pt
\item[2)] Let a model $\clX$ of $K$, and 
$v\in\Val K$ be given. We say that $v$ has 
\defi{center $x\in\clX$ on $\clX$},  if 
$\clO_x\prec\clO_v$, that is, $\clO_x\subseteq\clO_v$ 
and $\eum_x=\eum_v\cap\clO_x$. By the {\it valuation 
criterion\/} one has:  Since $\clX$ is separated, 
every $v\in\Val K$ has at most one center on $\clX$,
respectively: $\clX$ is proper iff every valuation 
$v\in\Val K$ has a center on $\clX$ (which is 
then unique). 
\vskip2pt
\item[3)] Let a $k$-model $X$ of $K$ and 
$v\in\Val K$ be given. We say that $x\in X$ 
is the center of $v$ on $X$, if $\clO_x\prec\clO_v$. 
If so, then $v\in \Val{K|k}$. By the {\it valuation 
criterion\/} one has: Since $X$ is 
separated over $k$, every $v$ has at most 
one center on $X$, respectively that $X$ is 
a proper $k$-variety iff every $k$-valuation 
$v\in\Val{K|k}$ has a center on $X$ (which
is then unique). 
\vskip2pt
\item[4)] A \defi{prime divisor} of $K$ is any
$v\in\Val K$ satisfying the following equivalent 
conditions:
\vskip2pt
\itm{20}{
\item[i)] $\dim(\Kv)=\dim(K)-1$.
\vskip2pt
\item[ii)] $v$ is discrete, and $\Kv$ is finitely 
generated and has $\dim(\Kv)=\dim(K)-1$.
\vskip2pt
\item[iii)] $v$ is defined by a prime Weil divisor of 
a projective normal model $\clX$ of $K$.
}
\vskip2pt
\item[5)] A \defi{prime $k$-divisor} of $K$ is 
any $v\in\Val{K|k}$ satisfying the following 
equivalent conditions:
\vskip2pt
\itm{20}{
\item[i)] $\td(\Kv\ha1|\ha1k)=\td(K|k)-1$.
\vskip2pt
\item[ii)] $v$ is a prime divisor of $K$
which is trivial on $k$.
\vskip2pt
\item[iii)] $v$ is defined by a prime Weil divisor of
a projective normal model $X$ of $K|k$. 
}
\vskip2pt
\item[6)]
Let $\clD^1_K\supset\clD^1_\clX$ be the spaces 
of prime divisors of $K$, respectively the ones 
defined by the prime Weil divisors of a quasi-projective
normal model $\clX$ of $K$. Further define 
$\clD^1_{K|k}\supset\clD^1_X$ correspondingly,
where $X$ is a quasi-projective normal $k$-model 
of $K$.
\vskip2pt
\item[7)] In the above notation, let $\clX$ and $X$ 
be projective. Then one has canonical identifications:
\vskip5pt
\centerline{$\clD^1_\clX\ \leftrightarrow\ \clX^1\,
=\,\clX_{\dim(K)-1}, \quad\quad \clD^1_X\ 
\leftrightarrow\ X^1\,=\,X_{\td(K|k)-1}$.}
}
\end{notations/remarks}
\vskip-5pt
\noindent
B) {\it Local-global principles\/} (LGP) 
\vskip5pt
Let us first recall the famous Hasse\ha1--Brauer\ha1--Noether 
LGP. Let $k$ be a global field, $\lvP(k)$ be the set of 
non-trivial places of $k$, and for $\vk\in\lvP(k)$, 
let $k_\vk$ be the completion of $k$ with respect 
to~$v$. Denoting by ${}_n(\ha4)$ the $n$-torsion 
in an Abelian group, e.g.\ ${}_n(\lvQ/\lvZ)\cong\lvZ/n$,
the Hasse\ha1--Brauer\ha1--Noether LGP asserts
that one has a canonical exact sequence:
\vskip7pt
\centerline{$0\to{}_n\Br(k)\to\mathop{\oplus}\limits_{\vk}
              {}_n\Br(k_\vk)\to\lvZ/n\to0$,}
\vskip4pt
\noindent
where, the first map is the direct sum of all the canonical
restriction maps ${}_n\Br(k)\to{}_n\Br(k_\vk)$, 
and the second map is the sum of the invariants 
{\small$\sum_v{\rm inv}_v$}.
\vskip3pt
It is a fundamental observation by \nmnm{Kato}~[Ka] 
that the above local-global principle has higher 
dimensional variants as follows: First, following 
\nmnm{Kato}~loc.cit, for every positive integer~$n$, 
say $n=m p^r$ with $p$ the characteristic exponent 
and $(m,p)=1$, an integer twist $i$,  one sets $\lvZ/n(0)=\lvZ/n$, 
and defines in general 
$\lvZ/n(i):=\mu_m^{\otimes i}\oplus W_r\,\Omega^i_{\rm log}[-i]$,
where $W_r\,\Omega_{\rm log}$ is the logarithmic 
part of the de Rham--Witt complex on the \'etale 
site, see \nmnm{Illusie}~[Ill] for details. 
With these notations, for every (finitely generated) 
field $K$ one has:
\vskip3pt
\centerline{$\HHx1K0=
{\rm Hom}_{\rm cont}(G_K,\lvZ/n)$, \ha9
      $\HHx 2 K 1={}_n\Br(K)$,}
\vskip3pt
\noindent
where $G_K$ is the absolute Galois group of $K$. 
Thus the cohomology groups $\HHx{i+1} K i$ 
have a particular arithmetical significance, and in these 
notation, the Hasse\ha1--Brauer\ha1--Noether LGP 
is a local-global principle for the cohomology group 
$\HHx 2 K 1$. Noticing that $K$ is a 
global field iff $\,\dim(K)=1$, \nmnm{Kato} had the
fundamental idea that for finitely generated fields $K$ 
with $\dim(K)=d$, there should exist similar local-global 
principles for $\HHx{d+1} K d$. 
\vskip7pt
%
%
\noindent
$\bullet\,$ {\it The Kato cohomological complex\/} \KC  
\vskip5pt                                             
We briefly recall Kato's cohomological complex
(similar to complexes defined by the Bloch--Ogus) 
which is the basis of the higher dimensional Hasse 
local-global principles, see~\nmnm{Kato}~[Ka],~\S1,
for details. Let $L$ be an arbitrary field, and 
recall the canonical isomorphism (generalizing 
the classical Kummer Theory isomorphism) 
$h^1:L\tms/n\to \HHx 1 L 1$. As explained 
in~[Ka],~\S1, the isomorphism $h^1$ gives 
rise canonically for all $q\neq0$ to morphisms, 
which by the (now proven) Milnor--Bloch--Kato 
Conjecture are actually isomorphisms:
\[
h^q:K^{\mic M}_q(L)/n\to\HHx q L q,\
\{a_1,\dots,a_q\}/n\mapsto \cppr{h^1(a_1)}{h^1(a_q)}
=:\cppr{a_1}{a_q}.
\]
Further, let $v$ be a discrete valuation of $L$. Then
one defines the \defi{boundary homomorphism}
\[
\partial_v:\HHx{q+1}L{q+1}\to\HHx{q}{Lv}{q},
\]
defined by $a\mpstt{\partial_v}\hb1v(a)$ if $q\!=\!0$,  
$a\cpm\cppr{a_1}{a_q}\mpstt{\partial_v}v(a)\cdot\ha1 
                              \cppr{a_1}{a_q}$
for $a\!\in\! L\tms\!$, $a_1,\dts,a_q\!\in\! U_v$~if~$q\!>\!0$.  
\vskip2pt
Now let $X$ be an excellent integral scheme, 
with generic point $\eta_X$, and recall the 
notations $X_i, X^i\subset X$; hence $X_0\subset X$ 
are the closed points, and $X_{\dim(X)}=\{\eta_X\}$. 
By mere definitions, for every $x_{i+1}\in X_{i+1}$, 
one has that $X_{x_{i+1}\hb1,1}\subset X_i$ 
consists of all the points $x_i\in X_i$ which lie 
in the closure of $X_{x_{i+1}}:=\oli{\{x_{i+1}\}}$. 
Since $X$ is excellent,  the normalization 
$\tl X_{x_{i+1}}\to X_{x_{i+1}}$ 
of $X_{x_{i+1}}$ is a finite morphism. Hence 
for every $x_i\in X_{x_{i+1}\hb1,1}$, there a 
finitely many $\tlx\in\tl X_{x_{i+1}}$ 
such that $\tlx\mapsto x_i$ under 
$\tl X_{x_{i+1}}\to X_{x_{i+1}}$,  and the following 
hold: The local rings $\clO_{\tlx}$ of all $\tlx\mapsto x_i$ 
are discrete valuations rings of the residue field 
$\kappa(x_{i+1})$, say with valuation $v_{\tlx}$, 
and the residue field extensions $\kappa(\tlx)|\kappa(x_i)$ 
are finite field extensions. Then for every integer $n>1$,
which is invertible on $X$, letting $0\leqslant i<\dim(X)$, 
one gets a sequence of the form:
\[
\dts\!\to\oplus_{x_{i+1}\in X_{i+1}}
  \HHx{i+2}{\kappa(x_{i+1})}{i\!+\!1}\!\to
    \oplus_{x_i\in X_i}\HHx{i+1}{\kappa(x_i)}{i}
       \!\to\dts
\leqno{\indent{\KC}}   
\]
where the component 
$\HHx{i+2}{\kappa(x_{i+1})}{i+1}\to
\oplus_{x_i\in X_i}\HHx{i+1}{\kappa(x_i)}{i}$
is defined by
\[
{\textstyle\sum}_{\tlx\in \tl X_{x_{i+1}}\!,\,\tlx\mapsto x_i}
{\rm cor}_{\kappa(\tlx)|\kappa(x_i)}\circ\delta_{v_{\tlx}}
\]
\begin{theorem}
[\nmnm{Kato}~\hbox{[Ka]},~Proposition~1.7]
\label{KC}
Suppose that $X$ is an excellent
scheme such that for all $p$ dividing $n$ and 
$x_i\in X_i$ one has: If $p=\chr\big(\kappa(x_i)\big)$,
then $[\kappa(x_i):\kappa(x_i)^p]\leq p^i\!$. Then
\KC\ is a complex. In particular, if $n$ is invertible
on $X$, then \KC\ is a complex.
\end{theorem}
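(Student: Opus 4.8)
The claim is that the composition of two consecutive maps of $\KC$ is zero, and the plan is to reduce this, slot by slot, to a reciprocity law for iterated residues at a two-dimensional local point, and then to verify that law by an explicit residue computation. First I would fix points $\xi\in X_{i+2}$ and $x\in X_i$ with $x\in\oli{\{\xi\}}$. Since the terms and differentials of $\KC$ are direct sums indexed by the points of $X$, and (by the finiteness of residues that is part of the well-definedness of $\KC$) every class has residue $0$ at all but finitely many valuations, it suffices to show that the $(\xi,x)$-component
\[
\HHx{i+3}{\kappa(\xi)}{i+2}\longrightarrow\bigoplus_{x'\in X_{i+1}}\HHx{i+2}{\kappa(x')}{i+1}\longrightarrow\HHx{i+1}{\kappa(x)}{i}
\]
of $d\circ d$ vanishes. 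This component depends only on the local ring $\clO_{X,x}$ and the normalizations entering the two differentials; since $\oli{\{\xi\}}$ is excellent with $x$ of codimension $2$ in it, I would replace $X$ by $\Spec A$ for $A$ an excellent two-dimensional local domain with generic point $\xi$ and closed point $x$, and set $F:=\kappa(\xi)$. The height-one primes $\eup_1,\dots,\eup_r$ of the normalization of $A$ give discrete valuations $v_1,\dots,v_r$ of $F$ with residue fields $E_j$; the height-one primes over $\eum_A$ of the normalizations of the $A/\eup_j$ give discrete valuations $w$ of the $E_j$ with residue fields finite over $\kappa(x)$; and the standard commutation rules between residues and corestrictions (plus transitivity of corestriction) let me fold all the finite residue-field extensions into corestriction maps.

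After this reduction, the theorem amounts to a \emph{double residue reciprocity law}: on $\HHx{i+3}{F}{i+2}$ the sum over all two-step chains $\eum_A\supset\eup_j\supset(0)$ of the associated composites $\operatorname{cor}\circ\partial_w\circ\partial_{v_j}$ vanishes in $\HHx{i+1}{\kappa(x)}{i}$. To prove this I would invoke Lipman's resolution of singularities for reduced two-dimensional excellent schemes, together with the compatibility of the differentials of $\KC$ with proper pushforward (a statement at the level of maps, not presupposing that $\KC$ is a complex), to reduce $A$ to a \emph{regular} two-dimensional local ring. Here one must check that the rank hypothesis of the theorem is stable under the normalized point blow-ups in Lipman's resolution: if a point $y$ of dimension $j$ is blown up, the exceptional divisor $E$ has dimension $j+1$ and $\kappa(E)$ is the function field of a curve over $\kappa(y)$, so $[\kappa(E):\kappa(E)^p]=p\cdot[\kappa(y):\kappa(y)^p]$ --- exactly the bound allowed in dimension $j+1$ --- and every new closed point has residue field finite over that of some such $y$. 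With $A$ regular we have $\eum_A=(t,s)$ for regular parameters, and the reciprocity identity becomes a finite explicit computation with the residues of $\lvZ/n(*)$-cohomology classes along the divisors $\{t=0\}$, $\{s=0\}$ and their strict transforms.

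For the prime-to-$p$ part of the coefficients --- in particular for the whole statement when $n$ is invertible on $X$, where the displayed rank condition is vacuous --- this last computation is the classical tame-symbol (Weil) reciprocity: the groups $\HHx{q+1}{L}{q}$ with their residues form the ``Galois cohomology cycle module'', whose cycle complex is a complex by the Bloch--Ogus\,/\,Rost formalism, the key input being precisely the vanishing of the iterated residue on a regular two-dimensional local ring. This already proves the ``in particular'' clause. For the $p$-primary part, with $p=\chr(\kappa(x_i))$ dividing $n$, the coefficient object $\lvZ/n(i)$ involves the logarithmic de Rham--Witt sheaf $W_r\Omega^i_{\rm log}$: here the hypothesis $[\kappa(x_i):\kappa(x_i)^p]\le p^i$ is exactly what forces the relevant top logarithmic differential modules of the residue fields to vanish, so that $\HHx{i+1}{\kappa(x_i)}{i}$ and the residue maps are well defined and expressible through explicit logarithmic forms, and the reciprocity identity reduces to the residue reciprocity law for $W_r\Omega_{\rm log}$ on a regular two-dimensional local ring --- the de Rham--Witt analogue of Weil reciprocity, itself obtained by presenting the complete local ring over a regular one and reducing to the one-variable case. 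Assembling the prime-to-$p$ and $p$-primary contributions over the primary decomposition of $n$ then gives the theorem.

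\textbf{Main obstacle.} The genuinely hard step is the $p$-primary case, i.e.\ the residue reciprocity law for $W_r\Omega_{\rm log}$ at a two-dimensional local point: the coefficients are no longer a locally constant sheaf and absolute purity is unavailable, so the vanishing must be extracted by hand from the structure of logarithmic differentials, and it is exactly there that the bound $[\kappa(x_i):\kappa(x_i)^p]\le p^i$ is indispensable. (Kato's original argument sidesteps resolution altogether, checking the double residue identity directly on symbol generators via the explicit formulas for $\partial_v$ recorded above, with the rank bound guaranteeing that the cohomology is generated by admissible symbols.) The tame part, and hence the case of $n$ invertible, is routine once the reduction to a regular two-dimensional local ring is in place.
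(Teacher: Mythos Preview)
The paper does not prove this statement at all: it is recorded as a citation (``\textsc{Kato}~[Ka], Proposition~1.7'') with no argument given, so there is no in-paper proof to compare against. Your proposal is a reasonable outline of a proof and, as you yourself note parenthetically, Kato's original argument in~[Ka] proceeds somewhat differently: rather than invoking Lipman's resolution to reduce to a regular two-dimensional local ring, Kato works directly with symbol generators and the explicit formulas for $\partial_v$, checking the double-residue identity by hand. Your resolution-based route is a legitimate alternative (closer in spirit to the later Rost cycle-module formalism), and it has the virtue of making the tame case essentially formal once purity on a regular surface is in place; Kato's direct approach avoids the need to verify stability of the rank hypothesis under blow-ups and the compatibility of the differentials with proper pushforward, at the cost of a longer explicit computation. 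Either way, the genuinely delicate point is exactly the one you flag as the main obstacle: the $p$-primary reciprocity for $W_r\Omega_{\rm log}$, where the bound $[\kappa(x_i):\kappa(x_i)^p]\le p^i$ is used.
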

That being said, the \defi{Kato Conjectures} 
are about aspects of the fact that in {\it arithmetically 
significant situations,\/} the complex~\KC\ above is 
exact, excepting maybe for $i=0$, where the homology 
of~\KC\ is perfectly well understood. And \nmnm{Kato}
proved himself several forms of the above local-global 
Principles in the case $X$ is an arithmetic scheme of
dimension $\dim(X)=2$ and having further properties.
Among other things, one has:
\begin{theorem}
[\nmnm{Kato}~\hbox{[Ka]},~Corollary,~p.145]
\label{kato}
Let $X$ be a proper regular integral $\lvZ$-scheme,
$\dim(X)=2$, and $K=\kappa(X)$ having no orderings. 
Then one has an exact sequence:
\[
0\to\HHx3K2\to
\oplus_{x_1\in X_1}\HHx{2}{\kappa(x_1)}{1}
\to\oplus_{x_0\in X_0}\HH^1\big(\kappa(x_0),\lvZ/n\big)
\to\lvZ/n\to0.
\]
\end{theorem}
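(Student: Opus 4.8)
This is \nmnm{Kato}'s theorem [Ka], Corollary on p.~145, so for the present note it is enough to cite loc.\,cit.; let me nonetheless indicate the shape of the argument. The claim is the exactness of the Kato complex \KC\ attached to the arithmetic surface $X$ --- which by Theorem~\ref{KC} is already a complex --- prolonged on the right by the trace map $\bigoplus_{x_0\in X_0}\HH^1\big(\kappa(x_0),\lvZ/n\big)\to\lvZ/n$ coming from the finite residue fields $\kappa(x_0)$, and augmented on the left by $\HHx{3}{K}{2}$. The plan is to deduce the exactness, term by term, from the classical one-dimensional input recalled above, namely the Hasse--Brauer--Noether exact sequence for global fields together with the behaviour of the boundary maps $\partial_v$ on Milnor $K$-theory.

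First I would fix an auxiliary fibration $\pi\colon X\to S$ onto a one-dimensional base --- a smooth proper curve over a finite field in equal characteristic, or $S=\Spec\clO_F$ with $F=\abs K$ in mixed characteristic --- with $X$ regular and, replacing $X$ by a blow-up if necessary, with generic fibre $X_\eta$ a regular proper geometrically connected curve over the global field $\kappa(S)$. The points $x_1\in X_1$ then split into the \emph{horizontal} ones, dominating $S$, for which $\kappa(x_1)$ is a finite extension of $\kappa(S)$, and the \emph{vertical} ones, lying in a closed fibre, for which $\kappa(x_1)$ is a global function field over the corresponding finite field; in either case $\kappa(x_1)$ is a \emph{global field}, so $\HHx{2}{\kappa(x_1)}{1}={}_n\Br(\kappa(x_1))$ is controlled by class field theory, and each closed point $x_0\in X_0$ lies on finitely many of the curves $X_{x_1}$ and there defines a non-archimedean place of the corresponding global field $\kappa(x_1)$. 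This reduces the problem to comparing the local data at the points $x_0$ with the totality of the Hasse--Brauer--Noether sequences for the fields $\kappa(x_1)$.

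Assembling those sequences into a double complex indexed by the incidence of prime divisors and closed points on $X$ and chasing diagrams then yields, going from right to left: surjectivity onto $\lvZ/n$, since some $\kappa(x_0)$ has degree prime to any prescribed prime; exactness at $\bigoplus_{x_0}\HH^1\big(\kappa(x_0),\lvZ/n\big)$ and at $\bigoplus_{x_1}{}_n\Br(\kappa(x_1))$, where the exactness of each individual Hasse sequence is used and where the hypothesis that $K$ \emph{has no orderings} is needed, to discard the $2$-torsion contributions of the real archimedean places, which would otherwise obstruct exactness; and finally injectivity of $\HHx{3}{K}{2}\to\bigoplus_{x_1}{}_n\Br(\kappa(x_1))$, i.e., the vanishing of the subgroup of classes in $\HHx{3}{K}{2}$ unramified along every prime divisor of $X$. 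This last injectivity is the genuinely hard point --- as is always the case with the Kato conjectures --- and is established by \nmnm{Kato} through a purity argument identifying, via the Bloch--Ogus coniveau spectral sequence, this unramified subgroup with a cohomology group of $X$ that vanishes for arithmetic reasons; the truly delicate part is the control of \emph{wild} ramification at the residue characteristics, where the $p$-primary part is handled via logarithmic de Rham--Witt cohomology. For the applications in this note one only ever needs $n$ invertible on $X$, which is the case where Theorem~\ref{KC} applies directly and the purity input is at its cleanest.
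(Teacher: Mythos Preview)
The paper gives no proof of this statement at all: Theorem~\ref{kato} is simply quoted from \nmnm{Kato}~[Ka], Corollary, p.~145, and used as a black box. Your opening sentence already matches the paper exactly, and that alone suffices here.

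The remainder of your proposal is a reasonable informal outline of how \nmnm{Kato}'s original argument proceeds (fibration over a one-dimensional base, horizontal/vertical splitting of $X_1$, reduction to Hasse--Brauer--Noether for the global fields $\kappa(x_1)$, the role of the no-orderings hypothesis, and the hard injectivity step via coniveau/purity). Two small caveats if you keep the sketch: first, once you blow up $X$ you are proving exactness for a different surface $X'$, so you should mention that the Kato complex is invariant under proper birational modifications of regular arithmetic surfaces (which \nmnm{Kato} establishes); second, the surjectivity onto $\lvZ/n$ is immediate from the fact that $\HH^1\big(\kappa(x_0),\lvZ/n\big)\cong\lvZ/n$ for any single finite residue field, so the degree argument is unnecessary. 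But none of this is required for the present paper, which only invokes the result.
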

\vskip2pt
Finally, notice that in~Theorem~\ref{kato} above, 
$K$ is finitely generated with $\dim(K)=2$.
Unfortunately, for the time being, the above result
is not known to hold in the same form in higher 
dimensions $d:=\dim(K)>2$, although it is conjectured
to be so. There are nevertheless partial results 
concerning the local-global principles involving 
$\HHx{d+1}{K}{d\,}$. From those results, we pick 
and choose only what is necessary 
for our goals, see below. 
\begin{notations/remarks}
\label{nota0} 
Let $K$ be a finitely generated field with constant 
field~$\kappa$. We supplement Notations/Remarks 
\ref{nota-1} as follows:
\vskip2pt
\itm{25}{
\item[1)] $n>1$ is a positive integer not divisible 
by $\chr(K)$. 
\vskip2pt
\item[2)] $\ko\subset K$ is {\it global subfield,\/} 
and $\So$ be the canonical model of $\ko$, i.e.,
$\So=\Spec\clO_{\ko}$ if $\ko$ is a number field,
respectively $\So$ is a projective smooth curve 
if $\kappa$ is finite.
\vskip2pt
\item[3)] Let $\lvP_{\mic{fin}}(\ko)$ be 
the set of finite places of $\ko$. 
For $v\in\lvP(\ko)$, consider/denote:
\vskip2pt
\itm{10}{
\item[-] The Henselization $R_\vo$ of $\clO_v$. 
Hence $R_v$ is a Henselian DVR with finite residue field.
\vskip2pt
\item[-] The Henseliazation $\kov=\Quot(R_v)$ of
$\ko$ at~$v$.
}}
\end{notations/remarks}
\vskip5pt
\noindent
$\bullet\,$ {\it Localizing the global field $\ko$\/} 
\vskip5pt
In the above notations, for every $v\in\lvP(\ko)$, 
consider the compositum $K_v:=K\kov$ of $K$ and 
$\kov$ (in some  fixed algebraic closure $\oli K$). 
Then via the restriction functor(s) in cohomology, one 
gets canonical \defi{localization maps} \
$
\HHx{d+1}{K}{d}\to\HHx{d+1}{K_v}{d\,}.
$
%
%
%
%
\begin{theorem}
[\nmnm{Jannsen}~\hbox{[Ja]},~Theorem~0.4]
\label{Jannsen}
In the above notations, suppose that $\chr(K)$ does not
divide~$n$. Then the localization maps give rise 
to an embedding
\[
\HHx{d+1}K{d\,}\to
    \oplus_{v\in\lvP(\ko)}\HHx{d+1}{K_v}{d\,}.
\]
\end{theorem}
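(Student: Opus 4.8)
The plan is induction on the Kronecker dimension $d=\dim(K)$, using that a prime divisor of a finitely generated field has residue field again finitely generated, of dimension $d-1$ (as recalled after Theorem~\ref{TheMThm} and in Notations/Remarks~\ref{nota-1}(4)). For $d=1$ the field $K$ is global, one may take $\ko=K$, and $\HHx{2}{K}{1}={}_n\Br(K)$; the asserted embedding into $\bigoplus_v{}_n\Br(K_v)$ is then exactly the injectivity in the Hasse--Brauer--Noether sequence recalled above, using that a Henselian local field has the same $n$-torsion Brauer group as its completion, and including the archimedean places of $K$.

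For the inductive step fix $K$ with $d:=\dim(K)\ge2$, a maximal global subfield $\ko\subset K$ with canonical model $\So$, and $\alpha\in\HHx{d+1}{K}{d}$ whose image $\alpha_v\in\HHx{d+1}{K_v}{d}$ is $0$ for every $v\in\lvP(\ko)$; we must show $\alpha=0$. First I would choose a connected regular scheme $\clX$, proper and flat over $\So$, with $\kappa(\clX)=K$ and $\dim(\clX)=d$ --- in characteristic $0$ by resolution of singularities, in characteristic $p$ by working $\ell$-primarily for each $\ell\mid n$ and invoking Gabber's prime-to-$\ell$ alteration theorem together with a corestriction argument. By Theorem~\ref{KC} the Kato complex of $\clX$
\[
\HHx{d+1}{K}{d}\xrightarrow{\ \partial\ }\bigoplus_{x\in\clX_{d-1}}\HHx{d}{\kappa(x)}{d-1}\longrightarrow\cdots\longrightarrow\bigoplus_{x\in\clX_{0}}\HHx{1}{\kappa(x)}{0}
\]
is defined, and its exactness at the left-hand term --- the injectivity of $\partial$ --- is Theorem~\ref{kato} (and its function-field analogue) when $d=2$, and for $d>2$ is the top-degree instance of Kato's conjecture for the arithmetic scheme $\clX$, which one obtains from the Kerz--Saito, Jannsen--Saito resolution of Kato's conjecture over finite fields by the standard passage to the arithmetic case. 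Granting this, it suffices to prove $\partial_x\alpha=0$ for every codimension-one point $x$ of $\clX$.

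So fix such an $x$. If $x$ is \emph{vertical}, lying over a closed point $v_0\in\lvP(\ko)$, then $x$ remains a prime divisor of $\clX_{R_{v_0}}$ with unchanged residue field $\kappa(x)$, the residue map factors through $\HHx{d+1}{K_{v_0}}{d}$, and $\alpha_{v_0}=0$ gives $\partial_x\alpha=\partial_x(\alpha_{v_0})=0$. If $x$ is \emph{horizontal}, dominating $\So$, then $\kappa(x)$ is finitely generated with $\dim\kappa(x)=d-1$, and because $\ko$ is a global subfield of $\kappa(x)$ its maximal global subfield $\ell_x$ is a finite extension of $\ko$; by the inductive hypothesis applied to $\kappa(x)$ it is enough that $\partial_x\alpha$ vanish in $\HHx{d}{\kappa(x)_w}{d-1}$ for every $w\in\lvP(\ell_x)$. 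Every such $w$ restricts to a non-trivial place $v\in\lvP(\ko)$; base-changing $\clX$ along $\Spec R_v\to\So$ decomposes $x$ into the divisors indexed by the places $w'\mid v$ of $\ell_x$, with residue fields $\kappa(x)_{w'}$, and by functoriality of the Kato complexes under this flat base change the residue of $\alpha_v=0$ along these divisors is the family of $w'$-localizations of $\partial_x\alpha$; so these all vanish. Ranging over all $v\in\lvP(\ko)$, hence over all $w\in\lvP(\ell_x)$ (archimedean $w$ included), $\partial_x\alpha$ is everywhere locally trivial, hence $0$ by induction. As $x$ was arbitrary, $\partial(\alpha)=0$, so $\alpha=0$.

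The main obstacle is the injectivity of the first Kato differential for the $d$-dimensional arithmetic model $\clX$: for $d=2$ this is precisely Theorem~\ref{kato}, but for $d>2$ it rests on Kato's conjecture in its (now established) geometric form plus the descent to the arithmetic setting, and producing an unconditional statement in positive characteristic forces the detour through prime-to-$\ell$ alterations. By comparison the remaining work --- identifying the maximal global subfield $\ell_x$ of a prime divisor, and checking the compatibility of residues with localization uniformly over vertical divisors, horizontal divisors, and archimedean places --- is bookkeeping, but it must be set up carefully, and it is where a compatible choice of $\clX$ and of the models of the various $\kappa(x)$ is convenient.
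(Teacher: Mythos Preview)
The paper does not prove this theorem at all: it is quoted verbatim from \nmnm{Jannsen}~[Ja], Theorem~0.4, and used as a black box in the proof of Proposition~\ref{prop-compat}. So there is no ``paper's own proof'' to compare your proposal against.

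As for the proposal itself, the inductive strategy you outline is broadly in the spirit of Jannsen's actual argument, but there are two genuine difficulties. First, when $\chr(K)=0$ the base $\So=\Spec\clO_{\ko}$ has \emph{mixed} characteristic, so ``resolution of singularities'' does not produce a regular proper flat $\So$-model $\clX$ once $d>2$; one is forced into alterations in this case too, and the corestriction bookkeeping must be done uniformly. Second, and more seriously, the step you label ``the top-degree instance of Kato's conjecture for the arithmetic scheme $\clX$ \dots\ by the standard passage to the arithmetic case'' is not a lemma one can simply invoke: that passage \emph{is} essentially the content of Jannsen's paper~[Ja]. The results of \nmnm{Kerz--Saito}~[K--S] (Theorem~\ref{KerzSaito} here) give exactness of \KC\ over finite fields and over Henselian DVRs with finite residue field, but upgrading this to the full arithmetic model over $\So$ --- in particular obtaining injectivity of the first differential there --- is the substance of what Jannsen proves, via a careful weight/spectral-sequence argument combined with alterations. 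So your sketch is circular at its key point: you are assuming the hard arithmetic input that the theorem encodes.
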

\vskip3pt
\noindent
$\bullet\,$ {\it Local-global principles over $R_v$, 
                                             $v\in\lvP(\ko)$\/}
\vskip5pt 
In the above notations, for every non-archimedean
place $v\in\lvP(\ko)$, let $R_v\subset\kov$ be the 
(unique) Henselization of the valuation ring $\clO_v$
inside $\kov$, hence recall that $R_v$ is a Henselian 
discrete valuation ring with residue field $\kappa(v)$ 
finite. This being said, one has the following:                                            
\begin{theorem}[\hbox{\nmnm{Kerz--Saito}~[K--S], 
Theorem~8.1}]
\label{KerzSaito}
Suppose that $R$ is either {\rm(i)}\ha2a finite field, or 
{\rm(ii)}\ha2a~Henselian discrete valuation ring 
with finite residue field, such that $n$ is invertible 
in $R$, and $\mu_n\subset R$. Then for every 
projective regular flat $R$-scheme $X$, the complex 
\KC\ for $X$ is exact, with the only exception 
of the homology group $\,\HH_0\KC=\lvZ/n$ 
in the case~{\rm(i)}. 
\end{theorem}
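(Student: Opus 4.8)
The plan is to deduce the asserted exactness from the already known low-dimensional cases of the Kato conjectures by an induction on $\dim X$, the engine of the induction being a Bertini/fibration argument over finite fields, refined in unequal characteristic by Gabber's alteration techniques. Since $n$ is invertible on $X$ and $\mu_n\subset R$, one first reduces to $n=\ell$ prime, so that every twist occurring in \KC\ is a power $\mu_\ell^{\otimes j}$; one then reformulates the statement as the vanishing of the \emph{Kato homology} $KH_a(X,\lvZ/\ell):=\HH_a\big(\KC\big)$ for $a\neq 0$, together with $KH_0=\lvZ/\ell$ in case~(i) and $KH_0=0$ in case~(ii). A useful technical remark is that $\KC$ is the $E^1$-page of the niveau (Bloch--Ogus--Gabber) spectral sequence for \'etale cohomology with $\mu_\ell^{\otimes\bullet}$-coefficients; this equips $KH_*$ with the formal structure of a homology theory on schemes over the base --- proper pushforward, pullback along open immersions and the attendant localization long exact sequences, and compatibility with the Gersten resolution --- which is exactly what makes the subsequent d\'evissage possible.

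The base of the induction is supplied by the cases already in the literature: for curves over finite fields, exactness of $\KC$ is the reciprocity law of global class field theory; for arithmetic surfaces it is Kato's theorem, compare Theorem~\ref{kato}; and in case~(ii) one has the localization sequence attached to the decomposition of $X$ into its special fibre over the finite residue field and its generic fibre over the Henselian valued field $\operatorname{Frac}(R)$, which both starts the induction in that case and accounts for the discrepancy between~(i) and~(ii) in bottom degree, the extra unit of cohomological dimension carried by $\operatorname{Frac}(R)$ being responsible for the reciprocity ``closing up'' to $0$ rather than to $\lvZ/\ell$. One also keeps at hand throughout Deligne's weight estimates (Weil~II): for a smooth projective variety $V$ over a finite field, the Galois (co)invariants of $\HH^i_{\rm et}\big(\oli V,\lvQ_\ell/\lvZ_\ell(j)\big)$ are finite unless $i=2j$, which is what allows the error terms produced by the d\'evissage to be discarded.

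For the inductive step one may assume $\dim X=d\ge 2$ over a finite field, the Henselian case having been reduced to it. After a suitable blow-up one arranges a projective morphism $f\colon X'\to C$ onto a smooth projective curve $C$ over the base, with smooth projective generic fibre of dimension $d-1$ and controlled remaining fibres; equivalently, using Poonen's Bertini theorem over finite fields (and its refinement over the Henselian DVR $R$, in the spirit of Gabber and of Kerz--Saito) one produces a \emph{regular} hypersurface section $Y\subset X$ meeting every relevant stratum transversally, together with a localization triangle relating $KH_*(X)$, $KH_*(Y)$ and $KH_*(U)$ for $U=X\smallsetminus Y$. A semi-purity statement identifies $KH_*(U)$ in terms of data of strictly smaller dimension, so that vanishing for $X$ follows from vanishing for the regular $(d-1)$-dimensional $Y$ --- covered by the inductive hypothesis --- together with the Weil~II vanishing of the error terms. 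The non-regular schemes that are forced on one along the way (singular fibres of $f$, or non-regular sections in unequal characteristic) are handled by Gabber's prime-to-$\ell$ alteration theorem: one chooses a projective, generically finite $\pi\colon\widetilde W\to W$ of degree prime to $\ell$ with $\widetilde W$ regular and its ``bad locus'' a strict normal crossings divisor, and the composite $\tfrac1{\deg\pi}\,\pi_*\circ\pi^*$ exhibits $KH_*(W,\lvZ/\ell)$ as a direct summand of $KH_*(\widetilde W,\lvZ/\ell)$, so the vanishing for the regular $\widetilde W$ descends to $W$.

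The \textbf{hard part} is the fibration step. One must build the pencil so that (a) the generic fibre is smooth projective of the expected dimension, (b) the locus of bad fibres is a divisor contributing only in strictly smaller Kato degree, and (c) the niveau filtration on $X$ relative to $f$ yields a spectral sequence whose graded pieces are Kato complexes of the fibres twisted by the monodromy local systems $R^qf_*\mu_\ell^{\otimes\bullet}$; showing that the contribution of those local systems dies is precisely where the weight estimates of Weil~II, applied over the finite residue fields of the closed points of $C$, are indispensable. This is also the point at which cases~(i) and~(ii) genuinely diverge, since over a Henselian DVR the base $C$ acquires one extra unit of cohomological dimension, which complicates the bookkeeping of the spectral sequence and, through the special-fibre localization sequence, is responsible for the disappearance of the bottom homology group in case~(ii). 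A secondary but non-trivial obstacle is the Bertini-with-regularity statement over the Henselian DVR $R$ itself: Poonen's theorem applies verbatim only over the finite residue field, and producing a section that is regular over $R$ and in good position with respect to the special fibre requires the more delicate moving techniques of Gabber and of Kerz--Saito.
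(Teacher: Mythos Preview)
The paper does not prove this theorem at all: Theorem~\ref{KerzSaito} is stated with the attribution ``Kerz--Saito~[K--S], Theorem~8.1'' and is quoted as a black box, to be applied later in the proof of Proposition~\ref{prop-compat}. There is therefore nothing in the paper to compare your argument against.

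Your write-up is a reasonable high-level sketch of the strategy in the Kerz--Saito paper itself (niveau spectral sequence, Kato homology as a homology theory with localization sequences, induction on dimension via Bertini-type hypersurface sections over finite fields, Gabber's prime-to-$\ell$ alterations to handle the non-regular loci, and Deligne's weight bounds to kill the error terms). As a roadmap it is faithful to the actual argument, but it is an outline rather than a proof: the substantive content---the Bertini theorem over a Henselian DVR with control on the special fibre, the precise pullback/pushforward formalism for Kato homology, and the spectral-sequence bookkeeping---is exactly what fills the bulk of [K--S] and is only gestured at here. For the purposes of the present paper, the correct ``proof'' is simply the citation.
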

\section{Consequences/applications of the 
                        local-global principles}
\noindent
In this section we give a few consequences of the 
higher Hasse local-global principles mentioned 
above, as well as an arithmetical interpretation of
these consequences. 
\vskip5pt
\noindent
A) \ {\it A technical result for later use\/}
\vskip5pt
\begin{notations/remarks}
\label{nota1}
Let $L$ be a field satisfying Hypothesis~(H) 
from the Introduction. Let $\ko\subset L$ be 
a global subfield, $n\neq\chr(L)$ be a {\it prime 
number,\/} and suppose that $\mu_{2n}\subset \ko$.
\vskip2pt
\itm{25}{
\item[1)] $\Uo=\Spec\Ro\subset \So$ are open 
subsets with $n\in\Ro^\times$, and set: 
\vskip3pt
\centerline{$\Delta_{\Uo}:=\{\uop\in
  \ko\tms\mid v(\uop-1)>2\cdot v(n),\, 
   \forall\,v\not\in\Uo\}$.}
\vskip3pt
\item[2)] For $\bm f:=(f_1,\dts,f_r)$ with 
$f_i\in L\tms\!$, and dense open subsets 
$\Uo\subset\So$, denote:
\[
\HH_{\Uo,\bm f}:=
\big\langle\uop\cpm\cppr{f_1}{f_r}
\mid\uop\in\Delta_{\Uo}\big\rangle\subset
\HH^{r+1}\big(L,\lvZ/n(r)\big).
\]
}
\end{notations/remarks}
\begin{proposition}
\label{prop-compat} 
In the above notation, let $\clZ$ be a projective 
smooth $\Uo$-variety with generic fiber 
$Z:=\clZ\times_{\Uo}\!\ko$, and function field 
$L=\ko(Z)$ with $\dim(L)=r$. The 
following hold:
\vskip2pt
\itm{25}{
\item[{\rm1)}] The map 
$\,\HH_{\Uo,\bm f}\to\oplus_{z\in Z^1} 
\HH^r\big(\kappa(z),\lvZ/n(r\!-\!1)\big)$
from the Kato complex 
{\rm\defi{(KC)}}~is~injective.
\vskip2pt
\item[{\rm2)}] Let $\bm\alpha=\uop\cpm\cppr{f_1}{f_r}
\in\HH_{\Uo,\bm f}$ and $z\in Z^1$ satisfy
$\partial_z(\bm\alpha)\neq0$, and $w$ be the 
prime divisor of $L|\ko$ with $\clO_w=\clO_z$.
Then there is an $f_i$ such that 
$w(f_i)\not\in n\hb1\cdot\hb1wL$.
}
\end{proposition}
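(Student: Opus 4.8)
\textbf{Proof plan for Proposition~\ref{prop-compat}.}

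The plan is to treat part~1) as a statement about the Kato complex \KC\ for the projective regular flat model of $Z$ over $\Uo$, and then deduce part~2) almost formally from part~1) together with the explicit description of the boundary maps $\partial_v$ on symbols. First I would fix a projective regular flat $\Uo$-model, say $\clZ$ itself (after possibly shrinking $\Uo$ so that $\clZ$ is smooth and all the $f_i$ and $\uop$ appearing are units on it away from a divisor), and identify the first few terms of \KC: in codimension $0$ the group is $\HHx{r+1}{L}{r}$, in codimension $1$ it is $\oplus_{z\in Z^1}\HHx{r}{\kappa(z)}{r-1}$, and then the terms coming from the closed fibers over places $v\notin\Uo$. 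The subgroup $\HH_{\Uo,\bm f}\subset\HHx{r+1}{L}{r}$ is by construction generated by symbols $\uop\cpm\cppr{f_1}{f_r}$ with $\uop\in\Delta_{\Uo}$, and the point of the condition $v(\uop-1)>2v(n)$ for $v\notin\Uo$ is exactly that such a class is unramified at every point of the special fibers over $v\notin\Uo$ --- one checks this by the standard estimate that a symbol $\{u\}$ with $u$ a $1$-unit sufficiently close to $1$ (closer than the different/ramification bound, here measured by $2v(n)$) lies in the image of $\HHe{1}(\clO_v^h)$, hence has trivial boundary. So on $\HH_{\Uo,\bm f}$ the only possibly nonzero boundaries are the $\partial_z$ for $z\in Z^1$, i.e.\ the component of the Kato differential landing in $\oplus_{z\in Z^1}\HHx{r}{\kappa(z)}{r-1}$.

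Next I would invoke the local-global machinery to get injectivity. By Theorem~\ref{Jannsen} the localization map $\HHx{d+1}{L}{d}\to\oplus_{v\in\lvP(\ko)}\HHx{d+1}{L_v}{d}$ is injective (here $d=r$); and for each $v\notin\Uo$, Theorem~\ref{KerzSaito} applied to the base ring $R_v$ (a Henselian DVR with finite residue field, with $n$ invertible and $\mu_n\subset R_v$ by the hypothesis $\mu_{2n}\subset\ko$) says the Kato complex of $\clZ\times_{\Uo}R_v$ is exact except in degree $0$; while for $v\in\Uo$ the corresponding local contribution is controlled by exactness over the finite residue field, again Theorem~\ref{KerzSaito}(i). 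Chasing these exactness statements: a class $\bm\alpha\in\HH_{\Uo,\bm f}$ with $\partial_z(\bm\alpha)=0$ for all $z\in Z^1$ is, at each place $v$, in the kernel of the first differential of the local Kato complex, hence (by exactness, using that we are not in the anomalous degree-$0$ spot because $r\geq2$) it is "globally trivial" locally at every $v$; then Theorem~\ref{Jannsen} forces $\bm\alpha=0$. This is the step I expect to be the main obstacle --- not because any one ingredient is hard, but because one has to be careful that the $\Delta_{\Uo}$-condition is exactly matched to the ramification bounds so that $\HH_{\Uo,\bm f}$ really does inject into the codimension-$1$ piece and not merely into the full direct sum over all of $X^1$ for a big model $X$; making the model choices and the "$1$-units are unramified" estimate precise, uniformly, is the delicate bookkeeping.

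Finally, part~2) is a direct consequence of the shape of $\partial_z$ on symbols. Write $\bm\alpha=\uop\cpm\cppr{f_1}{f_r}$ and let $w$ be the prime divisor with $\clO_w=\clO_z$. Using the formula $\partial_w(a\cpm\cppr{a_1}{a_r})=w(a)\cdot\cppr{a_1}{a_r}$ when the $a_i$ are $w$-units, together with the multilinearity and the Steinberg relation, one expands $\partial_w(\uop\cpm\cppr{f_1}{f_r})$ in terms of the residues $\oli f_i\in\kappa(w)\tms$ and the valuations $w(f_i)$, $w(\uop)$; since $\uop\in\Delta_{\Uo}$ and $w$ corresponds to a point of $Z$, $w$ is trivial on $\ko$ and $w(\uop)=0$, so the only way $\partial_w(\bm\alpha)$ can be nonzero is that the residue symbol $\cppr{\oli f_{i_1}}{\oli f_{i_{r-1}}}$ obtained by "dividing out" one slot is nonzero, which in turn forces at least one $f_i$ to have $w(f_i)\notin n\cdot wL$ --- otherwise every $f_i$ would be, up to an $n$-th power and a $w$-unit, trivial in $\kappa(w)\tms/n$-terms entering the symbol, killing $\partial_w(\bm\alpha)$ by the symbol calculus. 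I would spell this last implication out via the tame symbol: $\partial_w$ factors through $K^{\mic M}_{r+1}(L)/n\to K^{\mic M}_{r}(\kappa(w))/n$, and its value on $\{\uop,f_1,\dots,f_r\}$ lies in the subgroup generated by residues of the $f_i$; if all $w(f_i)\in n\cdot wL$ then after modifying by $n$-th powers we may take all $f_i\in U_w$, whence $\partial_w$ of the symbol is literally $0$. Contrapositive gives the claim.
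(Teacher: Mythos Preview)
Your argument for part~2) is correct and is how the paper implicitly treats it: $w$ is trivial on $\ko$, so $w(\uop)=0$; if every $w(f_i)\in n\cdot wL$ then modulo $n$-th powers each $f_i$ becomes a $w$-unit and the tame symbol of $\uop\cpm\cppr{f_1}{f_r}$ vanishes.

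For part~1) you have the right ingredients but there is a real gap, together with a reversal of the roles of $v\in\Uo$ and $v\notin\Uo$. The $\Delta_{\Uo}$ condition does more than make $\bm\alpha$ ``unramified along bad fibers'': for $v\notin\Uo$ the bound $v(\uop-1)>2\,v(n)$ forces $\uop$ to be an $n$-th power in ${\ko}_v$, hence $\bm\alpha|_{L_v}=0$ outright, and there is no model $\clZ\times_{\Uo}R_v$ to speak of at such $v$. Thus after Jannsen only places $v\in\Uo$ matter, and it is \emph{there} that Kerz--Saito (case~(ii)) applies to $\clZ_{R_v}:=\clZ\times_{\Uo}R_v$. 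Now the codimension-one points of $\clZ_{R_v}$ are of two kinds: points of the generic fiber (which map to $Z^1$, since $\Quot(R_v)|\ko$ is algebraic), and generic points of components of the special fiber $\clZ_{\kappa(v)}$. Your contrapositive handles only the first kind; you give no argument that $\partial_{z_R}(\bm\alpha|_{L_v})=0$ when $z_R$ lies in the special fiber, and invoking ``Kerz--Saito~(i) over the finite residue field'' does not supply one --- exactness of \KC\ for $\clZ_{\kappa(v)}$ constrains classes \emph{in} $\HH^r\big(\kappa(z_R)\big)$, not whether $\partial_{z_R}(\bm\alpha)$ lands there as zero. The paper closes exactly this gap by a direct argument: if such a special-fiber $z_R$ has $\partial_{z_R}(\bm\alpha)\neq0$, apply Kerz--Saito~(i) to the smooth projective $\kappa(v)$-variety $\oli{\{z_R\}}$ to find a codimension-two point $y$ with $\partial_y\partial_{z_R}(\bm\alpha)\neq0$; the complex property of \KC\ then gives $\sum_{z'_R}\partial_y\partial_{z'_R}(\bm\alpha)=0$, producing some $z'_R\neq z_R$ with $\partial_{z'_R}(\bm\alpha)\neq0$; and since $\clZ_{\kappa(v)}$ is smooth its connected components are pairwise disjoint, so any such $z'_R$ specializing to $y$ must lie in the generic fiber, whence one descends to the required $z\in Z^1$. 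This ``bounce off the special fiber via $\partial\circ\partial=0$'' is the missing idea in your outline.
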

\begin{proof} For $\bm\alpha\in\HH_{\Uo,\bm f}$
non-trivial, proceed as follows:
\vskip2pt
First, by \nmnm{Jannsen}'s 
Theorem~\ref{Jannsen} above, there exists 
$\vo\in\lvP(\ko)$ such that $\bm\alpha$ is 
non-trivial over $L_{\vo}=L{\ko}_{\vo}$.
In particular, $\uop$ is not an $n^{\rm th}$ power in 
${\ko}_{\vo}$, hence $\uop\in\Delta_{\Uo}$.
In particular, letting $R:=\clO^h_{\vo}$ be the 
Henselization of $\clO_{\vo}$, the base change 
$\clZ_{\!R}=\clZ\times_{\Uo}\!R$ is a smooth 
$R$-variety (because $\clZ$ was a smooth 
$\Uo$-variety). Set $\Spec R=\{\eta_0,\eum\}$. 
\vskip2pt
Second, by the \nmnm{Kerz--Saito} 
Theorem~\ref{KerzSaito} above, there are points 
$z_R\in\clZ_{\!R}^1$ such that~one~has: 
$0\neq\bm\alpha_{z_R}\!:=\partial_{z_R}(\bm\alpha)
\!\in\!\HH^r\big(\kappa(z_R),\lvZ/n(r-\hb11)\big)$.
Hence setting $k_R:=\Quot(R)={\ko}_{\vo}$, 
one has: If $z_R\mapsto\eta_0$ under 
$\clZ_{\!R}\to\Spec R$, then $z_R$ lies in the 
generic fiber $Z_{k_R}=\clZ_{\!R}\times_R{k_R}$ 
of~$\clZ_{\!R}$. Hence letting $z_R\mapsto z$ 
under $\clZ_{\!R}\to\clZ$, one gets: Since
$Z_{k_R}=Z\times_{\ko}k_R$, one has
$z\in Z^1\hb3$. Second, since 
$\kappa(z)\hra\kappa(z_R)$, it follows that
$0\neq\bm\alpha_z\!:=\partial_z(\bm\alpha)
\in\HH^r\big(\kappa(z),\lvZ/n(r-\hb11)\big)$,~as~claimed.
Next suppose that $z_R\mapsto\eum$ under 
$\clZ_{\!R}\to\Spec R$. Since $\clZ_{\!R}$ 
is a projective smooth $R$-variety, its special 
fiber $\clZ_\eum$ is reduced and has projective 
smooth integral $\kappa(\eum)$-varieties as
connected components, $\clZ_{z_R}=\oli{\{z_R\}}$
being such one. Since 
$0\neq\bm\alpha_z:=\partial_z(\bm\alpha)
               \in\HHx r{\kappa(z)}{r\hb1-\!1}$,
by \nmnm{Kerz--Saito}'s~Thm~\ref{KerzSaito}, 
there is $y\in\clZ^1_z$ with
$0\neq\partial_{y}\big(\partial_{z_R}(\bm\alpha)\big)
\in\HHx{r-1}{\kappa(y)}{r\hb1-\hb12}$. 
On the other hand, 
$\dim(\clZ_{z_R})=\dim(\clZ_{\!R})\!-\!1$, 
hence $\clZ^1_{z_R}\!\subset\clZ^2_{\!R}$. 
Hence $\codim_{\clZ_{\!R}}(y)=2$, and
$y\mapsto \eum$ under $\clZ_{\!R}\to\Spec R$.
Let $\clZ(y):=\big\{z'_R\in\clZ^1_{\!R}\mid
y\in\oli{\{z'_R\}\!}\,\big\}$.~The~Kato~complex~\KC~for 
the projective smooth $R$-scheme $\clZ_{\!R}$ 
implies: ${\textstyle\sum}_{z'_R\in\clZ(y)}\,
\partial_{y}\big(\partial_{z'_R}(\bm\alpha)\big)=0$.
Since $z_R\in \clZ(y)$ and 
$\partial_{y}\big(\partial_{z_R}(\bm\alpha)\big)\neq0$,
there must exist points $z'_R\in \clZ(y)$
satisfying: 
\[
z'_R\neq z_R,\quad
0\neq\partial_{y}\big(\partial_{z'_R}(\bm\alpha)\big)
\in\HH^{r-1}\big(\kappa(y),\lvZ/n(r\hb1-\hb{1}2)\big).
\]
We claim that all $z'_R\in \clZ(y)$, $z'_R\neq z_R$ 
must~satisfy: $z'_R\mapsto\eta_0\in\Spec R$ under
$\clZ_{\!R}\to\Spec R$. By contradiction, suppose that 
$z'_R\mapsto\eum$, or equivalently, $z'_R\in\clZ_\eum$.  
Arguing as above about as we did for~$z_R$, it follows that
$\clZ_{z'_R}=\oli{\{z'_R\}}$ is a connected 
component of $\clZ_\eum$. Since the connected
components $\clZ_{z_R}$
and $\clZ_{z'_R}$ are either identical or disjoint,
and $y\in\clZ_{z_R}\cap\clZ_{z'_R}$, it
follows that $z'_R=z_R$, contradiction!  
\end{proof}
\begin{notations/remarks}
\label{nota2}
In Notations/Remarks~\ref{nota0}, let $k\subset K$
be a (relatively algebraically closed) subfield 
with $\td(K|k)=1$ and $\ko\subset k$. 
Then there exists a unique projective normal (or
equivalently regular) $k$-curve $C$ such that $K=k(C)$. The 
closed points $P\in C$ are in canonical bijection 
with the prime divisors $v$ of $K|k$ via $\clO_P=\clO_v$. 
\vskip2pt
Let $f\in K\backslash k$ be given.  Our aim 
in this subsection is to give a criterion\ha2---\ha2which 
for $n=2$ and $\chr\neq2$ turns out to be 
{\bf first-order}\ha2---\ha2to express the following:
\vskip5pt
\centerline{\it The set $\DDf:=\{P\in C\,|\, 
v_P(f)\not\in n\cdot v_P(K)\}$ is non-empty.}
\vskip5pt
\noindent
In order to do so, we supplement the previous 
notations and remarks as follows:
\vskip2pt
\itm{25}{
\item[1)] Let $\bmt:=(t_1,\dts,t_e)$ denote 
transcendence bases of $k|\ko$ such that 
$\ti$ are $n^{\rm th}$ powers in $K$.
\vskip2pt
\item[2)] $\lvAi\subset\lvPi$ are the 
$\So$-affine/projective $\ti$-lines, and set
$\lvAt:=\times_i\,\lvAi\subset\times_i\,\lvPi=:\lvPt$.
\vskip2pt
\item[3)] For $\bma=(a_1,\dots, a_e)\in\ko^e$, 
set $\bmu=(u_i)_i=(\ti-a_i)_i$, and for $\Uo,\bma,f$ 
as above, consider:
\[
\HH_{\Uaf}:=
\big\langle u'_0\cpm u_0\cpm \cppr{u_1}{u_e}\cpm f
  \mid u'_0\in\Delta_{\Uo},u_0\in\ko^\times\big\rangle
\subset\HH^{d+1}\big(K,\lvZ/n(d\,)\big).
\]
}
\end{notations/remarks}
\begin{proposition}
\label{prop2} 
In the above notations, the following are equivalent:
\vskip2pt
\itm{25}{
\item[{\rm i)}] $\DDf$ is non-empty.
\vskip2pt
\item[{\rm ii)}] $\exists\,\Ut\subset\lvAt$ 
Zariski open dense such that $\forall\,\bma\in \Ut(\ko)$, 
$\forall\,\Uo$ one has $\,\HH_\Uaf\neq0$.
\vskip2pt
\item[{\rm iii)}] $\forall\,\Ut\subset\lvAt$ 
Zariski open dense $\exists\,\bma\in \Ut(\ko)$ 
such that $\forall\,\Uo$ one has $\,\HH_\Uaf\neq0$.
}
\end{proposition}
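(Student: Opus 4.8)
\emph{Strategy.} The equivalence $(ii)\Leftrightarrow(iii)$ is soft; the content is $(i)\Leftrightarrow(ii)$, and the idea is to relate $\DDf$ to the groups $\HH_\Uaf$ by residuing along the generic ``flag'' $t_1=a_1,\dots,t_e=a_e$ (here $e=d-2$, since $\ko$ is a global field and so $\dim\ko=1$): this cuts the degree-$(d+1)$ cohomology of $K$ down to $\HHe3$ of a $2$-dimensional residue field, where the analogue of the proposition is the case $\dim=2$ treated in \nmnm{Pop}~[P4], and where the higher Hasse principles enter via Proposition~\ref{prop-compat}. For $(ii)\Rightarrow(iii)$: $\ko$ is an infinite global field, so $\lvAt(\ko)$ is Zariski-dense in $\lvAt$; given any dense open $\Ut$ and the dense open $\Ut'$ provided by $(ii)$, pick $\bma\in(\Ut\cap\Ut')(\ko)$ and apply $(ii)$ to $\bma\in\Ut'(\ko)$.

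\emph{$(i)\Rightarrow(ii)$.} Fix $P_0\in\DDf$ and let $w_0=v_{P_0}$, the corresponding prime divisor of $K|k$. As $w_0$ is trivial on $k$ we have $w_0(t_i-a_i)=0$, while $c:=v_{P_0}(f)\notin n\lvZ$; hence for a generator $\bm\alpha=u'_0\cpm u_0\cpm\cppr{u_1}{u_e}\cpm f$ of $\HH_\Uaf$ the residue formula of~\S2 gives $\partial_{w_0}(\bm\alpha)=\pm c\,\bigl(u'_0\cpm u_0\cpm\cppr{(t_1-a_1)}{(t_e-a_e)}\bigr)|_{\kappa(w_0)}$, a class in $\HHx{d}{\kappa(w_0)}{d-1}$. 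Now $\kappa(w_0)$ is finitely generated of Kronecker dimension $d-1$ and $t_1,\dots,t_e$ is a transcendence basis of it over $\ko$. Choosing $\Ut$ to be the dense open over which, for $\bma\in\Ut(\ko)$, the loci $t_i=a_i$ form a regular flag on a fixed normal projective $\ko$-model of $\kappa(w_0)$ dominating $\lvPt$, the iterated residue along that flag carries $\partial_{w_0}(\bm\alpha)$ to $\pm c\cdot(u'_0,u_0)_n$ in ${}_n\Br(F)$ for some global field $F\supseteq\ko$. A routine Chebotarev and weak-approximation argument in $\ko$ then produces, for the given $\bma$ and $\Uo$, elements $u_0\in\ko^\times$ and $u'_0\in\Delta_{\Uo}$ with $(u'_0,u_0)_n\neq0$ over $F$ (take two places of $\ko$ lying in $\Uo$ and splitting completely in $F$, make $u'_0$ a non-$n$-th power unit there while $\equiv1$ modulo a high power of $n$ outside $\Uo$, and $u_0$ a uniformizer at one of them). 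Then $\partial_{w_0}(\bm\alpha)\neq0$, so $0\neq\bm\alpha\in\HH_\Uaf$.

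\emph{$(iii)\Rightarrow(i)$.} Fix a projective smooth $\Uo$-model $\clZ$ of $K$ dominating $\lvPt$, with generic fibre $Z$ over $\ko$, and let $\Ut$ be the dense open over which, for $\bma\in\Ut(\ko)$, the loci $t_i=a_i$ pull back to prime divisors of $Z$ forming a regular flag whose last stratum $E$ is an integral curve with $2$-dimensional function field $\kappa(E)$, and with $\dv(f)$ disjoint from all of them. For such $\bma$ and any $\Uo$, Proposition~\ref{prop-compat}(1) gives $\HH_\Uaf\hookrightarrow\oplus_{z\in Z^1}\HHx{d}{\kappa(z)}{d-1}$; iterating it along the flag (the hypotheses persist at every intermediate dimension $>2$) yields $\HH_\Uaf\hookrightarrow\oplus\,\HHx{3}{\kappa(E)}{2}$ under which $u'_0\cpm u_0\cpm\cppr{u_1}{u_e}\cpm f\mapsto\pm\,u'_0\cpm u_0\cpm\bar f$, with $\bar f$ the restriction of $f$ to $E$. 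By the $\dim=2$ analogue of \nmnm{Pop}~[P4], this last class is nonzero for some admissible $u'_0,u_0$ exactly when $D_{\bar f}$ on $E$ is non-empty; and for generic $\bma$ this is in turn equivalent to $\DDf\neq\emptyset$, because the failure of $n$-divisibility of $\dv(f)$ on $C$ spreads out over a dense open set of fibres. Hence $(iii)$ forces $(i)$.

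\emph{Main obstacle.} The crux is the converse reduction: Proposition~\ref{prop-compat}(2) only ensures that \emph{some} entry of $(u_0,u_1,\dots,u_e,f)$ ramifies at the detecting divisor, so one must use the genericity of $\bma$ to force the surviving ramification --- after the divisors $t_i=a_i$ are peeled off --- to be that of $f$ at a \emph{geometric} prime divisor of $K|k$; equivalently, one must kill the contributions of the finite group ${\rm Pic}(C)[n]$ and of the non-geometric divisors, where the obstruction is the non-vanishing of the top-degree Galois cohomology of the $(d-1)$-dimensional field $k$ with Tate-twisted finite coefficients. A secondary technical point is the existence, in positive characteristic, of the projective regular flat $\Uo$-models required by Proposition~\ref{prop-compat}: one substitutes for resolution of singularities Gabber's prime-to-$n$ refinement of de Jong's alteration theorem, using that the Kato complex and its exactness are unaffected by such alterations.
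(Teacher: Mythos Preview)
Your treatment of $(i)\Rightarrow(ii)$ and $(ii)\Rightarrow(iii)$ is essentially the paper's, if sketchy. The gap is in $(iii)\Rightarrow(i)$.

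You write that iterating Proposition~\ref{prop-compat}(1) ``along the flag'' yields an injection $\HH_\Uaf\hookrightarrow\oplus\,\HHx3{\kappa(E)}2$. This is not what Proposition~\ref{prop-compat} gives: it injects $\HH_{\Uo,\bm f}$ into $\oplus_{z\in Z^1}\HHx d{\kappa(z)}{d-1}$, the sum over \emph{all} codimension-one points of the generic fibre, not just the flag hypersurface $\{t_i=a_i\}$. For the classes $\bm\alpha=\uop\cpm\uo\cpm\cppr{u_1}{u_e}\cpm f$ the residue is generically nonzero at every component of $\dv(f)$ as well, so the projection to the flag component alone has no reason to be injective; in fact, if it were, you could never detect $\DDf$ this way. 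You correctly name this as the ``main obstacle'' in your last paragraph, but you do not overcome it: the appeal to ``genericity of $\bma$'' to force the ramification onto $f$ is not an argument, and the invocation of $\mathrm{Pic}(C)[n]$ and top-degree cohomology of $k$ does not connect to anything concrete.

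The paper's route is genuinely different from yours. Rather than trying to funnel $\HH_\Uaf$ down the flag, it picks (via Proposition~\ref{prop-compat}) \emph{some} $x_0\in\tlclX^1_0$ with $\partial_{x_0}(\tl{\bm\alpha})\neq0$ and then case-splits on the set $I=\{i:w_0(u_i)\neq0\}$. If $I=\emptyset$ (Case~1), a transcendence-degree count forces $w_0|_K=v_P$ for some $P\in C$, and one reads off $P\in\DDf$ directly. If $I\neq\emptyset$ (Case~2), an argument using that each $t_i$ is an $n$th power in $K$ shows $|I|=1$; one then restricts to the fibre over $a_i$ and reduces to the same problem with $e$ replaced by $e-1$. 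The induction terminates in Case~1, not in a dimension-$2$ base case via~[P4]. A second point you underplay: smoothness does not persist under passage to fibres or to closures of codimension-one points, so a single alteration at the top is not enough. The paper's Construction$_\clI$ builds, in advance and for \emph{every} chain $\clI=(I_\nu)_\nu$, a tower of prime-to-$n$ alterations $\tlclX_\nu$ smooth over suitable $U_\nu$, and the open set $\Ut$ in~$(iii)$ is chosen (independently of~$\Uo$) so that all of these spread out compatibly (Hypothesis~\ref{hypo}). Only after $\bma$ is produced by~$(iii)$ does one shrink $\Uo$ to satisfy~$(\ddag)_{\bmam}$. Your proposal fixes a single smooth $\Uo$-model at the outset, which both inverts the quantifier order and leaves no mechanism for restoring smoothness at each inductive step.
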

\begin{proof} 
To i) $\Rightarrow$ ii): The proof of this 
implication is ``easy'' and requires just standard 
facts. Let $X\to\lvPft$ be the normalization of
$\lvPft:=\lvPf\times\lvPt$ in the field extension 
$\ko(f\!,\bmt)\hra K$. Let $P\in\DDf$ 
be given, hence by mere definitions we have 
$v_P(f)=m\in\lvZ$, and $m$ is not divisible 
by $n$ in $v_PK$. Then the residue map 
$\partial_P:\HH^{d+1}\big(K,\lvZ/n(d)\big)
\to\HH^d\big(\kappa(P),\lvZ/n(d-1)\big)$
restricted to $\HH_\Uaf$ is 
$\partial_P(\uop\cpm \uo\cpm \cppr{u_1}{u_e}\cpm f)=
(-1)^d\ha1m\hb1\cdot\hb1\uop\cpm \uo\cpm \cppr{u_1}{u_e}$.
\vskip2pt
Let $\ko(\bmt)\hra l$ be the separable part of
$\ko(\bmt)\hra\kappa(P)$, and $S_P\to S\to\lvPt$ 
be the normalization of $\lvPt$ in the finite field 
extensions $\kappa(P)\hla l\hla \ko(\bmt)$. Then
$S\to\lvPt$ is \'etale above a dense open subset 
$\Ut\subset\lvAt$. \hbox{Since $\lvAt$ is regular, 
the preimages~$s_{\bmam}\in S$} of 
$\bma=(a_1,\dts,a_e)\in\Ut(\ko)$ are regular 
points, $\bmu:=(u_1,\dts,u_e)=(t_1-a_1,\dts,t_e-a_e)$ is a 
system of regular parameters at $s_{\bmam}$, and  
$\kappa_{\bmam}:=\kappa(s_{\bmam})$ is a finite 
separable extension of $\ko$. Hence letting 
$R,\eum$ be the local ring $\clO_{\!s_{\bmam}},
\eum_{s_{\bmam}}$ of $s_{\bmam}$,  
the $\eum$-adic completion of $R$ is nothing 
but $\wh R=\kappa_{\bmam}[[u_1,\dots,u_e]]$, 
which obviously embeds into
$\wh l:=\kappa_{\bmam}\lps{u_1}\dots\lps{u_e}$.
Since $\ko\hra\kappa_{\bmam}$ is an extension of 
global fields, the image of
$\HH_{\Uo}=\{ \uop\cpm \uo\mid 
(\uop,\uo)\in\Delta_{\Uo}\}$ under
$\,{\rm res}:\HH^2\big(\ko,\lvZ/n(1)\big)\to
\HH^2\big(\kappa_{\bmam},\lvZ/n(1)\big)$
is non-trivial. And if $\uop\cpm \uo$ is non-trivial
over $\kappa_{\bmam}$, and easy induction on $e$ 
shows that $\bm\beta:=\uop\cpm \uo\cpm 
\cppr{u_1}{u_e}$ is non-trivial over $\wh l$, 
thus over $l\subset\wh l$. Finally, since 
$\kappa(P)\,|\,l$ is purely inseparable, $\bm\beta$ 
is non-trivial over $\kappa(P)$, hence  
$0\neq\bm\beta\ha1\cpm\hb1f=\uop\cpm \uo\cpm 
\cppr{u_1}{u_e}\cpm\hb1f\in\HH_\Uaf$.
\vskip5pt
To ii) $\Rightarrow$ iii): Clear!
\vskip5pt
To iii) $\Rightarrow$ i): The (quite involved)  
proof is by induction on $e$.
For every non-empty subset $I\subset\II e:=\{1,\dts,e\}$, 
set $\bmtI=(t_i)_{i\in I}$, $k_I:=\ko(\bmt_I)$, 
and denote $\lvA_I:=\times_{i\in I}\lvAi
\subset\times_{i\in I}\lvPi=:\lvP_I$. For 
$J\subset I\subset\II e$, the canonical projections 
$\lvP_I\srjr\lvP_J$ are open surjective and 
define $k_J\hra k_I$.
\vskip5pt
\noindent
\defi{Construction$\,_\clI$}. Let $\clI=(\II\nu)_\nu$ 
be a chain of subsets of $\II e$ with $|\II\nu|=\nu$,
$\II0=\emptyset$. Setting $k_\nu:=\kIx\nu$, the 
canonical projections 
$\,\lvPt=\lvPIx e\srjr\dts\srjr\lvPIx1$ define  
$\ko(\bmt)=k_e\hla\dts\hla k_1$. Given 
Zariski open dense subsets $\Unu:=\UIx\nu\subset
\lvPIx\nu$ with $\,\Uu e\to\dts\to\Uu1$, and  
 the preimages $\lvPfU\nu\hb3\srjr\lvPU\nu$ 
of $\Unu$ under $\lvPft\srjr\lvPt\srjr\lvPIx\nu$, 
one has canonical open $\ko$-immersions:
\vskip5pt
\centerline{$(*)_\clI\ha{25}$ $\Uu e=\lvPU e\!\!
\hra\dts\hra\lvPU1\hb3\hra\lvPIx e\hb2=\lvPt\,$, \ \ \ 
$\lvPfU e\hb3\hra\dts\hra\lvPfU1\hb3\hra
\lvPft\!=\lvPf\!\times\lvPt\,$. $\ha{20}$}
\vskip5pt
Let $X_0\to\lvPft$ be any projective morphism of 
$\ko$-varieties defining $\ko(f\hb2,\hb1\bmt)\hra K$.
Using \defi{prime to $n$ alterations}, see [ILO], 
Expos\'e X, Theorem~2.1, there is a projective smooth 
$\ko$-variety $\tlX_0$ and a projective surjective 
$\ko$-morphism $\tlX_0\to X_0$ defining a field 
extension $K=:K_0:=\ko(X_0)\hra\ko(\tlX_0)=:\tlK_0$ 
of degree prime to $n$. Proceed as follows: 
\vskip2pt
\underbar{Step 1}. \defi{Generic part$\,_\clI$}.
Let $X_1\to\lvP_{k_1}$ be the generic fiber 
of the $\lvP_{\II1}$-morphisms $\tlX_0\to\lvPt$. 
Then $X_1$ is~a~projective regular $k_1$-variety, 
and $K_1\!:=k_1(X_1)=\tlK_0$. By [ILO], 
loc.cit., there is a projective smooth $k_1$-variety 
$\tlX_1$ and a projective $k_1$-morphism 
$\tlX_1\to X_1$ defining a field~extension 
$K_1=k_1(X_1)\hra k_1(\tlX_1)=:\!\tlK_1$ 
of degree prime to $n$. Inductively on~$\nu$, 
for the already constructed projective 
smooth $k_\nuo$-variety $\tlX_\nuo$, let 
$X_\nu\!\to\lvP_{k_\nu}$~be~the~generic 
fiber of the $\lvP_{I_\nu}$-morphism
$\tlX_\nuo\hb2\to\!\lvP_{k_\nuo}$, thus 
$X_\nu$ is a projective~regular~$k_\nu$-variety.~%
By~[ILO], loc.cit., there is a projective smooth 
$k_\nu$-variety $\tlX_\nu$ and a projective 
$k_\nu$-morphism $\tlX_\nu\to X_\nu$ 
such that the field extension $K_\nu=k_\nu(X_\nu)\hra 
k_\nu(\tlX_\nu)=:\tlK_\nu$ has degree prime to $n$.
\vskip2pt
Notice that the generic fibers
$\tlC_\nu\to C_\nu\to\lvP_{\!f\!,k_e}$ of 
the $\lvP_{k_\nu}$-morphisms 
$\tlX_\nu\to X_\nu\to\lvP_{f\!,k_\nu}$ 
are finite morphisms of projective regular 
$k_e$-curves, hence flat morphisms.
\vskip4pt
\underbar{Step 2}. \defi{Deformation part$\,_\clI$}.
Since being a projective smooth and/or
finite flat morphism is an open condition 
on the base, there are Zariski dense open subsets 
$\Unu\subset\lvA_{\II\nu}\subset\lvP_{\II\nu}$
such that~$(*)_\clI$ above holds, 
there are {\it projective smooth $\Unu$-varieties\/}
$\tlclX_\nu$, $1\leqslant\nu\leqslant e$, such that
$\tlclX_\nu\!\to \Unu$ has as generic fiber
the projective smooth $k_\nu$-variety $\tlX_\nu$.
And setting $\tlclX_0:=\tlX_0$, and 
$\clX_\nu\hb2:=\tlclX_\nuo\!\times_{\lvP_{\II e}}
\hb2\lvP_{\Unu}$ for $1\leqslant \nu\leqslant e$, 
there is a projective morphism of $\Unu$-varieties 
$\,\tlclX_\nu\to\clX_\nu$ with generic fiber 
$\tlX_\nu\!\to\! X_\nu$, thus defining the field
embedding $\tlK_\nu\hla K_\nu$ of degree prime to 
$n$ degree. Note that~\defi{Construction}$\ha1_\clI$ 
realizes the morphisms above in dependence on 
$\clI=(\II\nu)_\nu$, that is, one should rather speak 
about $\tlclX_{\clI,\nu}\to\clX_{\clI,\nu}\to U_{\clI,\nu}$, 
etc. One the other hand, for all sufficiently small 
Zariski open dense subsets $U\subset\lvA_{\II e}$ 
the following is satisfied:
\vskip5pt
\centerline{$\ha7(\dag)\ha{10}$ {\it For all
$\,\clI=(\II\nu)_{1\leqslant\nu\leqslant e}$ one has: 
$U\subset U_{\clI,e}$ and $\tlclX_{\clI,\nu}
\hb2\to\!\clX_{\clI,\nu}\hb2\to\!\lvP_{U_{\clI,\nu}}$ 
are flat above $U$.$\ha{10}$}}
\vskip5pt
\noindent
From now on $\Ut:=U\subset\lvA_{\II e}$ always satisfies 
condition $(\dag)$, and we replace $U_{\clI, \nu}$ 
by the image $\Unu:=U_{\II\nu}$ of $\,U$ under 
$\lvP_{\II e}\to\lvP_{\II\nu}$, etc. Hence the objects 
above satisfy the following:
\begin{hypot}
\label{hypo}
$\Ut\subset\lvA_{I_e}$ is a Zariski open subset 
such that for all $\clI=(\II\nu)_{1\leqslant\nu\leqslant e}$, 
and the resulting open surjective projections
$U=U_e\srjr\dts\srjr U_1$, and the open 
immersions $\Uu{}=\lvPU{}\hra\dts\hra
\lvPU1\hb3\hra\lvPIx e\hb2=\lvPt$ and 
$\lvPfU{}\hra\dts\hra\lvPfU1\hb3\hra\hb1
\lvPft\!=\lvPf\!\times\lvPt$, there are\ha1/\ha1one has:
\itm{25}{
\item[1)] Projective smooth $U_\nu$-varieties 
$\tlclX_\nu$ generic fiber the projective smooth 
$k_\nu$-variety $\tlX_\nu$.
\vskip2pt
\item[2)] Projective morphisms of $\Unu$-varieties 
$\,\tlclX_\nu\to\clX_\nu$ 
defining the field embedding $\tlK_\nu\hla K_\nu$, 
where $\clX_\nu\hb2:=\tlclX_\nuo\!
\times_{\lvP_{\II e}}\hb2\lvP_{\Unu}$, and
$\tlclX_0:=\tlX_0$, thus a canonical morphism
$\tlclX_\nu\to\tlclX_{\nu-1}$.
\vskip2pt
\item[3)] Setting $\tlclX_{\nu,\ha1U}:=
\tlclX_\nu\hb2\times_{\lvP_{\Unu}}\hb3U\!$, 
one has projective flat $U$-morphisms of regular 
$U$-curves:
\vskip2pt
\centerline{ 
{\it $\tlclX_{e,U}\to\dts\to\tlclX_{1,U}\to
\tlclX_{0,U}\to\lvP_{\!f\!,U}\,$ defining \ 
$\tlK_e\hla\dts\hla\tlK_1\hla\tlK_0\hla\ko(f\!,\bmt)$.}}
}
\end{hypot}
\noindent
In particular, for all $\clI$ as above, and  
$\bma:=(a_1,\dts,a_e)\in U(\ko)$,
$\bma_\nu:=(a_i)_{i\in \II\nu}\in \Unu(\ko)$,
the fibers $\tlclX_{\bmam_\nu}$ of $\tlclX_\nu\to \Unu$ 
at $\bma_\nu\in\Uu\nu$ is a projective smooth 
$\ko$-varieties. Hence if $\Uo\subset\So$ is a 
sufficiently small Zariski open dense subset
(depending on $\bma$), one has:
\vskip5pt
\centerline{$(\ddag)_{\bmam}$ \ {\it For all 
$\,\clI=(\II\nu)_\nu$, $\tlclX_0$ and all the 
$\ko$-varieties $\tlclX_{\bmam_\nu}$ have 
projective smooth $\Uo$-models.\/}}
\vskip7pt
\noindent
Returning to the proof of implication iii) 
$\Rightarrow$ i), we proceed by proving:
\begin{lemma}
\label{basic-lemma}
Under Hypothesis~\ref{hypo}, let $\bma\in\Ut(\ko)$,
and $\Uo$ satisfy $(\ddag)_{\bmam}$. Then for each  
$\,0\neq\bm\alpha\in\HH_\Uaf$, there is $P\in C$ 
such that $\partial_P(\bm\alpha)$ is non-trivial, and 
$v_P(f)\not\in n\!\cdot\! v_P(K)$. 
\end{lemma}
\noindent
{\it Proof of Lemma~\ref{basic-lemma}.\/} The
proof is by induction on $e$ as follows: Since 
$[\tlK_0:K]$ is prime to $n$, the restriction 
$\res\!:\!\HH^{d+1}\big(K,\lvZ/n(d\ha1)\big)\to
 \HH^{d+1}\big(\tlK_0,\lvZ/n(d\ha1)\big)$ is injective, 
hence $\tl{\bm\alpha}:=\res(\bm\alpha)$ 
is nontrivial over $\tlK_0$. Since $\tlclX_0$ has
a projective smooth $\Uo$-model, by 
Proposition~\ref{prop-compat} there is a 
point $x_0\in\tlclX_0$ with $\codim_{\tlclX_0}(x_0)=1$
and $\bm\beta:=\partial_{x_0}(\tl{\bm\alpha})$ is
nontrivial in $\HH^d\big(\kappa(x_0),\lvZ/n(d\hb1-\!1)\big)$.
Hence the prime divisor $w_0:=w_{x_0}$ of $\tlK_0|\ko$
satisfies:
\vskip5pt
\underbar{Case 1}. $w_0(u_i)=0$ for all $i=1,\dts,e$. 
Then setting $u_i\mapsto\oli u_i$ under 
$\clO_{w_0}=\clO_{x_0}\to\kappa(x_0)$, it follows that
$\partial_{x_0}(\tl{\bm\alpha})=u'_0\cpm u_0\cpm 
\cppr{\oli u_1}{\oli u_e}\neq0$ in 
$\HH^d\big(\kappa(x_0),\lvZ/n(d-1)\big)$. Hence 
$\oli u_1,\dts,\oli u_e$ must be algebraically
independent over $\ko$, or equivalently, $w_0$
must be trivial on $\ko(u_1,\dts,u_e)$. Thus
$w_0|_K=v_P$ for some $P\in C$ such that
$\partial_P(\bm\alpha)\neq$ over $\kappa(P)$,
and $v_P(f)\not\in n\hb1\cdot\hb1v_PK$ (because 
$w(f)\not\in n\hb1\cdot\hb1w\tlK_0$). Thus 
finally implying that $\DDf$ is non-empty, 
as claimed.
\vskip5pt
\underbar{Case 2}. The set 
$I:=\{i\,|\,w_0(u_i)\neq0\}$ is non-empty. 
Recalling that $t_i$ is an $n^{\rm th}$ power
in~$K$, say $t_i=t^n$, and $u_i=\ti-a_i$, 
we claim that $w_0(\ti)=0$. Indeed, by 
contradiction, let $w_0(\ti)\neq0$. First, if
$w_0(\ti)>0$, then $\oli u_i=a_i$, hence 
$\uop\cpm\uo\cpm a_i
\in\HH^3\big(\ko,\lvZ/n(2)\big)=0$ is a 
sub-symbol of $\partial_{x_0}(\tl{\bm\alpha})$,
implying that $\partial_{x_0}(\tl{\bm\alpha})=0$, 
contradiction! Second, if $w_0(\ti)<0$, then $\ti-a_i=
\ti(1-a_i/\ti)=t^n u'_i$ with $u'_i\in1+\eum_{w_0}$. 
Hence $\oli u'_i=1$, and $\bm\alpha=\bm\alpha'$,
where the latter symbol is obtained by replacing
$u_i$ by $u'_i$ in the symbol $\bm\alpha$. 
Then $\uop\cpm\uo\cpm1=0$ is a sub-symbol 
of $\partial_{x_0}(\tl{\bm\alpha})$, thus 
$\partial_{x_0}(\tl{\bm\alpha})=0$, contradiction! 
Thus conclude that $w_0(\ti)=0$, hence
$w_0(t_i-a_i)\neq0\,\Rightarrow\,w_0(t_i-a_i)>0$. 
In particular, replacing $f$ by $f^{-1}$ if 
necessary, w.l.o.g., $w_0(f)\geqslant0$. Thus 
finally $u_1,\dts,u_e,f\in\clO_{x_0}$, and 
$u_i\in\eum_{x_0}$ iff $i\in I$.
\vskip2pt
Consider the images
$x_0\mapsto x_{f\!,\bmam_I}\mapsto x_{\bmam_I}
\mapsto\bma_I:=(a_i)_{i\in I}$ under 
$\tlclX_0\to\lvPft\to\lvP_{\II e}\to\lvP_I$.
Then $\bma_I\in \lvP_{U_I}(\ko)$, and 
$x_{\bmam_I}\in\lvPt$ is defined by $\bmt_I=\bma_I$,
hence $\td\big(\kappa(x_{\bmam_I})|\ko\big)\leqslant e-|I|$.
Further, $x_{\bmam_I}\in U$ (because $x_{\bmam_I}$
is a generalization of $\bma$).
Hence $e=\td\big(\kappa(x_0)|\ko\big)$, 
together with $\tlclX_0\to\lvPft\to\lvPt$ being flat 
at $x_0\mapsto x_{f\!,\bmam_I}\mapsto 
x_{\bmam_I}\in U_e$, imply:
\vskip5pt
\centerline{$e-1=\td\big(\kappa(x_0)|\ko\big)-1=
\td\big(\kappa(x_{f\!,\bmam_I})|\ko\big)-1=
\td\big(\kappa(x_{\bmam_I})|\ko\big)
\leqslant e-|I|$, \ hence $|I|=1$,}
\vskip5pt
\noindent
and $\td\big(\kappa(x_{f\!,\bmam_I})|\ko\big)-1=
\td\big(\kappa(x_{\bmam_I})|\ko\big)$ implies
$f\notin\eum_{x_{f\!,\bmam_I}}\hb3\subset\eum_{x_0}$, 
thus $f\!,u_i\in\clO_{x_0}^\times$, $i\notin I$.
Reasoning as in~Case~1), the non-triviality
of $\partial_{x_0}(\tl{\bm\alpha})$ implies that 
the residues $\oli f\!, \oli u_i\in\kappa(x_0)$ 
of $f\!, u_i$, $i\notin I$ are algebraically 
independent over $\ko$. Equivalently, 
the residues $\oli f\!,\oli{\ha1t\ha1}\hb1_i$, 
$i\notin I$ are algebraically independent over 
$\ko$, hence $x_{\!f\!,\bmam_I}\in\lvPft$ is the 
generic point of the fiber of $\,\lvP_{\!f\!,\bmtm}
\to\lvP_I$ at $\bma_I\in\lvP_I(\ko)$. Since 
$x_0\!\mapsto\! x_{f\!,\bmam_I}\hb2\mapsto\! 
x_{\bmam_I}\hb2\mapsto\bma_I$ under 
$\tlclX_0\!\to\lvPft\!\to\!\lvP_{\II e}\!\to\lvP_I$, 
one has:
\vskip5pt
\centerline{$(*)\ha{20}$ {\it $x_0$ is a generic 
point of the fiber $\tlclX_{0,\bmam_I}$ of 
$\tlclX_0\to\lvP_I$ at $\bma_I\in U_I(\ko)$.$\ha{30}$}} 
\vskip5pt
After renumbering $(t_1,\dts,t_e)$, 
w.l.o.g., $I=\{e\}$. Considering all the chains 
$\clI=(I_\nu)_\nu$ with $I_1=\{e\}$, and 
viewing $\tlclX_\nu\hb2\to\lvP_{\Unu}$ and 
$\tlclX_\nu\hb2\to\clX_\nu\hb2
\to\lvP_{\!f\!,\Unu}$ as $U_1$-morphisms via 
$\Unu\to U_1$, let 
$\tlclX_{\nu,\bmam_1}\to \clX_{\nu,\bmam_1}
\to \lvP_{U_\nu,\bmam_1}\to U_{\nu,\bmam_1}$ 
be the fibers of $\tlclX_\nu\to\clX_\nu\to
\lvP_{U_\nu}\to U_\nu$ at $\bma_1\in U_1(\ko)$.  
Let $w_0$ is the prime divisor of $\tlK_0$ defined 
by $x_0\in\tlclX_0$. Then Hypothesis~\ref{hypo},~3)
implies: The sequences  
$(w_\nu)_{0\leqslant\nu\leqslant e}$ of 
prolongations of $w_0$ to the tower
of extensions $(\tlK_\nu)_\nu$ satisfying
$w_{\nu+1}|_{\tlK_\nu}=w_\nu$ 
are~in~canonical bijection with the sequences 
$(x_\nu)_{0\leqslant\nu\leqslant e}$ of 
generic points $x_\nu\in\tlclX_{\nu,\bmam_1}$ with 
$x_{\nu+1}\mapsto x_\nu$ for all $0\leqslant\nu<e$.
Hence denoting $e_{\nu+1}:=e(w_{\nu+1}|w_\nu)$, 
$f_{\nu+1}:=f(w_{\nu+1}|w_\nu)=
[\kappa(x_{\nu+1}):\kappa(x_\nu)]$ the
ramification index, respectively the residue 
degree of $w_{\nu+1}|w_\nu$, one has: Since
$[\tlK_{\nu+1}:\tlK_\nu]$ is prime to $n$, by the 
fundamental equality, for every $w_\nu$ there 
is a prolongation $w_{\nu+1}$ with 
$e_{\nu+1}f_{\nu+1}$ prime to~$n$. We 
will call such sequences $(x_\nu)_\nu$ and/or 
$(w_\nu)_\nu$ \defi{prime to $n$ compatible}.
Notice that letting $\tlclX_{x_\nu}\subset
\tlclX_\nu$ be the Zariski closure of $x_\nu$,
the morphism $\tlclX_{\nu+1}\to\tlclX_\nu$
gives rise to a morphism $\tlclX_{x_{\nu+1}}
\to\tlclX_{x_\nu}$ defining the finite extension
$\tlL_\nu:=\kappa(x_\nu)\hra\kappa(x_{\nu+1})
=:\tlL_{\nu+1}$ of degree $f_{\nu+1}$ prime 
to $n$. Further, $\tlclX_{x_1}$ is an irreducible 
component of the projective smooth 
$\ko$-variety~$\tlclX_{\bmam_1}=\tlclX_{1,\bmam_1}$, 
hence $\tlclX_{x_1}$ is itself a projective smooth 
$\ko$-variety. And by assumption~$(\ddag)_{\bmam}$,
$\tlclX_{x_1}$ has a projective smooth $\Uo$-model. 
Finally, since $\tlL_0=\kappa(x_0)\hra\kappa(x_1)=\tlL_1$ 
has degree prime to $n$, and 
$\bm\beta=\partial_{x_0}(\tl{\bm\alpha})$
is non-trivial over $\tlL_0=\kappa(x_0)$, it follows that 
$\bm\beta_1=\res(\bm\beta)$ is non-trivial over 
$\tlL_1$. Hence by Proposition~\ref{prop-compat}, 
there is a point $y\in\tlclX_{x_1}^1$ such that 
$\partial_y(\bm\beta_1)\neq0$. Let $\tlclX_1(y)
\subset\tlclX_1^1$ be the set of points $x'_1\in\tlclX_1$
with $\codim_{\tlclX_1}(x'_1)=1$ and 
$y\in\oli{\{x'_1\}}$. Then reasoning as in the 
proof of Proposition~\ref{prop-compat},
it follows that there is $x'_1\in\tlclX_1(y)$ 
such that, first, $\partial_{x'_1}(\tl{\bm\alpha})\neq0$ 
over $\kappa(x'_1)$, and second, $x'_1$ is not
contained in any fiber $\tlclX_{\bmam'_1}$
of $\tlclX_1\to U_1$. Equivalently, $x'_1$ lies 
in the generic fiber $X_1$ of $\tlclX_1\to U_1$, 
and the prime divisor $w'_1$ of $\tlK_1|\ko$ 
defined by $x'_1$ is trivial on~$k_1$.
\vskip5pt
\underbar{Case} $e=1$: \ One has
$\bmt=(t_1)$, hence $k_1=\ko(\bmt)$.
Therefore $w'_1|_K=v_P$ for some $P\in C$, 
and conclude by reasoning as in Case~1 above.
\vskip5pt
\underbar{Case} $e>1$: \ Setting $\mu\hb2:=\nu-1$,
$\JJ\mu\hb2:=I_\nu\backslash I_1$, the inclusions
$\Vu\mu\hb2:=U_{\nu,\bmam_1}\hb2 \subset
\lvP_{\nu,\bmam_1}\hb2=\lvP_{\JJ\mu}$
are~open immersions, where $V_0:=\Spec\ko=:\lvP_{\!\JJ0}$. 
Given a prime to $n$ compatible sequence
$(x_\nu)_{0\leqslant\nu\leqslant e}$, for every
$\nu>0$ we set: $\tlclY_\mu:=\tlclX_{x_\nu}
\subset\tlclX_{\nu,\bmam_1}$, and recall that
$\tlL_\mu=\ko(\tlclY_\mu)=\kappa(x_\nu)$. 
Since $\tlclX_\nu\to U_\nu$ is projective smooth, 
its fiber $\tlclX_{\nu,\bmam_1}\to U_{\nu,\bmam_1}$ 
is projective smooth, hence so is its irreducible 
component $\tlclY_\mu\subset\tlclX_{\nu,\bmam_1}$. 
Further, since $\tlclX_\nu\to\clX_\nu=
\tlclX_{\nu-1}\times_{\lvPt}\lvP_{U_\nu}$ is
a projective $U_\nu$-morphism, the transitivity
of base change implies that 
$\clY_\mu:=\clX_{\nu,\bmam_1}=\tlclY_{\mu-1}
\times_{\lvPtp}\lvP_{V_\mu}$, and the resulting
$V_\mu$-morphism $\tlclY_\mu\to\clY_\mu$ is
projective defining $\tlL_\mu\hla\tlL_{\mu-1}$.
Therefore Hypothesis~\ref{hypo} holds 
{\it mutatis mutandis\/} in the context below, 
after replacing $e$ by $e'\!$, namely:
\vskip5pt
For the canonical open projections 
$V:=V_{e'}\srjr\dts\srjr V_1$ and the resulting 
open immersions $V_{e'}=\lvP_{V_{e'}}\!\hra\!\dts\!
\hra\lvP_{V_1}\!\hra\lvP_{\JJ{e'}}$ and 
$\lvP_{\!f\!,V_{e'}}\!\hra\dts\hra\lvP_{\!f\!,V_1}\!\hra
\lvP_{\!f\!,\JJ{e'}}$, the following hold:
\vskip2pt
\itm{25}{
\item[1)$'$] $\,\tlclY_\mu\to V_\mu$ is a 
projective smooth integral $V_\mu$-variety of 
dimension $e'+1-\mu$ for $1\!\leqslant\!\mu\!\leqslant\! e'\hb1$.
\vskip2pt
\item[2)$'$] $\,\tlclY_\mu\to\clY_\mu:=\tlclY_{\mu-1}
\times_{\lvP_{\!\JJ{e'}}}\hb2\lvP_{V_\mu}$ is a 
projective surjective $V_\mu$-morphism for 
$1\!\leqslant\!\nu\!\leqslant\! e'\hb1$.
\vskip0pt
\item[3)$'$] Setting $\tlclY_{\mu,V}:=
\tlclY_\mu\times_{\lvP_{\JJ\mu}}\hb3V$ for 
$0\leqslant\mu\leqslant e'\!$, one gets canonically
a sequence of projective surjective flat morphisms 
of $V$-curves $\tlclY_{e'\!,V}\to\dts\to\tlclY_{1,V}\to
\tlclY_{0,V}\to\lvP_{\!f\!,V}\,$.
}
\vskip2pt
Further, $\bma':=(a_1,\dts,a_{e'})\in V(\ko)$,
$\bma'_\mu:=(a_i)_{i\in \JJ\mu}\in \Vu\mu(\ko)$, and
the fiber $\tlclY_{\bmam_\mu}$ of $\,\tlclY_\mu\to \Vu\mu$ 
at $\bma'_\mu\in\Vu\mu$ is a projective smooth $\ko$-variety. 
Moreover, since each $\tlclY_{\mu,\bmam'_\mu}$ is an 
irreducible component of the projective smooth
$\ko$-variety $\tlclX_{\nu,\bmam_\nu}$, the 
condition~$(\ddag)_{\bmam}$ above implies:
\vskip5pt
\centerline{$(\ddag)_{\bmam'}$ \ {\it For all 
$\,\clJ=(\JJ\mu)_\mu$, $\tlclY_0$ and all the 
$\ko$-varieties $\tlclY_{\bmam'_\mu}$ have 
projective smooth $\Uo$-models.\/}}
\vskip7pt
\noindent
In particular, since $e'<e$, we can apply the induction 
hypothesis for $\bm\beta=\partial_{x_0}(\tl{\bm\alpha})$
as follows: Recall that  the canonical
projective morphism $\tlclY_0:=\tlclX_{1,\bmam_1}\to 
\tlclX_{0,\bmam_1}=:Y_0$ is the fiber of
$\tlclX_1\to\tlclX_{0,U_1}$ at $\bma_1\in U_1(\ko)$,
and the corresponding extension $L_0=\kappa(x_0)
\hra\kappa(x_1)=\tlL_0$ has degree $f(x_1|x_0)$ 
prime to $n$. Hence the image 
$\tl{\bm\beta}:=\res(\bm\beta)$ of $\bm\beta$ 
over $\tlL_0$ is non-trivial. Let $C'$ be the 
generic fiber of $\tlclY_{0,V}\to V$, hence 
of $\tlclY_0\to\lvPtp$, and set $\bmu':=\bmt'-\bma'
=(t_i-a_i)_{1\leqslant i\leqslant e'}$. Then $C'$ 
is a projective regular $\ko(\bmt')$-curve, and the 
images $(f\!,\bmu')\mapsto(\oli f\!,\oli{\bmu}')
\mapsto(f'\hb4,\bmu')$ under $\clO_{x_0}\to\kappa(x_0)
\hra\tlL_0$ satisfy: $\tl{\bm\beta}=\uop\cpm \uo\cpm 
\cppr{u'_1}{u'_{e'}}\cpm f'\!$. Hence by the 
induction hypothesis, there is $P'\in C'$ such 
that $\partial_{P'}(\tl{\bm\beta})\neq0$ and 
$v_{P'}(f')\notin n\cdot v_{P'}(\tlL_0)$. 
In particular, $y:=P'\in C'\subset\tlclY_0$ is a point 
of codimension one. Recalling that 
$\tlclY_0=\tlclX_{1,\bmam_1}=\tlclX_{\bmam_1}$ 
is the fiber of $\tlclX_1$ at $\bma_1\in\ U_1(\ko)$, it 
follows that $y\in\tlclX_1$ is a point of codimension two.
By the discussion before Case~$e=1$ above, there exists 
$x'_1\in\tlclX_1(y)$ with  $\partial_{x_1}(\tl{\bm\alpha})\neq0$
and $\ko(t_e)\subset\clO_{x'_1}$. On the other hand, 
$\ko(\bmt')\subset\kappa(y)\subset\kappa(x_1)$,
hence finally, $\bmt=(\bmt'\!,t_e)$ consists of 
$w_{x_1}$-units. Conclude as in Case~1.
\end{proof}
%
%
%
\noindent
B) \ {\it A criterion for $\DDf$ to be non-empty\/}
\vskip4pt
\begin{notations/remarks} 
\label{nota3}
Recalling that~$\vo$ denotes the finite places 
of $\ko$, we supplement the previous 
Notatations/Remarks~\ref{nota1} as  follows. For 
every $\delta,\delta_1,\dots,\delta_e\in\ko^\times$ 
we define:
\vskip5pt
\itm{25}{
\item[1)] $U_\delta:=\{a\in\ko^\times\mid
\vo(\delta)\neq0\hbox{ or } \vo(n)>0 \Rightarrow
\vo(a-1)>2\cdot\vo(n)\ \forall\ \vo\}$, the 
\defi{$n,\!\delta$-units}.
\vskip3pt
\item[2)] $\Sod=\{a\in\ko^\times\mid \vo(\delta)\neq0
\Rightarrow \vo(a)\neq0\}$, the \defi{non-$\delta$-units}.
We notice the following:
\vskip2pt
\itm{15}{
\item[a)] $U_\delta$ is a subgroup of 
$\ko^\times\!$, and $\Sod$ is a $U_\delta$-set, 
i.e., $U_\delta\cdot\Sod=\Sod$. 
\vskip2pt
\item[b)] For every finite set $A\subset\ko$ 
there exist ``many''~$\delta\in\ko^\times$ such 
that $A\cap\Sod=\vid$.
}

\vskip4pt
\item[3)] $U^{^\bullet}_{\!\delta}:=
U_\delta\times\ko^\times\!$, and \ 
$\bm\Sigma_{\bm\delta}:=\times_i\,\Sigma_{\delta_i}$ for 
$\bm\delta:=(\delta_1,\dots,\delta_e)$.
\vskip2pt
\item[4)] For $\bma=(a_i)_i 
\in\bm\Sigma_{\bm\delta}$, set $\bmu:=\bmt-\bma$, 
and for $\delta_0\in\ko^\times\!$ 
and $\bmu^\nxx_0\!:=(u'_0,u_0)\in U^{^\bullet}_{\delta_0}$, 
define
\vskip5pt
\centerline{$\bm\alpha_{\bmum_0^\nxx,\bmam,f}\!:=
\uop\cpm\uo\cpm\cppr{u_1}{u_e}\cpm\!f 
\in \HH^{d+1}\big(K,\lvZ/n(d)\big)$}
\vskip5pt
\noindent
and consider the subgroup $\,\HH_{\delta_0,\bmam,f}:=
\big\langle\bm\alpha_{\bmum_0^\nxx,\bmam,f}
\mid\bmu^\nxx_0\in U^{^\bullet}_{\delta_0}\,\big\rangle
\subset\,\HH^{d+1}\big(K,\lvZ/n(d)\big)$.
}
\end{notations/remarks}
%
%
\begin{keylemma}
\label{keylmm}
In the above notations, the following are equivalent:
\vskip2pt
\itm{25}{
\item[{\rm i)}] $\DDf$ is non-empty.
\vskip2pt
\item[{\rm ii)}]  $\exists\,\delta_1\ha2\forall\ha1
a_1\!\in\!\Sigma_{\delta_1}\dts\exists\,\delta_e
\ha2\forall\,a_e\!\in\hb1\Sigma_{\delta_e}\forall\,
\delta_0\,\exists\,(\uop,\uo)\!\in\!U^{^\bullet}_{\delta_0}$ such that 
$\,\bm\alpha_{\bmum^{\phantom.}_0,\bmam,f}\neq0$.
}
\end{keylemma}
\begin{proof} The proof follows easily from
Proposition~\ref{prop2} along the following 
lines:
\vskip2pt
To i) $\Rightarrow$ ii): Given i), by 
Proposition~\ref{prop2}, $\exists\,\Ut\subset\lvAt$ 
Zariski open dense s.t.\ $\,\HH_\Uaf\neq0$ for
all $\bma\in \Ut(\ko)$, $\Uo\subset\So$.
Set $\bmt_i=(t_1,\dts,t_i)$. Then  
$\varpi_i:\lvAt\to\lvAti$~def\-ined by 
$\ko[\bmt_i]\hra\ko[\bmt]$ are Zariski open maps. Hence 
$\Sigma_i:=\varpi_i\big(\Ut(\ko)\big)\subset\ko^i$
is Zariski open dense,~and~one~has:
\vskip2pt
\itm{25}{
\item[a)] If $\bma_i\in\Sigma_i$, and 
${\Ut}_{\hb1,\bmam_i}\subset{\lvAt}_{\hb1,\bmam_i}
\hra\lvAt$ are the fibers of $\Ut\subset\lvAt$ at 
$\bma_i$ under $\varpi_i:\lvAt\to\lvAti$, then 
at the level of $\ko$-rational points, one has 
canonical identifications
\vskip5pt
\centerline{${\Ut}_{\hb1,\bmam_i}(\ko)\subset
{\lvAt}_{\hb1,\bmam_i}(\ko)=\varpi_i^{-1}
(\bma_i)=\bma_i\times\ko^{(e-i)}\subset\ko^e=\lvAt(\ko)$.}
\vskip5pt
\item[b)] In particular, ${\Ut}_{\hb1,\bmam_i}(\ko)\subset
\bma_i\times\ko^{(e-i)}$ is a Zariski open dense subset for
all $\bma_i\in U_i$.
}
\vskip3pt
\noindent
Proceed by induction on $i=1,\dots,e$ as follows:
\vskip3pt
\underbar{Step 1}. $i=1$: Since 
$\Sigma_1:=\varpi_1\big(\Ut(\ko)\big)\subset\ko$ 
is Zariski open dense, $A_1:=\ko\backslash\Sigma_1$ 
is finite. Hence $\exists$ $\delta_1\in\ko^\times$
such that $\Sigma_{0,\delta_1}\cap A_1=\vid$,  
thus $\Sigma_{0,\delta_1}\subset\Sigma_1$. 
Then $\forall$ $a_1\in\bm\Sigma_{\delta_1}$,
set $\bma_1:=(a_1)$.
\vskip3pt
\underbar{Step 2}. $i\,\Rightarrow\,i+1$: Suppose that 
$\bma_i=(a_1,\dots, a_i)\in\varpi_i\big(\Ut(\ko)\big)$ 
is inductively constructed. Let 
$\varpi_{i+1,i}:\lvAtiu\to\lvAti$ be the canonical 
projection. Then $\varpi_i=\varpi_{i+1,i}\circ\varpi_{i+1}$, 
and all the projections involved are open surjective. 
Further, by the discussion above, one has that
$U_{\bmam_i}:={\Ut}_{\hb1,\bmam_i}(\ko)\subset
\bma_i\times\ko^{(e-i)}$ is Zariski open dense,
and therefore $\varpi_{i+1,i}\big(U_{\bmam_i})\subset
\bma_i\times\ko$ is a dense open subset. Hence
there exists $\Sigma_{i+1}\subset\ko$ cofinite
such that $\bma_i\times\Sigma_{i+1}\subset
\varpi_{i+1,i}\big(U_{\bmam_i})$, thus  
$\bma_i\times\Sigma_{i+1}\subset U_{\bmam_i}
\subset\bma_i\times\ko$ is a Zariski open dense subset. 
In particular, $A_{i+1}:=\ko\backslash\Sigma_{i+1}$ is finite. 
Hence $\exists$ $\delta_{i+1}\in\ko^\times$ such that 
$\Sigma_{\delta_{i+1}}\cap A_{i+1}=\vid$, and in
particular, $\Sigma_{\delta_{i+1}}\subset\Sigma_{i+1}$.  
Then $\forall$ $a_{i+1}\in\Sigma_{\delta_{i+1}}$,
setting $\bma_{i+1}:=(\bma_i,a_{i+1})$, one has:
$\bma_{i+1}\in\varpi_{i+1}
\big({\Ut}_{\hb1,\bmam_i}(\ko)\big)\subset
\varpi_{i+1}\big(\Ut(\ko)\big)$.
This completes the proof of the induction step,
thus of the implication i) $\Rightarrow$ ii).
\vskip5pt
To ii) $\Rightarrow$ i): \ 
Let $\Ut\subset\lvAt$ be a Zariski dense open
subset. Then condition ii) of the Key 
Lemma~\ref{keylmm} above, implies that there
is $\bma\in\Ut(\ko)$ such that
$\forall\, \Uo\subset\So$ one has $\HH_\Uaf\neq0$.
Hence condition~iii) from Proposition~\ref{prop2} 
is satisfied, concluding that $\DDf\neq\vid$.
\end{proof}
\section{Uniform definability of the geometric 
            prime divisors of $K$} 
\noindent
In this section we work in the context and notation
of the previous sections, but specialize to the
case $n=2\neq\chr$. In particular, for 
$\bma:=(a_1,\dts,a_r)$ with 
$a_i\in K^\times\!$, by the Milnor Conjecture, 
$a_1\cpm\dts\cpm a_r\in \HH^r\big(K,\lvZ/n(r-1)\big)$
is trivial iff $0\in q_{\bmam}(K)$. Therefore:
\vskip5pt
\centerline{\it $a_1\cpm\dts\cpm a_r=0$ 
is first-order expressible by
$\exists$ $(x_1,\dts,x_{2^r})\neq0$ s.t.\ 
$q_{\bmam}(x_1,\dts,x_{2^r})=0$.\/}
\vskip5pt
Let $K$ satisfy Hypothesis~(H),
$\ko\subset K$ be a (relatively algebraically 
closed) global subfield, $e=\td(K|\ko)-1>0$, 
$(f\!, t_1,\dots,t_e)$ be algebraically independent 
functions over $\ko$, such that each $t_i$ is an
$n^{\rm th}$ power in $K$. Then $K$ is the 
function field of a projective normal curve 
$C$ over $\ko(\bmt)$, and $f$ is a non-constant 
function on $C$. Finally, recalling the context 
of the Key Lemma~\ref{keylmm}, 
let $\bmx:=(x_1,\dts,x_{2^{d+1}})\neq(0,\dts,0)$ 
be a system of $2^{d+1}$ variables, and  consider 
the following uniform first-order formula:
\[
\varphi(\fbmt)\,\equiv\,
\scalebox{1.0}[1.0]{$\exists\,\delta_1\ha2
\forall\ha1a_1\!\in\!\Sigma_{\delta_1}\!\dots\exists\, 
\delta_e\ha2\forall\,a_e\!\in\hb1\Sigma_{\delta_e}\forall\,
\delta_0\,\exists\,(\uop,\uo)\!\in\!U^{^\bullet}_{\delta_0}
\,\forall\,\bmx\hb1:\, q_{\bmum^{\phantom.}_0,\bmum,f}(\bmx)\neq0$.}
\]
%
%
%
\begin{keylemmarev} 
\label{keylmmrev}
In the above notation, the following are equivalent:
\itm{25}{
\item[{\rm i)}] $\DDf$ is non-empty.
\vskip2pt
\item[{\rm ii)}] $\varphi(\fbmt)$ holds in $\tlK=K[\mu_4]$.
}
\end{keylemmarev}
\begin{proof}
As explained above, this is just a reformulation 
of Key Lemma~\ref{keylmm}. 
\end{proof}
\noindent
Our final aim in this section is to show that the prime 
divisors of $K|\ko$ are uniformly first-order definable. 
Precisely, recalling the $(d+1)$-fold Pfister forms 
$q_{\bmum_0,\bmum,f}$ defined using $f\!,\bmt$ 
as introduced above, the Recipe~\ref{therecipe} 
below gives a proof of the following: 
\begin{theorem}
\label{mainthm}
There exist explicit formulae $\val_d$ which
uniformly define the geometric prime divisors of 
finitely generated fields $K$ with $\chr(K)\neq2$ 
and $\,\dim(K)>1$ as follows
\[
\val_d\big(\,\xx\ha1; f\!,\ha1\bmt,\bm\xi, 
\bmdl, (a_i)_i\in\bmSgodudlm, \delta_0, 
\bmu^\nxx_0\in U^{^\bullet}_{\!\delta_0},
q_{\bmum^\nix_0,\bmum,f}(\bm x_{\bm\xi})\big).
\]
\end{theorem}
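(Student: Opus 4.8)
The plan is to read the formula $\val_d$ off the Key Lemma (revisited)~\ref{keylmmrev}, following the \nmnm{Rumely}--\nmnm{Pop} strategy: first replace $K$ by a presentation tailored to the target divisor, then convert the detection of $\DDf$ into a membership test for a valuation ring. So fix a geometric prime divisor $v$ of a finitely generated field $K$ with $\chr(K)\neq2$ and $\dim(K)=d>1$. Using the uniform first-order definability of the maximal global subfields $\ko\subset K$ and of transcendence bases recalled in the Introduction, one chooses $\ko$ and a transcendence basis $(f,t_1,\dots,t_e)$ of $K\mid\ko$ with $e=\td(K\mid\ko)-1$, so that: $v$ is trivial on $\ko$ and on $\ko(\bmt)$ --- i.e.\ the residues $\bar t_i$ stay algebraically independent over $\ko$ --- which is possible since $v$ is geometric and one is free to choose $\ko$ and $\bmt$ adapted to $v$, whence $v=v_P$ for a closed point $P$ of the projective normal curve $C$ over $\ko(\bmt)$ with function field $K$; each $t_i$ is a square in $K$ (which one can always arrange); and $v(f)=1$, so in particular $P\in\DDf$. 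These are finitely many first-order conditions on $(\ko,\bmt,f)$, so one may quantify over such triples inside the eventual formula. For $d=2$ ($e=0$) the remaining work is \nmnm{Pop}~[P4]; for $d>2$ ($e>0$) one invokes Key Lemma (revisited)~\ref{keylmmrev}, which with $n=2$ turns ``$\DDf\neq\emptyset$'' into the sentence $\varphi(\fbmt)$ holding in $\tlK=K[\mu_4]$, the top Milnor symbols occurring in it being, by the Milnor conjecture, the anisotropy of the Pfister forms $q_{\bm u_0,\bm u,f}$ of Notations/Remarks~\ref{nota3}.

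Second --- this is the content of Recipe~\ref{therecipe} --- one upgrades ``$\DDf\neq\emptyset$'' to ``$\xx\in\clO_v$''. The tested element $\xx$ is fed into the top symbol through the auxiliary vector $\bm\xi$: one retains the outer quantifier template $\exists\,\delta_1\forall a_1\!\in\!\Sigma_{\delta_1}\cdots\exists\,\delta_e\forall a_e\!\in\!\Sigma_{\delta_e}\forall\delta_0\exists(\uop,\uo)\!\in\!U^{^\bullet}_{\delta_0}$ of $\varphi$, but replaces its innermost clause ``$\forall\bmx:\,q_{\bm u_0,\bm u,f}(\bmx)\neq0$'' by the $\xx$-twisted clause ``$q_{\bm u_0,\bm u,f}(\bm x_{\bm\xi})\neq0$'', where $\bm x_{\bm\xi}$ is assembled from $\xx$ and $\bm\xi$ so that the associated class in $\HH^{d+1}\big(\tlK,\lvZ/2(d)\big)$ is the $\xx$-twist of the symbol $\bm\alpha_{\bm u_0,\bm a,f}$ whose residue $\partial_P$ at $P$ is non-trivial precisely when $v_P(\xx)\geq0$. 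Exactly as in the proofs of Proposition~\ref{prop2} and Key Lemma~\ref{keylmm}, the outer block makes the $u_i=t_i-a_i$ into units at $P$ for generic $a_i$ and pushes the finitely many competing prime divisors of $C$ --- the members of $\DDf\setminus\{P\}$ --- together with the finitely many degenerate choices of the remaining parameters into Zariski-nowhere-dense loci that are cut out by shrinking the $\Sigma_{\delta_i}$, so that the only residue that ends up mattering is $\partial_P$. Assembling, one obtains the formula $\val_d\big(\xx;f,\bmt,\bm\xi,\bmdl,(a_i)_i\in\bmSgdl{\delta},\delta_0,(\uop,\uo)\in U^{^\bullet}_{\delta_0},q_{\bm u_0,\bm u,f}(\bm x_{\bm\xi})\big)$ of the statement, interpreted in $K$ via the first-order passage to $\tlK=K[\mu_4]$: for the adapted data attached to $v$ one has $\{\xx\in K:K\models\val_d(\xx;\cdots)\}=\clO_v$, while every admissible parameter tuple produces, by the same two Key Lemmas, the valuation ring of a geometric prime divisor, admissibility of a tuple being itself first-order. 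Since every ingredient --- the sentences $\varphi_d$, the predicate $\abs\psi$, the formulae $\psi_r$, the Pfister forms, and the quantifier template (whose length is $e=d-2$) --- is manufactured by an explicit recursion in $d$, so is $\val_d$.

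The main obstacle is this second step: promoting ``some point of $C$ has non-$2$-divisible value of $f$'' to ``the distinguished point $P$ has $v_P(\xx)\geq0$''. Two things must mesh. (a) One must pin down the correct $\xx$-twist of $\bm\alpha_{\bm u_0,\bm a,f}$ --- encoded in the definition of $\bm x_{\bm\xi}$ --- whose $\partial_P$ records the sign of $v_P(\xx)$ rather than merely the parity of $v_P(f)$, and verify that the resulting condition is again the anisotropy of a single Pfister form over $\tlK$, hence first-order; this is the delicate cohomological input. (b) One needs a version of Proposition~\ref{prop2}\,/\,Key Lemma~\ref{keylmm} \emph{relative to} $\xx$: the induction on $e$, the prime-to-$2$ alterations of [ILO], and the exactness of the Kato complex supplied by \nmnm{Jannsen} (Theorem~\ref{Jannsen}) and \nmnm{Kerz--Saito} (Theorem~\ref{KerzSaito}) must all be rerun with the extra parameter $\xx$ present, and one must check that although the ``bad locus'' of $(a_i),\delta_0,(\uop,\uo)$ now depends on $\xx$, it remains Zariski-nowhere-dense and hence is still jointly avoidable by the same uniform quantifier pattern, independently of $\xx$. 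Granting (a) and (b), the equivalence $\xx\in\clO_v\Leftrightarrow\val_d(\xx;\cdots)$ is formal, the theorem follows by ranging over all first-order-admissible choices of $(\ko,\bmt,f,\bm\xi,\bmdl)$, and with it Theorem~\ref{TheMThm}.
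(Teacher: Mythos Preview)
Your sketch diverges from the paper precisely at the ``second step,'' and the two gaps you flag as (a) and (b) are not side issues but the whole content of Recipe~\ref{therecipe}; as written you have not closed them, and the paper closes them by a different route altogether.

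Concretely: the paper never constructs an ``$\xx$-twist'' of $\bm\alpha_{\bmum_0,\bmam,f}$ whose residue at $P$ reads off the sign of $v_P(\xx)$, and it never tries to isolate a single $P\in\DDf$ by shrinking the $\Sigma_{\delta_i}$. Instead it applies the Key Lemma (revisited) not to $K$ but to the quadratic extension $K_\veps=K[\sqrt\veps\,]$, for each $\veps\in K$: one has $\varphi(\fbmt)$ holding in $\tlK_\veps$ iff $\veps\in\Tft:=\cup_{P\in\DDf}U_P\cdot K^{\cdot 2}$ (Lemma~\ref{keylmm1}). This is the meaning of the parameter $\bm\xi$ and of $q_{\bmum_0,\bmum,f}(\bm x_{\bm\xi})$ in the displayed formula: the Pfister form is being evaluated over $K[\sqrt\xi\,]$, which is first-order in $K$ by Weil restriction. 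From $\Tft$ one then builds, by purely elementary manipulations (Notations/Remarks~\ref{nota5}, Lemma~\ref{keylmm3}), the set $\Tfto$, the ideal $\eua_\ft=\Tfto-\Tfto=\cap_{P\in\DDf}\eum_P$, and the semi-local ring $R_\ft=\cap_{P\in\DDf}\clO_P$. No further cohomology, no rerunning of Proposition~\ref{prop2} ``relative to $\xx$,'' is needed.

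The isolation of a single $P$ is then achieved not by tuning $(a_i,\delta_i)$ but by choosing $f$: Riemann--Roch supplies, for each closed point $P\in C$, an $h\in K^\times$ with $(h)_\infty=mP$ and $m$ odd, so that for $f=1/h$ one has $\DDf^0=\{P\}$ and $\clO_P=R^0_\ft=R_\ft\cdot R_{f'\!,\bmtm}$ (Corollary~\ref{corr}). Your claim that ``shrinking the $\Sigma_{\delta_i}$'' pushes the competing points of $\DDf\setminus\{P\}$ away misreads what those quantifiers do: they range over $\ko$, not over $k=\ko(\bmt)$, and control only the places of $\ko$ entering Jannsen's and Kerz--Saito's local--global principles, not the finite set $\DDf\subset C$. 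So as it stands your step~(b) does not go through, and step~(a) is a genuine open-ended proposal rather than a proof.
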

\noindent
The proof of Theorem~\ref{mainthm} follows 
from the Recipe~\ref{therecipe} below.
\vskip9pt
%
%
\noindent
A) \ {\it The uniformly definable subsets $\,\Tft$, $\Tfto$ 
and semi-local subrings $\eua_\ft \subset R_\ft$ of $\,K$} 
\vskip5pt
\noindent
\begin{notations/remarks}
\label{nota4}
We supplement Notations/Remarks \ref{nota1},
\ref{nota2}, 
as follows. 
\vskip2pt
\itm{25}{
\item[1)] For $\veps\in K$, set $K_\veps:=K[\hb1\root n\of\veps\,]$
and consider $\res_\veps:\HH^{d+1}\big(K,\lvZ/n(d)\big)\to
                     \HH^{d+1}\big(\Kvx,\lvZ/n(d)\big)$.
\vskip2pt
\item[2)] Let $C_\veps\to C$, $P_\veps\mapsto P$, 
be the normalization of $C$ in the field extension 
$\Kvx\,|\,K$, and denote by $D\!_{f\hb1,\ha1\veps}$ 
the set of all $P_\veps\in C_\veps$ such that 
$v_{P_\veps}(f)\not\in n\ha1\cdot\ha1v_{P_\veps}\Kvx$. 
\vskip2pt
\item[3)] For every $P\in C$, let $U_P:=\clO_P^\times$
be the $P$-units, and let $U\!_P\subset\clO_{v_P}$ 
be the $v_P$-units. Then for $\veps\in K^\times$ one has:
(i) $\veps\in U\!_P\!\cdot\!K^{\lower2pt\hbox{\Large$\cdot$}n}$
\ iff \  (ii) $v_P(\veps)\in n\hb1\cdot\hb1 v_P K$.
} 
\end{notations/remarks}
\begin{lemma}
\label{keylmm1}
In the above notations, 
$\veps\in\cup_{P\in\DDf}\,U\!_P\!
\cdot\!K^{\lower2pt\hbox{\Large$\cdot$}n}\,$  
$\Leftrightarrow$ $\,\varphi(\fbmt)$ holds in $K_\veps$.
\vskip2pt
\noindent
In particular, 
$\,\Tft:=\cup_{P\in\DDf}\,U\!_P\!\cdot\!
K^{\lower2pt\hbox{\Large$\cdot$}n}\subset K$ 
are uniformly first-oder definable in $K$ as follows:
\vskip3pt
\centerline{$\Tft=\{\veps\in K\mid\varphi(\fbmt) 
\hbox{ holds in } \tlK_\veps=K_\veps[\mu_4]\,\}$}
\end{lemma}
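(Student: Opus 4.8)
The plan is to reduce both parts of the statement to the Key Lemma~(revisited), but applied not to $K$ itself: instead one applies it to the Kummer extension $\Kvx=K[\sqrt[n]{\veps}\,]$ and compares $C$ with its normalization $C_\veps\to C$ in $\Kvx\,|\,K$. Throughout we may assume $\veps\in K^\times$ (the case $\veps=0$ is immediate). The one preliminary needed is the equivalence already recorded in Notations/Remarks~\ref{nota4}(3): for $P\in C$,
\[
\veps\in U_P\cdot(K^\times)^n\ \Longleftrightarrow\ v_P(\veps)\in n\cdot v_PK.
\]

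The first step is an elementary ramification analysis of $C_\veps\to C$. Fix $P\in C$ and a point $P_\veps\in C_\veps$ above it, and let $e=e(v_{P_\veps}|v_P)$ be the ramification index, the valuations being normalised to have value group $\lvZ$; thus $v_{P_\veps}|_K=e\cdot v_P$ and, as $f\in K$, also $v_{P_\veps}(f)=e\cdot v_P(f)$. If $v_P(\veps)\in n\cdot v_PK$, write $\veps=ug^n$ with $u\in\clO_P^\times$ and $g\in K^\times$; then $\Kvx=K[\sqrt[n]{u}\,]$ and $n\cdot v_{P_\veps}(\sqrt[n]{u})=v_{P_\veps}(u)=0$, so $\sqrt[n]{u}$ is a $v_{P_\veps}$-unit and $e=1$; hence $v_{P_\veps}(f)=v_P(f)$ and $v_{P_\veps}\Kvx=v_PK$, so $P_\veps\in D_{f,\veps}$ if and only if $v_P(f)\notin n\cdot v_PK$, i.e.\ if and only if $P\in\DDf$. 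If instead $v_P(\veps)\notin n\cdot v_PK$, then $n\cdot v_{P_\veps}(\sqrt[n]{\veps})=v_{P_\veps}(\veps)=e\cdot v_P(\veps)$ gives $n\mid e\cdot v_P(\veps)$; since $n$ is prime and $n\nmid v_P(\veps)$, this forces $n\mid e$, hence $e=n$ by the fundamental inequality, whence $v_{P_\veps}(f)=n\cdot v_P(f)\in n\cdot v_{P_\veps}\Kvx$ and $P_\veps\notin D_{f,\veps}$. Combining the two cases with the preliminary equivalence yields
\[
D_{f,\veps}\neq\emptyset\ \Longleftrightarrow\ \exists\,P\in\DDf:\ v_P(\veps)\in n\cdot v_PK\ \Longleftrightarrow\ \veps\in\Tft.
\]

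The second step is to apply the Key Lemma~(revisited) with $\Kvx$ playing the role of $K$. As $\Kvx\,|\,K$ is finite, $\chr(\Kvx)=\chr(K)\neq2$ and $\dim(\Kvx)=\dim(K)=d$, so $\Kvx$ satisfies Hypothesis~(H); the relative algebraic closure $\ko'$ of $\ko$ in $\Kvx$ is a finite, hence global, extension of $\ko$, it is relatively algebraically closed in $\Kvx$, the tuple $(f,t_1,\dots,t_e)$ stays algebraically independent over $\ko'$ with each $t_i$ still an $n^{\rm th}$ power, $\td(\Kvx|\ko')-1=e$ is unchanged, and $f$ stays non-constant on $C_\veps$. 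Since $D_{f,\veps}$ is by definition ``$\DDf$ computed inside $\Kvx$'', the Key Lemma~(revisited) gives
\[
D_{f,\veps}\neq\emptyset\ \Longleftrightarrow\ \varphi(\fbmt)\ \text{holds in}\ \tlK_\veps=\Kvx[\mu_4].
\]
Chaining this with the previous display is exactly the asserted equivalence; the ``in particular'' clause follows, since $\Tft$ is then cut out in $K$ by the formula ``$\varphi(\fbmt)$ holds in $K[\sqrt{\veps}\,][\mu_4]$'', which is a genuine first-order formula in $\veps$, uniform in the parameters $f,\bmt$, because the field $K[\sqrt{\veps}\,][\mu_4]$ (of degree $\leqslant4$ over $K$, as $n=2$) together with its ring operations is interpretable in $K$ uniformly in $\veps$, and the relativisation of $\varphi(\fbmt)$ to it is again a single formula.

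The step requiring the most care is the invocation of the Key Lemma~(revisited) for $\Kvx$: one must check that the definable data built into $\varphi(\fbmt)$ --- the maximal global subfield, and through it the sets $\Sigma_{\delta_i}$ and $U^{\bullet}_{\delta_0}$ --- are interpreted inside $\Kvx[\mu_4]$ exactly as the Key Lemma~(revisited) for $\Kvx$ prescribes, so that ``$\varphi(\fbmt)$ holds in $\Kvx[\mu_4]$'' genuinely is the Key-Lemma-(revisited) condition for $\Kvx$ and not some unrelated reinterpretation; this is precisely what the uniformity in the construction of $\varphi$ secures. The ramification bookkeeping of the first step, by contrast, is routine, using only that a Kummer extension of prime degree $n$ coprime to the characteristic is, at each place, either unramified or totally ramified.
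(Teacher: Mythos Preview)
Your proof is correct and follows essentially the same approach as the paper's. Both arguments reduce to the Key Lemma (revisited) applied to $K_\veps$, combined with an elementary ramification analysis of the Kummer cover $C_\veps\to C$ showing that $D_{f,\veps}\neq\emptyset$ if and only if $\veps\in\Tft$. Your organization is slightly tidier: you first establish the ramification biconditional in one stroke (covering both the unramified case $v_P(\veps)\in n\cdot v_PK$ and the totally ramified case $v_P(\veps)\notin n\cdot v_PK$, for all $P\in C$), and then apply the Key Lemma once; the paper instead argues the two implications separately, the backward one by contraposition. You are also more explicit than the paper in verifying that $K_\veps$ satisfies Hypothesis~(H) and that the data $(\ko',f,\bmt)$ meet the standing assumptions of the Key Lemma. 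The one point both treatments handle somewhat informally is the passage from the global subfield $\ko$ of $K$ to the global subfield $\ko'=\overline{\ko}\cap K_\veps$ of $K_\veps$; the paper addresses this via an explicit correspondence $\delta_\veps\leftrightarrow\delta$ between $\ko'^{\times}$ and $\ko^\times$ matching the sets $\Sigma_{\delta}$, whereas you appeal to the uniformity of $\varphi$. Either route is adequate here.
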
 
\vskip1pt
\begin{proof} 
To $\Rightarrow\,$: Let $\veps\in\cup_{P\in\DDf}\,
U\!_P\cdot K^{\lower2pt\hbox{\Large$\cdot$}n}$ 
be given, and $P\in\DDf$ be such that $\veps\in U\!_P
\cdot K^{\lower2pt\hbox{\Large$\cdot$}n}$. Then 
$P$ is unramified in the extension $K_\veps|K$.
Hence if $C_\veps\to C$ is the normalization of $C$ 
in the field extension $K_\veps\hla K$, it follow that
$v_{P_\veps}(f)=v_P(f)$ is prime to $n$. Hence
$D\!_{f\hb1,\ha1\veps}\neq\vid$, and therefore,
by Key Lemma~\ref{keylmmrev} follows that 
condition~ii) is satisfied over $K_\veps$. 
Let $\koveps=\oli\ko\cap K_\veps$ be the field 
of constants of $K_\veps$. Then $\koveps|\ko$ is 
a finite field extension, and therefore, for every
$\delta_\veps\in\koveps^\times$ there is $\delta\in\ko^\times$
such that for all $\voeps$ and $\vo:=\voeps|_{\ko}$ one
has: $\voeps(\delta_\veps)\neq0$ iff $\vo(\delta)\neq0$.
In particular, $\Sigma_{\delta_\veps}\cap\ko=\Sigma_\delta$.
Therefore, condition~ii) for $K_\veps$ implies condition~ii)
for $K$. 
\vskip3pt
To $\Leftarrow\,$: Let
$\veps\not\in\cup_{P\in\DDf}\,U\!_P\cdot 
K^{\lower2pt\hbox{\Large$\cdot$}n}$ that is,
$\veps\not\in U\!_P\cdot K^{\lower2pt
\hbox{\Large$\cdot$}n}$ for all $v\in\DDf$. Then
by~Notations/Remarks~\ref{nota3},~3), one has
$v_P(\veps)\not\in n\cdot v_P(K)$ for all $P\in\DDf$. 
Hence for all $P\in\DDf$, and any prolongation
$P_\veps$ to $K_\veps$ one has $e(P_\veps|P)=n$,
thus $v_{P_\veps}(f)=e(P_\veps|P)\,v_P(f)
\in n\cdot v_{P_\veps}(K_\veps)$. Further, since 
$v_P(f)\in n\cdot v_P(K)$ for $P\not\in\DDf$, one 
has $v_{P_\veps}(f)\in n\cdot v_{P_\veps}(K_\veps)$ 
for $P_\veps\mapsto P\not\in\DDf$, thus conclusing 
that $v_{P_\veps}(f)\in n\cdot 
v_{P_\veps}(K_\veps)$ for all $P_\veps\in C_\veps$. 
On the other hand, by hypothesis~ii), applying 
Key Lemma~\ref{keylmmrev} to $K_\veps$, it follows that 
$\DD_{f\hb1,\ha1\veps}$ is non-empty, contradiction!
\end{proof}
\begin{notations/remarks}
\label{nota5}
In the notations from Lemma~\ref{keylmm1} above,
we have the following:
\vskip2pt 
\itm{25}{
\item[1)] Let $\eta\in K\ha2\backslash\,\Tft$
be given. Then by Notations/Remarks~\ref{nota4},~3), 
it follows that for all $P\in\DDf$ one has:
$v_P(\eta)\not\in n\!\cdot\!v_{P}(K)$.
In particular, $v_P(\eta)\neq0$, and therefore one has:
\vskip2pt
\itm{20}{
\item[-] If $v_P(\eta)>0$, then 
$\eta-1\,\in\, \eum_P-1\subset U_P\subset\Tft$,
hence finally $\eta-1\,\in\,\Tft$. 
\vskip2pt
\item[-] If $v_P(\eta)<0$, then 
$v_P(\eta-1)=v_P(\eta)\not\in n\!\cdot\!v_P(K)$. 
Therefore, by the discussion at Notations/Remarks 
\ref{nota4},~3), it follows that 
$\eta-1\not\in U_P\cdot K^{\lower2pt\hbox{\Large$\cdot$}n}$.
}
\vskip2pt
\item[2)] Conclude that for $\eta\in K$ the
conditions (i), (ii) below are equivalent: 
\vskip5pt
\centerline{\ha{20} (i) $\eta, \eta-1\not\in\,\Tft\,$;
\ (ii) {\it $v_P(\eta)<0$ 
and 
$v_P(\eta)\not\in n\cdot v_P(K)$ for all $P\in\DDf$.\/}}
\vskip5pt
\item[3)] Hence $\Tfto\hb2:=\hb1\big\{\hb1\xi\in K\,|\,
{\textstyle{1\over\xi}, {1\over\xi}\hb1-\hb{2}1}\!
\not\in\Tft\big\}$ are \hbox{uniformly 
definable,~and}
\[
\hbox{\it $\xi\in\Tfto$ \ iff \ 
$\forall\ha3P\in\DDf\,$ one has: $\,v_P(\xi)>0, \,
v_P(\xi)\not\in n\cdot v_P(K)$.\/}
\leqno{\ha{20}(*)}
\]
}
\end{notations/remarks}
\begin{lemma}
\label{keylmm3}
In the above notation, one has
$\eua_\ft\!:=\cap_{P\in\DDf}\,\eum_P=\Tfto-\Tfto$.
Hence $\eua_\ft\subset K$ is uniformly definable,
thus so is the subring 
$R_\ft=\cap_{P\in\DDf}\clO_P=\{\,r\in K\ha2|\ha2 
     r\ha1\cdot\ha1\eua_\ft\subset\eua_\ft\}$~of~$K\!$.
\end{lemma}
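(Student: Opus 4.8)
The plan is to establish the two set equalities $\eua_\ft=\Tfto-\Tfto$ and $R_\ft=\{r\in K\mid r\cdot\eua_\ft\subseteq\eua_\ft\}$, from which uniform first-order definability is immediate given that $\Tfto\subset K$ is uniformly definable (Notations/Remarks~\ref{nota5}). Throughout I would use that $\DDf$ is finite (it is contained in the support of $\dv(f)$) and that the $v_P$, $P\in\DDf$, are pairwise inequivalent \emph{discrete} valuations with value group $\lvZ$, so that the weak approximation theorem applies to them and the condition ``$v_P(\xi)\notin n\cdot v_P(K)$'' simply reads ``$n\nmid v_P(\xi)$''. First I would dispose of the easy inclusion $\Tfto-\Tfto\subseteq\eua_\ft$: for $\xi,\xi'\in\Tfto$ the characterization $(*)$ of Notations/Remarks~\ref{nota5} gives $v_P(\xi),v_P(\xi')>0$ for all $P\in\DDf$, hence $v_P(\xi-\xi')\geqslant\min(v_P(\xi),v_P(\xi'))>0$, so $\xi-\xi'\in\eum_P$ for every such $P$, i.e.\ $\xi-\xi'\in\eua_\ft$.

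The reverse inclusion $\eua_\ft\subseteq\Tfto-\Tfto$ is the substantive step. Given $a\in\eua_\ft$, so $v_P(a)\geqslant1$ for all $P\in\DDf$, I would produce $\xi\in K$ with both $\xi$ and $\xi-a$ in $\Tfto$; then $a=\xi-(\xi-a)$ is the desired representation. The point is to choose, for each $P\in\DDf$, a positive integer $m_P$ with $n\nmid m_P$ \emph{and} $m_P\neq v_P(a)$ --- concretely $m_P:=1$ when $v_P(a)\geqslant2$ and $m_P:=n+1$ when $v_P(a)=1$ --- and then, by weak approximation, pick $\xi\in K$ with $v_P(\xi)=m_P$ for all $P\in\DDf$. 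Then $v_P(\xi)=m_P>0$ with $n\nmid m_P$, so $\xi\in\Tfto$ by $(*)$; and since $m_P\neq v_P(a)$ one gets $v_P(\xi-a)=\min(m_P,v_P(a))=1$ for every $P\in\DDf$, whence $\xi-a\in\Tfto$ as well. This proves $\eua_\ft=\Tfto-\Tfto$, and hence that $\eua_\ft$ is uniformly definable.

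Finally, for the assertion about $R_\ft$ I would verify the equality $\cap_{P\in\DDf}\clO_P=\{r\in K\mid r\cdot\eua_\ft\subseteq\eua_\ft\}$ directly. For ``$\subseteq$'': if $v_P(r)\geqslant0$ and $v_P(a)\geqslant1$ for all $P\in\DDf$, then $v_P(ra)=v_P(r)+v_P(a)\geqslant1$, so $ra\in\eua_\ft$. For ``$\supseteq$'': if $r\notin\cap_{P\in\DDf}\clO_P$, pick $P_0\in\DDf$ with $v_{P_0}(r)\leqslant-1$ and, by weak approximation, $a\in K$ with $v_P(a)=1$ for all $P\in\DDf$ (so $a\in\eua_\ft$); then $v_{P_0}(ra)=v_{P_0}(r)+1\leqslant0$, so $ra\notin\eum_{P_0}$, and since $\eua_\ft\subseteq\eum_{P_0}$ also $ra\notin\eua_\ft$, i.e.\ $r$ fails to stabilize $\eua_\ft$. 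Given the uniform definability of $\eua_\ft$ already obtained, this exhibits $R_\ft$ as uniformly definable.

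I expect the main obstacle to be precisely the inclusion $\eua_\ft\subseteq\Tfto-\Tfto$: one has to split $a$ into two pieces each having \emph{positive, $n$-indivisible} value at every $P\in\DDf$ simultaneously, which is why the target valuations $m_P$ in the approximation step must be chosen place by place --- taking $v_P(\xi)=1$ uniformly breaks down at those $P$ where $v_P(a)=1$, since there $v_P(\xi-a)$ is left uncontrolled. All remaining steps are routine valuation-theoretic bookkeeping.
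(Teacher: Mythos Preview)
Your proof is correct and follows essentially the same route as the paper: the easy inclusion $\Tfto-\Tfto\subseteq\eua_\ft$ is identical, and the substantive inclusion is handled in both cases by weak approximation over the finite set $\DDf$, with the $R_\ft$ description being routine valuation bookkeeping (the paper simply cites ``well known basic valuation theoretical facts'' where you spell things out).

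The only difference is in how weak approximation is deployed for the hard inclusion. The paper goes directly for $\xi'$ with $v_P(\xi')=v_P(\xi'-\xi)=1$ for \emph{all} $P\in\DDf$; this is possible because weak approximation lets one prescribe $\xi'$ modulo $\eum_P^2$, not merely $v_P(\xi')$, so at places with $v_P(\xi)=1$ one approximates an element $a_P$ of value $1$ whose residue in $\eum_P/\eum_P^2$ differs from that of $\xi$. Your variant sidesteps this by allowing $m_P=n+1$ at those places and then reading off $v_P(\xi-a)=\min(m_P,v_P(a))=1$ from $m_P\neq v_P(a)$ --- a perfectly good alternative, and your closing remark correctly identifies why prescribing only $v_P(\xi)=1$ (without the finer congruence) would not suffice.
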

\begin{proof} We first prove the equality
$\cap_{P\in\DDf}\,\eum_P=\Tfto-\Tfto$. 
For the inclusion ``$\,\subset\,$'' notice that
$\Tfto\subset\eum_P$, $P\in\DDf$ by 
Notations/Remarks~\ref{nota4},~3) above. Hence 
$\Tfto-\Tfto\subset\eum_P-\eum_P=\eum_P$, 
$P\in\DDf$, thus finally one has 
$\Tfto-\Tfto\subset\eua_\ft$. For the converse 
inclusion ``$\,\supset$'' let $\xi\in\eua_\ft$ be arbitrary. 
Since $\eua_\ft=\cap_{P\in\DDf}\,\eum_P$, 
it follows by Notations/Remarks~\ref{nota4},~3), 
above that $v_P(\xi)>0$ for all $P\in\DDf$. 
Hence by the weak approximation lemma, it 
follows that there exists $\xi'\in K$ such that 
both $\xi'$ and $\xi''\hb3:=\xi'\hb3-\xi$ satisfy 
$v_P(\xi'), v_P(\xi'')=1$. In particular, by 
Notations/Remarks~\ref{nota4},~3), one has
$\xi'\!,\xi''\in\Tfto$, hence $\xi=\xi'\hb3-\xi''\in\Tfto-\Tfto$. 
Concerning the assertions about $R_\ft$, the first
row equalities are well known basic valuation 
theoretical facts (which follow, e.g.\ using the weak 
approximation lemma). 
\end{proof}
\begin{corollary}
\label{corr}
In the above notation, let $\DDf^0\!:=\{P\in\DDf\,|\,
v_P(f)>0\}$ and $f'\hb3:=f/(f+1)$. Then
$R^0_\ft\!:=\cap_{P\in\DDf^0}\,\clO_P\!=\!R_\ft
\cdot R_{f'\hb3,\ha1\bmtm}\hb3:=\!\{rr' |\,r\!\in\! R_\ft,
r'\hb3\in\! R_{f'\hb3,\ha1\bmtm}\}$ is uniformly 
first-order definable.
\end{corollary}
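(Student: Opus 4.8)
The plan is to prove the set-theoretic identity $R^0_\ft=R_\ft\cdot R_{f',\bmtm}$ by a direct valuation computation together with the weak approximation lemma, and then to deduce uniform definability from Lemma~\ref{keylmm3} applied simultaneously to $f$ and to the term $f'=f/(f+1)$.

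First I would compare the divisors of $f$ and $f'$ on $C$. For every $P\in C$ one has $v_P(f')=v_P(f)-v_P(f+1)$, so by the ultrametric inequality: if $v_P(f)>0$ then $v_P(f+1)=0$ and $v_P(f')=v_P(f)$; if $v_P(f)<0$ then $v_P(f+1)=v_P(f)$ and $v_P(f')=0$; and if $v_P(f)=0$ then $v_P(f')\le0$. Since prime divisors of $K|\ko$ are discrete with value group $\lvZ$, the condition $v_P(g)\notin n\cdot v_PK$ means simply $n\nmid v_P(g)$. Writing $D_{f',\bmtm}$ for the analogue of $\DDf$ formed with $f'$, these computations give
\[
\DDf\cap D_{f',\bmtm}=\{P\in C\mid v_P(f)>0,\ n\nmid v_P(f)\}=\DDf^0 ,
\]
because a prime in $\DDf\setminus\DDf^0$ has $v_P(f)<0$, hence $v_P(f')=0$, hence is not in $D_{f',\bmtm}$; and conversely every $P\in\DDf^0$ has $v_P(f')=v_P(f)$, hence lies in $D_{f',\bmtm}$. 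Note also that $\DDf$ and $D_{f',\bmtm}$ are finite, being contained in the zero/pole loci of $f$ and $f'$.

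Granting this, the inclusion $R_\ft\cdot R_{f',\bmtm}\subseteq R^0_\ft$ is immediate: $\DDf^0\subseteq\DDf$ gives $R_\ft\subseteq R^0_\ft$ and $\DDf^0\subseteq D_{f',\bmtm}$ gives $R_{f',\bmtm}\subseteq R^0_\ft$, so products of elements of the two factors lie in the ring $R^0_\ft$. For the reverse inclusion, I would take $0\neq x\in R^0_\ft$, so $v_P(x)\ge0$ for all $P\in\DDf^0$, and use the weak approximation lemma at the finitely many prime divisors in $\DDf\cup D_{f',\bmtm}$ to choose $r\in K$ with $v_P(r)=\max(0,v_P(x))$ for $P\in\DDf$ and $v_P(r)=\min(0,v_P(x))$ for $P\in D_{f',\bmtm}\setminus\DDf$ (these conditions are consistent, since the two index sets are disjoint). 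Then $r\in R_\ft$, and setting $r'=x/r$ one checks $v_P(r')\ge0$ for every $P\in D_{f',\bmtm}$: at $P\in\DDf^0=\DDf\cap D_{f',\bmtm}$ one has $v_P(r)=v_P(x)$ (as $v_P(x)\ge0$) so $v_P(r')=0$, while at $P\in D_{f',\bmtm}\setminus\DDf$ one has $v_P(r')=v_P(x)-\min(0,v_P(x))=\max(0,v_P(x))\ge0$. Hence $x=rr'\in R_\ft\cdot R_{f',\bmtm}$; the case $x=0$ is trivial. This is essentially the standard fact that for finite families of independent discrete valuation rings the product of two intersections equals the intersection over the intersection of index sets.

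Finally, for uniform definability I would observe that $f'=f/(f+1)$ is a term in $f$, and that $(f',t_1,\dots,t_e)$ is again algebraically independent over $\ko$ (since $f=f'/(1-f')$, the fields $\ko(f,\bmt)$ and $\ko(f',\bmt)$ coincide), with each $t_i$ still an $n$-th power in $K$; hence Lemma~\ref{keylmm3} yields uniformly first-order definitions of $R_\ft$ and of $R_{f',\bmtm}$ with parameters $(f,\bmt)$, and $R^0_\ft$ is then defined uniformly by $\exists\,r\,\exists\,r'\,\big(x=rr'\ \wedge\ r\in R_\ft\ \wedge\ r'\in R_{f',\bmtm}\big)$. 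I expect the only genuine work to be the reverse inclusion $R^0_\ft\subseteq R_\ft\cdot R_{f',\bmtm}$: one must ensure the valuations prescribed for $r$ do not clash, which is exactly where the identity $\DDf\cap D_{f',\bmtm}=\DDf^0$ and the hypothesis that $x$ is regular along $\DDf^0$ enter; once the divisor comparison for $f'$ is in place this reduces to a routine application of weak approximation.
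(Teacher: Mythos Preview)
Your proof is correct and follows essentially the same approach as the paper's: the paper's proof is a two-line sketch that simply notes that $f$ and $f'$ have the same zeros and no common poles (equivalent to your divisor comparison yielding $\DDf\cap D_{f'\!,\bmtm}=\DDf^0$) and then invokes weak approximation together with Lemma~\ref{keylmm3}, which is exactly what you carry out in detail.
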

\begin{proof} Noticing that $f\!,f'$ have the same
zeros and no common poles, the assertion of the 
Corollary~\ref{corr} follows from Lemma~\ref{keylmm3}
by the weak approximation lemma. 
\end{proof}
%
%
\vskip5pt
\noindent
B) {\it Defining the $k$-valuation rings of $K|k$\/}
\vskip5pt
In the notations and hypotheses the previous sections,
recall that $K=k(C)$ for some projective normal 
$k$-curve $C$. By Riemann-Roch, for every closed 
point $P\in C$ there are integers $m\gg0$ prime to $n=2$ 
and functions $h\in K\tms$ such that $(h)_\infty=m\,P$.
Hence setting $f=1/h$, by Corollary~\ref{corr} one has:
\vskip5pt
\centerline{$\clO_P=R^0_{\ft}=R_\ft\cdot R_{f'\hb3,\ha1\bmtm}$,
\quad
$\eum_P=\{r\in K\mid r\in\clO_P,
        \,r^{-1}\not\in\clO_P\}$.}
\vskip5pt\noindent
Hence we have the following {\it uniform first-oder 
recipe\/} to define the prime $\ko$-divisors of $K|k$:
\begin{recipe} 
\label{therecipe}
Recall $\varphi_d$, $\abs\psi(\xx)$, 
$\psi_r(\tx_1,\dts,\tx_r)$ from the Introduction. 
If $\dim(K)=1$, then the prime divisors of $K$ 
are uniformly first-order definable by the formulae 
$\val_1$ given by \nmnm{Rumely}~[Ru]; and if 
$\dim(K)=2$, the {\it geometric\/} prime divisors 
of $K|\ko$ are uniformly first-order definable by 
the formulae $\val_2$ given by \nmnm{Pop}~[P4].
Next let $K$ satisfying Hypothesis~(H) from
Introduction, $\ko\subset K$ be (maximal)
global subfields, and $e:=\dim(K)-2=\td(K|\ko)-1>0$. 
We construct  
$
\val_d\big(\,\xx\ha1; f\!,\ha1\bmt, \bm\xi, 
\bmdl, (a_i)_i\in\bmSgodudlm, \delta_0, 
\bmu^\nxx_0\in U^{^\bullet}_{\!\delta_0},
q_{\bmum^\nix_0,\bmum,f}(\bm x_{\bm\xi})\big)
$  
concretely along the steps: 
\vskip4pt
\itm{25}{
\item[1)] Consider the systems $\bmt:=(t_1,\dts,t_e)$,
of $\ko$-algebraically independent elements 
of $K$~with $t_i$ squares in $K$. These are
uniformly definable using the formula 
$\psi_e(\tx_1,\dts,\tx_e)$ over $\ko$. 
\vskip2pt  
\item[2)] $\clP_{\ko,\defi{val}}:
=\{(\bmt, f)\!\in\! K^{e+1}\,|\,R^0_\ft\subsetneq K \ 
\scalebox{.95}[1]{{is a valuation ring}}\}$
is {\it uniformly first-order definable.\/} 
\vskip2pt
\item[3)] Finally, the above $R^0_\ft$ are 
valuation rings of prime $\ko$-divisors of 
$K|\ko$, and conversely, for every prime 
$\ko$-divisor $w$ of $K|\ko$ there are pairs 
$(\fbmt)\in\clP_{\ko,\defi{val}}$ such that 
$\clO_w=R^0_\ft$.
}
\vskip2pt
Conclude that the geometric prime divisors of 
$K$ are uniformly first-order definable via
$\clP_{\ko,\defi{val}}$.
\end{recipe}
\section{Proof of the Main Theorem}
\noindent
A) \ {\it First proof\/}: Using \nmnm{Scanlon}~[Sc]
\vskip5pt
A first proof follows simply from~\nmnm{Scanlon},
Theorem~4.1 and Theorem~5.1, applied to the case 
of characteristic $\neq2$, using Theorem~\ref{TheMThm}
for the definability of valuations (which is essential
in both Theorem~4.1 and Theorem~5.1 of loc.cit.). 
This proof also shows that finitely generated 
fields of characteristic $\neq2$ are \textit{\textbf{bi-interpretable 
with the arithmetic.}\/}
\vskip7pt
\noindent
B) {\it Second proof\/}: Using \nmnm{Aschenbrenner%
--Kh\'elif--Naziazeno--Scanlon}~[AKNS]
\vskip5pt
Recall that one of the main results of 
[AKNS] asserts that the {\it finitely 
generated infinite domains $R$ are bi-interpretable 
with arithmetic,\/} see Theorem in the Introduction
of loc.cit. In particular, the isomorphism type of any 
such domain is encoded by a sentence $\istp R$. 
Thus the Main Theorem from the Introduction
follows from the following stronger assertion:
\begin{theorem}
Let $\kappa_0$ a prime field, $\chr(\kappa_0)\neq2$,  
$R_{\kappa_0}\subset\kappa_0$ be its prime subring, 
and $\clT=(t_1,\dots,t_r)$ be independent variables. 
Then the integral closures $R\subset K$ of $\,R_{\kappa_0}[\clT]\,$ 
in finite field extensions $\,K\,|\,\kappa_0(\clT)$ are 
uniformly first-order definable finitely generated domains.
\end{theorem}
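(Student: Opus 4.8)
The plan is to realise $R$ as an explicit intersection of valuation rings of $K$ and then to feed the definability of those valuation rings, already supplied by Theorem~\ref{TheMThm}, into a single universally quantified schema.

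\emph{Algebraic reduction.} I would first set $A:=R_{\kappa_0}[\clT]$. Since $R_{\kappa_0}$ is excellent, $A$ is excellent and the integral closure $R$ of $A$ in the finite extension $K$ of $\Quot(A)=\kappa_0(\clT)$ is a finite $A$-module; thus $R$ is a normal finitely generated domain with fraction field $K$ and Krull dimension equal to the Kronecker dimension $d:=\dim(K)$, which already gives the ``finitely generated domain'' half of the assertion. Being a normal Noetherian domain, $R=\bigcap_{\eup}R_{\eup}$, the intersection over its height-one primes $\eup$; each $R_{\eup}$ is a discrete valuation ring $\clO_{v_{\eup}}$ with fraction field $K$, and since $R/\eup$ is a finitely generated domain of Krull dimension $d-1$ its fraction field $\kappa(\eup)=Kv_{\eup}$ has Kronecker dimension $d-1=\dim(K)-1$, so each $v_{\eup}$ is a prime divisor of $K$, necessarily with $v_{\eup}(t_i)\geq0$ because $t_i\in A\subseteq R_{\eup}$. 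Conversely, any prime divisor $v$ of $K$ with $v(t_1)\geq0,\dots,v(t_r)\geq0$ has $A\subseteq\clO_v$, hence $R\subseteq\clO_v$ as $\clO_v$ is integrally closed. Therefore
\[
R=\bigcap\{\,\clO_v\mid v\text{ a prime divisor of }K,\ v(t_1)\geq0,\dots,v(t_r)\geq0\,\}.
\]
So it is enough to exhibit, uniformly in $K$, a parametrised first-order definition of the family of valuation rings of \emph{all} prime divisors of $K$, together with a definable validity predicate on the parameters: granting that, and recalling that the tuple $\clT$ is itself uniformly definable by $\psi_r$ from the Introduction, $R$ is defined by the $\Pi_1$-over-parameters condition ``$\xi$ lies in $\clO_v$ whenever $t_1,\dots,t_r$ do''.

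\emph{The equicharacteristic case.} If $\chr(K)=p>0$, every valuation of $K$ is trivial on the finite field $\abs K$, so \emph{all} prime divisors of $K$ are geometric; Theorem~\ref{TheMThm} and Recipe~\ref{therecipe} (which also handle $d\leq2$ via \nmnm{Rumely}'s $\val_1$ and \nmnm{Pop}'s $\val_2$) then already provide the required parametrised definition, and the algebraic reduction finishes this case. If $\chr(K)=0$, then $\kappa_0=\lvQ$, $A=\lvZ[\clT]$, and a prime divisor of $K$ is geometric, i.e.\ trivial on $\kappa:=\abs K$, precisely when its residue characteristic is $0$ (a valuation trivial on $\lvQ$ is trivial on the algebraic extension $\kappa$); the remaining prime divisors are the \emph{arithmetic} ones, of some residue characteristic $p>0$. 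Accordingly $R=R^{\mic{gen}}\cap R^{\mic{ar}}$, where $R^{\mic{gen}}$ is the intersection of the $\clO_v$ over the geometric, and $R^{\mic{ar}}$ over the arithmetic, prime divisors $v$ satisfying $v(t_i)\geq0$ for all $i$; exactly as in the algebraic reduction, $R^{\mic{gen}}$ is precisely the integral closure of $\lvQ[\clT]$ in $K$, and it is uniformly first-order definable by Theorem~\ref{TheMThm} word for word as in characteristic $p$.

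\emph{The arithmetic prime divisors in characteristic $0$.} It remains to define $R^{\mic{ar}}$, i.e.\ the valuation rings of the arithmetic prime divisors of $K$, and here the plan is to relativise Sections~2--4 over Henselian discrete valuation rings. By \nmnm{Poonen}'s predicate $\abs\psi$ the number field $\kappa=\abs K$ is uniformly definable in $K$, and since number fields are bi-interpretable with arithmetic (\nmnm{Rumely}~[Ru]), the finite places $w$ of $\kappa$ together with their Henselizations $R_w$ --- Henselian DVRs with finite residue field --- form a uniformly definable family inside $K$. An arithmetic prime divisor $v$ of $K$ restricts to such a $w$ and is ``geometric relative to $w$'', since $\td(Kv\,|\,\kappa w)=\td(K\,|\,\kappa)=r$ and $Kv$ is the function field of a variety over the finite field $\kappa w$. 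I would then rerun the argument of Proposition~\ref{prop-compat} with the base $\Spec R_w$ in place of an open subscheme of $\So$: \nmnm{Jannsen}'s Theorem~\ref{Jannsen}, applied with the global subfield $\lvQ$, already embeds the relevant cohomology group of $K$ into the sum over \emph{all} finite places, including those over $p$, and \nmnm{Kerz--Saito}'s Theorem~\ref{KerzSaito} is stated precisely so as to make the Kato complex~\KC\ of a projective regular flat $R_w$-model of $K$ exact. Choosing $f$ and an auxiliary tuple $\bmt$ as regular functions, respectively a system of regular parameters, on such a model, and reproving the analogues of Proposition~\ref{prop2} and Key Lemma~\ref{keylmm} in this relative setting, one would read off from the non-vanishing over $K[\mu_4]$ of the attached $(d+1)$-fold Pfister symbols $\uop\cpm\uo\cpm\cppr{u_1}{u_e}\cpm f$ exactly when $f$ has a zero or pole of order prime to $2$ along an arithmetic prime divisor over $w$. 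Mimicking Lemmas~\ref{keylmm1}--\ref{keylmm3} and Corollary~\ref{corr} then produces uniform first-order definitions of the valuation rings of the arithmetic prime divisors over each $w$, and intersecting over the definable family of all $w$ yields $R^{\mic{ar}}$; so $R=R^{\mic{gen}}\cap R^{\mic{ar}}$ is uniformly first-order definable, which together with [AKNS] also re-proves the Main Theorem.

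\emph{Main difficulty.} The hard part will be exactly this relativisation. The machinery of Sections~2--4 is engineered to detect prime divisors that are \emph{trivial} on a global base field, whereas the arithmetic prime divisors of a characteristic-$0$ field are nontrivial on the number field $\kappa$, so the whole Kato-complex bookkeeping must be redone with ``$\So$ together with a global subfield'' replaced by ``$\Spec R_w$ together with $\kappa$''. What has to be verified is that \nmnm{Jannsen}'s and \nmnm{Kerz--Saito}'s Hasse principles apply verbatim over the $R_w$ (they are stated for precisely such Henselian DVRs), that the codimension-one points in the special fibres are accounted for correctly in~\KC, and that both the induction on $e$ and the Pfister-form/first-order translation survive the substitution of $\kappa$ and a Henselization for the global base. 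Everything after that --- the passage from a uniformly definable family of all prime divisors of $K$ to the uniform definability of $R$ --- is the purely formal step of the algebraic reduction.
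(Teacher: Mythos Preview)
Your algebraic reduction $R=\bigcap\{\clO_v: v\text{ a prime divisor of }K,\ v(t_i)\geq0\text{ for all }i\}$ is correct, and in positive characteristic this together with Theorem~\ref{TheMThm} does finish the proof. In characteristic~$0$, however, your plan to capture the arithmetic prime divisors by ``relativising Sections~2--4 over $\Spec R_w$'' is a programme, not a proof: the Kato-complex bookkeeping, the induction of Proposition~\ref{prop2}, and the Pfister translation would all have to be reworked in mixed characteristic, and you do not carry this out. There is also a concrete obstruction you overlook: for places $w$ of $\kappa$ lying over~$2$, the coefficient $n=2$ is not invertible in $R_w$, so Theorem~\ref{KerzSaito} does not apply and the Pfister-form dictionary is unavailable exactly where you would need it to detect the prime divisors of residue characteristic~$2$.

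The paper sidesteps arithmetic prime divisors altogether. Rather than intersecting over rank-one prime divisors of $K$, it writes $R$ as an intersection of valuation rings of \emph{composed} (higher-rank) valuations, what it calls ``generalized geometric prime divisors,'' built inductively one variable at a time. Start from $L_0=\kappa_0$ with $\clV_0$ the finite places of $\kappa_0$ (definable by \nmnm{Rumely}, or trivial when $\kappa_0=\lvF_p$). At step $r$, for each $P\in\Max(\tilde R_r)$ --- so $\clO_P\subset L_r$ is the valuation ring of a \emph{geometric} prime divisor of $L_r$, uniformly definable by Theorem~\ref{TheMThm} --- and each prolongation $v^P\in\clV_{r-1}^P$ to the residue field $\kappa(P)$, form the composite $v_r=v^P\circ v_P$; its valuation ring is $\pi_P^{-1}(\clO_{v^P})$, definable once both pieces are (Lemma~\ref{lmmtre}). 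Lemmas~\ref{lmmuno} and~\ref{lmmdue} give $R_r=\bigcap_{v_r}\clO_{v_r}$, and induction on $r$ (Lemma~\ref{lmmquad}) finishes. The only arithmetic input is Rumely at the bottom of the tower; everything above it uses only the geometric prime divisors already supplied by Theorem~\ref{TheMThm}, so no new local-global principle is needed. Your route, if it could be completed, would yield the stronger statement that all arithmetic prime divisors of $K$ are uniformly definable --- but that is not required here, and the paper's inductive composition trick is both easier and sufficient.
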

\begin{proof} First, by the Finiteness Lemma,
$R$ is a finite $R_0[\clT]$-module, hence finitely 
generated as ring. The uniform definability of $R$ 
is though more involved, and uses the uniform 
definability of \defi{generalized geometric prime 
divisors} of $K$ combined with \nmnm{Rumely}~[Ru]. 
\begin{lemma} 
\label{lmmuno}
Let $A$ be an integrally closed domain, and
$\clV$ be a set of valuations of the fraction
field $K_A:=\Quot(A)$ 
such that $A=\cap_{v\in\clV_A}\,\clO_v$. 
Let $B$ be the integral closure of $A$ in an 
algebraic extension $K_B|K_A$, and $\clW$ 
be the prolongation of $\clV$ to $K_B$. 
Then $B=\cap_{w\in\clW}\,\clO_w$.
\end{lemma}
\begin{proof} Klar, left to the reader. \end{proof}
\noindent
Let $R_0$ be an integrally closed domain, 
$L_0:=\Quot(R_0)$, and $\clV_0$ be a 
set of valuations of $L_0$ such that 
$R_0=\cap_{v\in\clV}\clO_v$. Let $L_1|L_0(t)$
be a finite field extension, $R_1\subset \tl R_1\subset L_1$
be the integral closures of $R_0[t]\subset L_0[t]$ 
in $L_1$. Then $\kappa(P)$ are finite field extensions 
of $L_0$, $P\in\Max(\tl R_1)$ and let $\clV^P$ be 
the prolongation of $\clV_0$ to $\kappa(P)$. 
Finally let $\clV_1$ be the set of all
the valuations of the form $v_1:=v^P\!\circ v_P$ 
with $v_P$ the valuation of $P\in\Max(\tl R_1)$,
and $v^P\in\clV^P\!$. Then $v^P=v_1/v_P$, 
$v_1 L_1=v^P\!L^P\times\lvZ$ lexicographically 
ordered, and $L_1v_1=L^P \!v^P\!$. Further, 
the canonical restriction map 
${\rm Val}(L_1)\to{\rm Val}(L_0)$ gives 
rise to a well defined~surjective~maps:
\[
\clV_1\to\clV^P\to\clV,\quad v_1\mapsto 
   v^P\mapsto v_0:=v^P|_{L_0}=v_1|_{L_0}.
\]
\begin{lemma}
\label{lmmdue}
In the above notation, one has 
$R_1=\cap_{v_1\in\clV_1}\,\clO_{v_1}$.
\end{lemma}
\begin{proof} Lemma~\ref{lmmuno}
reduces the problem to the case $L_1=L_0(t)$,
$R_1=R_0[t]$. For $v_1=v^P\!\circ v_P$, 
$\clO_{v_1}\subset\clO_{v_P}$, hence
$\cap_{v_1\in\clV_1}\,\clO_{v_1}\subset 
\cap_{P}\,\clO_{v_P}=L_0[t]$. Thus it is
left to prove that $f(t)\in L_0[t]$ satisfies:
$v_1(f)\geq0$ for all $v_1\in\clV_1$ \ iff \
$f\in R_0[t]$. This easy exercise is left to
the reader.
\end{proof}
\begin{lemma}
\label{lmmtre}
Suppose that all the valuation
rings $\clO_P$, $P\in\Max(\tl R_1)$
and $\clO_{v^P}$, $v^P\in\clV^P$ are 
$(\hb1$uniformly$\ha1)$ first-oder definable. Then 
so are $\clO_{v_1}$, $v_1\in\clV_1$ and 
$R_1=\cap_{v_1\in\clV_1}\,\clO_{v_1}$.
\end{lemma}
\begin{proof}
For $v_1=v^P\!\circ v_P$, 
one has $\clO_{v_1}=\pi_P^{-1}(\clO_{v^P})$,
where $\pi_P:\clO_P\to\kappa(P)=:L^P\!$, etc. 
\end{proof}
\noindent
Finally, all of the above can be performed
inductively for systems of variables 
$\clT:=(t_1,\dts,t_r)$, $L_r$ finite field 
extension of $L_0(\clT)$, $R_r\subset\tl R_r\subset L_r$ 
the integral closures of $R_{r-1}[t_r]\subset L_{r-1}[t_r]$, 
thus leading to the corresponding sets of all 
valuations $\clV_r$ of $L_r$ the form 
$v_r=v_{r-1}^P\circ v_P$, where 
$P\in\Max(\tl R_r)$ and $v_{r-1}^P$ lies in the 
prolongation $\clV_{r-1}^P$ of $\clV_{r-1}$ to 
$\kappa(P)$. 
\begin{lemma}
\label{lmmquad}
In the above notation, one has 
$R_r=\cap_{v_r\in\clV_r}\,\clO_{v_r}$.
Further, if all the valuation rings $\clO_P$, 
$P\in\Max(\tl R_r)$ and $\clO_{v^P}$, 
$v_P\in\clV_{r-1}^P$ are $(\hb1$uniformly\ha1$)$ 
first-order definable, then so are the valuation
rings $\clO_{v_r}$, $v_r\in\clV_r$ and 
$R_r=\cap_{v_r\in\clV_r}\,\clO_{v_r}$.
\end{lemma}
\begin{proof} 
Induction on $r$ reduces everything to $r=1$. 
Conclude by using Lemmas~\ref{lmmdue},~\ref{lmmtre}. 
\end{proof}
Coming back to the proof of Theorem~5.1, 
one has $R_{\kappa_0}=\lvF_p$, $p>2$ or 
$R_{\kappa_0}=\lvZ$, and $R\subset K$ is the 
integral closure of $R_{\kappa_0}[\clT]$ in $K$. 
Hence Theorem~5.1 follows from 
Lemma~\ref{lmmquad}.
\end{proof}
\vskip5pt
\noindent
C) {\it Third proof\/}: A direct proof involving \nmnm{Rumely} [Ru]
\vskip5pt
We begin by mentioning that \nmnm{Pop}~[P4], 
Theorem~1.2 holds in the following more general 
form (which might be well known to experts, but 
we cannot give a precise reference). Namely, let 
$\clK$ be a class of function fields of projective 
normal geometrically integral curves $K=k(C)$ 
such that $k\subset K$ and the $k$-valuations rings 
$\clO,\eum$ of $K|k$ are (uniformly) first-order 
definable. Then for every non-zero $t\in K$, 
$e>0$, the sets
\[
\Sigma_{t,e}:=\{\clO,\eum\mid t\in\eum^e\!,\,
    t\not\in\eum^{e+1}\}
\]
are (uniformly) first-order definable subsets of the
set of all the valuation rings $\clO,\eum$. Hence 
given $N>0$, a function $t\in K^\times$ has 
$\deg(t):=[K:k(t)]=N$ iff the following hold: 
\vskip2pt
\itm{20}{
\item[i)] $\Sigma_{t,N+1}=\vid$ and 
$|\Sigma_{t,e}|\leq N$ for all $0<e\leqslant N$.
\vskip2pt
\item[ii)] $\dim_k\,\clO/\eum\leq N$ for all 
$\clO,\eum\in\Sigma_{t,e}$, and moreover:
$N={\textstyle\sum}_{0<e\leqslant N}
{\textstyle\sum}_{\clO,\eum\in
\Sigma_{t,e}}\,e\dim_k\clO/\eum$.
}
Hence there exists a (uniform) first-order 
formula $\deg_N(\eut)$ such that for $t\in K$ 
one has:
\vskip2pt
\itm{20}{
\item[-] ${\rm deg}_N(t)$ is true in $K$ iff 
$\,t$ has degree $N$ as a function of $K|k$, i.e.,
$[K:k(t)]=N$.
}
\vskip5pt
Now let $K$ satisfy Hypothesis~(H). The
isomorphism type of  $K$ is given by the data:
{\it 
\vskip2pt
\itm{30}{
\item[{\rm a)}] $\exists\,\ko\subset K$ with
$\ko=\oli k_{\mic0}\cap K$ and $\dim(\ko)=1$.
\vskip2pt
\item[{\rm b)}] $\exists\,(\bmt_e, t)$ a 
transcendence basis of $K|\ko$ with 
$k:=\ko(\bmt)$ satisfying $k=\oli k\cap K$.
\vskip2pt
\item[{\rm c)}] $\exists\,\,\fK\in\ko[\bm T_e, T\!, U]$ 
irreducible, $U$-monic $U$-separable, with coefficients
$\Sigma_\fK:=\{a_{\bmim}\}_{\bmim}$.
\vskip2pt
\item[{\rm d)}] $\exists\,u\in K\backslash k$ such 
that $[K:k(u)]=\deg_U(\,\fK)=:N_{\fK}$, i.e.,
$\deg_{\,\fK}(u)$ holds in $K|k$. 
}
}
\noindent
Consider pairs $\ko,\Sigma$ of global fields
endowed with finite systems of elements 
$\Sigma$. Then by \nmnm{Rumely}~[Ru], 
there exists a sentence $\istp{\ko,\Sigma}$ 
such that for all pairs $\ko',\Sigma'$ one 
has: $\istp{\ko,\Sigma}$ holds in $\ko',\Sigma'$ 
if and only if there exists a field isomorphisms 
$\imath:\ko\to\ko'$ with $\imath(\Sigma)=\Sigma'\!$. 
Further, inductively on $e$, let $\istp{\ko,\bmtm_e}$
be the sentence asserting that if $\bmt_e=(t_1,\dts,t_e)$ 
are $\ko$-algebraically independent in $K|\ko$, 
then $k:=\ko(\bmt_e)$ satisfies $k=\oli k\cap K$. 
Finally, consider the sentence $\istp K$
defined as follows:
\vskip2pt
\centerline{$\istp K \ \equiv$ \ a) $\wedge$ b)
$\wedge$ c) $\wedge$  d) $\wedge \ 
\istp{\ko,\Sigma_\fK} \wedge
\istp{\ko,\bmtm_e} \wedge \deg_{N_\fK}\!(u)$.}
\vskip2pt
Let $L$ be a finitely generated field such that
$\istp K$ holds in $L$. Then the assertions
a), b), c), d) together with $\istp{\ko,\Sigma_\fK}$
imply that there exists a subfield 
$\lo\subset L$ with $\lo=\oli l_{\mic0}\cap L$
and $\dim(\lo)=1$, and an isomorphism 
$\imath:\ko\to\lo$ such that setting 
$g_L:=g_L(\bm T_e, T, U):=\imath\big(\,\fK(\bm T_e, T, U)\big)$,
one has: $g_L(\bm T_e, T, U)$ is irreducible over $\lo$, and
there exist: First, a transcendence basis $(\bmt'_e,t')$ 
with $\bmt'_e:=(t'_1,\dts,t'_e)$ of $L|\lo$, 
such that $\istp{\lo,\bmtm'_e}$ holds in $L$, 
hence $l:=\lo(\bmt'_e)$ satisfies $l=\oli{\ha1l\ha1}\cap L$. Second, 
$u'\in L$ such that both $g_L(\bmt'_e, t'\!,u')=0$ and  
$\deg_{\,\fK}(u')$ hold in $L|l$, hence 
$[L:l(u')]=N_\fK$. Therefore, $\imath:\ko\to\lo$ 
together with $(\bmt_e,t,u)\mapsto(\bmt'_e,t'\!,u')$ 
give rise to a field embedding $\imath_K:K\to L$ 
such that $l=\lo(\bmt'_e)$ is relatively algebraically 
closed in $L$, and setting $L':=\imath_K(K)$, one 
has: $L'|l\hra L|l$ is a finite extension of 
function fields of curves over $l$ and
\[
[L:l(u')]=N_{f_K}=\deg_U(\,\fK)=\deg_U(g_L)=[L':l(u')].
\]
Hence we conclude that $L=L'$, thus $L=\lo(\bmt'_e,t'\!,u')$.
Therefore, the canonical embedding $\imath _K:K\hra L$ 
is actually a field isomorphism prolonging $\imath:\ko\to\lo$. 
\vskip10pt
\centerline{\NMNM{References}}
\vskip0pt


{\footnotesize
\begin{itemize}


\item[\lit{AKNS}] Aschenbrenner, M., Kh\'elif, A., 
          Naziazeno, E.\ and Scanlon, Th., {\it The logical complexity 
          of finitely generated commutative rings,\/} Int.\ Math.\ Res.\ 
          Notices (to appear). 

\item[\lit{Di}] Dittmann, Ph., {\it Defining Subrings in Finitely 
         Generated Fields of Characteristic Not Two,\/} 
         \vskip0pt\noindent
         See:  {\tt arXiv:1810.09333} [math.LO], Oct\ha{2}22, 2018.
         

\item[\lit{Du}] Duret, J.-L., {\it \'Equivalence \'el\'ementaire 
          et isomorphisme des corps de courbe sur un corps 
          algebriquement clos,\/} J. Symbolic Logic {\bf57} 
          (1992), 808--923.
      
\item[\lit{Ei}] Eisentr\"ager, K., {\it Integrality at a prime for 
           global fields and the perfect closure of global fields of 
           characteristic $p>2$,\/} J.\ Number Theory {\bf114} 
           (2005), 170--181.

\item[\lit{E-S}] Eisentr\"ager, K.\ and Shlapentokh, A., {\it Hilbert's 
           Tenth Problem over function fields of positive characteristic 
           not containing the algebraic closure of a finite field,\/} JEMS 
           {\bf19} (2017), 2103--2138.
           
\item[\lit{Hi}] Hironaka, H., {\it Resolution of singularities of an
          algebraic variety over a field of characteristic zero,\/}
          Annals of Math.\ {\bf 79} (1964), 109--203; 205--326.           

\item[\lit{Ill}] $\hbox{Illusie, L., {\it Compl\`exe de de Rham--Witt et 
              cohomologie cristalline,\/} Ann.\ha2Scie.\ha2ENS
              {\bf 12}\ha2(1979),\ha2501--661.}$ 
              
\item[\lit{Ja}] Jannsen, U., {\it Hasse principles for
               higher-dimensional  fields,\/} Annals of Math.\
               {\bf 183} (2016), 1--71. 

\item[\lit{Kh}] Kahn, E.,\ha3{\it La\ha2conjecture\ha2de\ha2Milnor%
\ha2(d'apres\ha2Voevodsky),\/}\ha3S\'em.\ha2Bourbaki,%
\ha2Asterisque\ha2{\bf245}\ha2(1997),\ha{2}379--418.
    
\item[\lit{Ka}] Kato, K., {\it A Hasse principle for two dim global fields}, 
              J.\ha2reine\ha2angew.\ha2Math.\ha3{\bf366}\ha2(1986),\ha2142--180.
              
\item[\lit{K-S}]  Kerz. M.\ and Saito, Sh., {\it Cohomological Hasse 
               principle and motivic cohomology for arithmetic schemes,\/}
               Publ.\hhb2Math.\hhb2IHES {\bf115} (2012), 123--183.

\item[\lit{K-R}] Kim, H.\ha2K.\ and Roush, F.\ha2W., {\it Diophantine 
               undecidability of $\lvC(T_1, T_2)$,\/} J.\ha2Algebra 
               {\bf150} (1992), 35--44.

\item[\lit{Ko1}] Koenigsmann, J., {\it Defining transcendentals 
      in function fields,\/} J. Symbolic Logic {\bf67} (2002), 
      947--956. 
     
\item[\lit{Ko2}] Koenigsmann, J., {\it Defining $\lvZ$ in $\lvQ$,\/} 
              Annals of Math.\ {\bf183} (2016), 73--93.

\item[\lit{Ko3}] Koenigsmann,\ha2J., {\it Decidability in local 
       and global fields,\/} Proc.\ha2ICM\ha22018 Rio\ha2de\ha2Janeiro,
       {\bf Vol.\ha{1}2},\ha263--78.

\item[\lit{M-S}] Merkurjev, A.\ S.\ and Suslin, A.\ A., 
             {\it $K$-cohomology of Severi--Brauer variety and norm
             residue homomorphism,\/} Math.\ USSR Izvestia {\bf 21}
             (1983), 307--340.

\item[\lit{M-Sh}] Miller, R.\ and Shlapentokh, A., {\it On existential 
              definitions of C.E. subsets of rings of functions of 
              characteristic 0,\/} {\tt arXiv:1706.03302} [math.NT]
             
     
\item[\lit{Pf1}] Pfister, A., Quadratic Forms with Applications 
            to Algebraic Geometry and Topology, LMS LNM {\bf217},
            Cambridge University Press 1995; ISBN 0-521-46755-1.

\item[\lit{Pf2}] Pfister, A., {\it On the Milnor conjectures: 
           history, influence, applications,\/} Jahresber.\ha2DMV 
           {\bf102}~(2000), 15--41.

\item[\lit{Pi}] Pierce, D., {\it Function fields and elementary 
           equivalence,\/} Bull.\ha3London Math.\ha3Soc.\ha3{\bf31} 
           (1999), 431--440.


\item[\lit{Po}] Poonen, B., {\it Uniform first-order definitions in 
            finitely gen.\ha2fields,\/} Duke Math.\ J. {\bf 138} (2007), 1--21.

\item[\lit{P-P}] {B.\ Poonen and F.\ Pop, {\it First-order characterization
              of function field invariants over large fields,\/} in: Model Theory
              with Applications to Algebra and Analysis, LMS LNM Series {\bf 350},
              Cambridge Univ.\ Press 2007; pp.~255--271.}

\item[\lit{P1}] Pop, F., {\it Embedding problems over large fields,\/}
              Annals of Math.\ {\bf144} (1996), 1--34.

\item[\lit{P2}] Pop, F., {\it Elementary equivalence versus isomorphism},
             Invent.\ Math.\ {\bf150} (2002), 385--308.

\item[\lit{P3}] 
             Pop, F., {\it Elementary equivalence of finitely generated
             fields,\/} Course Notes Arizona Winter School 2003, see
             {\tt http://swc.math.arizona.edu/oldaws/03Notes.html}

\item[\lit{P4}] 
             Pop, F., {\it Elementary equivalence versus Isomorphisms II,\/}    
             ANT {\bf 11} (2017), 2091-2111. 

\item[\lit{P5}] 
             Pop, F., See {\tt arXiv:1809.00440v1} [math.AG], Sept 3, 2018.

\item[\lit{Ro1}] Robinson, Julia, {\it Definability and decision 
             problems in arithmetic,\/} J.\ Symb.\ha2Logic {\bf14} 
             (1949), 98--114.    

\item[\lit{Ro2}] Robinson, Julia, {\it The undecidability of 
             algebraic fields and rings,\/} Proc.\ha2AMS {\bf10}
             (1959), 950--957. 
             
      
\item[\lit{Ru}]  Rumely, R., {\it Undecidability and Definability 
             for the Theory of Global Fields}, Transactions AMS 
             {\bf262} No.~1, (1980), 195--217.

\item[\lit{Sc}] Scanlon, Th., {\it Infinite finitely generated fields are 
              biinterpretable with ${\bf N}$,\/} JAMS~{\bf21} (2008), 893--908. 
              \vskip0pt\noindent
              {\it Erratum,\/} J.\ Amer.\ Math.\ Soc.~{\bf24} 
              (2011), p.\hhb{2}917.

\item[\lit{Se}] Serre, J.-P., {\it Zeta and L-functions,\/} in: Arithmetical
              Algebraic Geometry, Proc. Conf. Purdue 1963, New York 1965, 
              pp.\ 82--92.

\item[\lit{Sh1}] Shlapentokh, A., {\it First Order Definability and 
        Decidability in Infinite Algebraic Extensions of Rational 
        Numbers,\/} Israel J.\ Math.\ {\bf226} (2018), 579--633.
        
\item[\lit{Sh2}] Shlapentokh, A., {\it On definitions of 
           polynomials over function fields of positive characteristic,\/}  
\item[] See {\tt arXiv:1502.02714v1}        

\item[\lit{Vi}] Vidaux, X., {\it \'Equivalence \'el\'ementaire de 
          corps elliptiques,\/} CRAS S\'erie I {\bf330} (2000), 1-4.
\end{itemize}
}
       

\end{document}